\title{{\bf Hypersurface Singularities in any Characteristic}\\
{\large In Memoriam Wolfgang Ebeling}}
\author{ \bf{Gert-Martin Greuel}}

\date{}

\documentclass[11pt]{article}
\usepackage{amsmath, amsthm, amssymb, mathrsfs, amscd, amsthm, wasysym, amscd,amsfonts,graphicx, mathtools}
\usepackage{indentfirst,url,xypic, xcolor}
\usepackage{lipsum} 
\usepackage[all]{xy}
\usepackage{url}
\usepackage[colorlinks,plainpages]{hyperref}
\usepackage{verbatim}
\usepackage{fancyvrb}
\usepackage{listings}

\hypersetup{
	colorlinks=true,
	linkcolor=blue,
	filecolor=magenta,      
	urlcolor=cyan,
	citecolor=blue
}

\DeclareMathOperator{\rank}{rank}

\DeclareMathOperator{\id}{id}

\DeclareMathOperator{\supp}{Supp}

\DeclareMathOperator{\modular}{mod}
\DeclareMathOperator{\ord}{ord}

\DeclareMathOperator{\Aut}{Aut}
\DeclareMathOperator{\jet}{jet}

\DeclareMathOperator{\rsim}{\stackrel{r}{\sim}}
\DeclareMathOperator{\csim}{\stackrel{c}{\sim}}

\DeclareMathOperator{\Spec}{Spec}
\DeclareMathOperator{\Max}{Max}

\DeclareMathOperator{\Char}{char}

\def\x{\langle x \rangle}

\newtheorem{Definition}{Definition}[section]
\newtheorem{Theorem}[Definition]{Theorem}
\newtheorem{Remark}[Definition]{Remark}
\newtheorem{Proposition}[Definition]{Proposition }
\newtheorem{Corollary}[Definition]{Corollary}
\newtheorem{Example}[Definition]{Example}
\newtheorem{Lemma}[Definition]{Lemma}
\newtheorem{conjecture}[Definition]{Conjecture}

\newcommand{\N}{{\mathbb N}}

\newcommand{\Z}{{\mathbb Z}}
\newcommand{\R}{{\mathbb R}}
\newcommand{\C}{{\mathbb C}}
\newcommand{\Q}{{\mathbb Q}}
\renewcommand{\P}{{\mathbb P}}

\newcommand{\kk}{{\mathcal K}}

\newcommand{\kq}{{\mathcal Q}}
\newcommand{\kr}{{\mathcal R}}

\newcommand{\fm}{\mathfrak{m}}

\newcommand{\fp}{\mathfrak{p}}
\newcommand{\fq}{\mathfrak{q}}

\newcommand{\bx}{\boldsymbol{x}}

\newcommand{\beps}{\boldsymbol{\varepsilon}}
\newcommand{\eps}{\varepsilon}

\DeclareMathOperator{\mt}{mt}

\def\<bx{\langle \bx \rangle}

\begin{document}
\maketitle

\section*{Abstract}
 {We give an overview of the fundamental definitions and results concerning hypersurface singularities, defined by convergent power series over an arbitrary real valued field. This approach combines, on the one hand, the classical case of analytic power series over the complex numbers with formal power series over arbitrary fields, but on the other hand, it goes significantly beyond that.  
Besides  general definitions and  basic results, we report on the classification of contact simple and right simple singularities in positive characteristic. 
Some of the results are new in this general setting, for which we provide complete proofs.}

\tableofcontents

\section*{Introduction}
While the theory of singularities of complex analytic spaces and germs  is covered in several textbooks, the study of singularities of algebraic varieties and formal power series defined over a field of arbitrary characteristic has not yet been fully explored. 
This article provides an overview of the fundamental definitions and results concerning hypersurface singularities of any characteristic.

Many of the topics discussed here have counterparts over the complex numbers. But in positive characteristics, the results often differ significantly and require a different proof in each case. This overview is based on Chapter 3.1 of the second edition of the textbook "Introduction to Singularities and Deformations" (see  \cite{GLS25}) as well as on the two articles \cite{GP25} and \cite{GP25a}.

While \cite{GLS25} considers formal power series, here we deal more generally with convergent power series over a real-valued field $K$ of arbitrary characteristic. The main new ingredient we use is Grauert's approximation theorem and the existence of a semiuniversal deformation for isolated singularities $f \in K\{x\}$, the ring of convergent power series over $K$ (from \cite{GP25a}).
One advantage is that we do not need to distinguish between formal power series over arbitrary fields and the classical cases of convergent power series over $\R$ and $\C$. But this approach covers also many other interesting fields (see Remark \ref{rm.rvf}). Moreover, several results of this article are new for $K\{x\}$.\\

Here is a short overview. In Section \ref{sec.invariants} we generalize the classical definitions and results of complex analytic hypersurface singularities to power series $f\in K\{x\}$. This includes Milnor and Tjurina number, finite determinacy bounds (using Grauert's approximation theorem) and, finally, a generalization of the Mather-Yau theorem. Several results are new in this setting for which we give complete proofs.

Section \ref{sec.split} treats versal unfoldings and deformations and the important splitting lemma in any characteristic for $K\{x\}$, with full proofs.

In Section \ref{sec.class1} resp. Section \ref{sec.class2}  we report on the classification of contact resp. right simple singularities in positive characteristic. While $K$ may be an arbitrary real valued field in the previous sections, we assume for the classification that $K$ is algebraically closed.\\

We first fix some notations.  $K$ will always denote a field of characteristic $p \ge 0$ with a real valuation $| \, \ | :K \to \R_{\ge 0}$\,\footnote{This means: $ |a| = 0 \Leftrightarrow a=0$, $ |a||b| = |ab|$, and $|a+b| \le |a|+|b|$. $K$ is also called a normed field.}, except said otherwise. 
$K[[x]]$, $x=(x_1,\ldots,x_n)$, denotes the formal power series ring.
For each \mbox{${\eps}\in
  (\R_{>0})^n$}, we
define a map 
$\|\phantom{f}\|_{\eps}:K[[x]]\to \R_{>0}\cup  \{\infty\}$
   by setting for a formal power series
   \mbox{$f  =  \sum_{{\alpha}\in\N^n}c_{\alpha} {x}^{\alpha} $}, $c_\alpha \in K$, 
   $\alpha=(\alpha_1,...,\alpha_n)$, 
   $x^\alpha=x_1^{\alpha_1}\cdot...\cdot x_n^{\alpha_n}$,
$$\|f\|_{\eps}:= \sum_{{\alpha} \in \N^n} |c_{\alpha}|\cdot
{\eps}^{{\alpha}}\quad \in \R_{\ge 0}\cup \{\infty\}\,.$$
The power series
\mbox{$f  =  \sum_{{\alpha}\in\N^n}c_{\alpha} {x}^{\alpha} $}
is called {\em convergent}
iff there exists a real vector \mbox{$\eps\in(\R_{>0})^n$}
such that \mbox{$ \| f \|_{\eps} < \infty $}.
We denote by 
$$K\{x\}=K\{x_1,...,x_n\} \subseteq K[[x]]$$
the {\em convergent (or analytic) power series ring} over $K$. The analytic $K$-algebra 
$K\{ x\}$ is a Noetherian, integral and factorial local ring  with maximal ideal $\fm =  \langle x_1,...,x_n \rangle$, and morphisms being morphisms of local $K$-algebras (see \cite[Section I.1.2]{GLS25}). 
We denote  the {\em order} of $f \in K\{x\}$ 
by 
$$\ord(f) := \max \{l \mid f \in \fm^l\}\text{ if }f \ne 0\text{ and }\ord(0)=\infty.$$ 

 If  $K$  is complete w.r.t. $| \,\ |$ (every Cauchy sequence with respect to $| \,\ |$ converges in $K$) then $K$ is called a {\em complete real valued field}.
Note that $\bar K\{x\}\cap K\{x\}=K\{x\}$, where $\bar K$ denotes the completion of $K$. See Remark \ref{rm.rvf} for examples of real valued fields.

For a geometric interpretation of the results of this paper we mention, that if the valuation $| \,\ |$ on $K$ is  complete, then any convergent power series 
 $f= \sum c_\alpha x^\alpha \in K\{x\}$, $\|f\|_{\eps}< \infty$, defines a continuous (even analytic) function $f: P^n_\eps \to K, \ f(a) = \sum c_\alpha a^\alpha$, with $P^n_\eps$ the policylinder
 $$P^n_\eps: =
\{a=(a_1,...,a_n) \in K^n \mid |a_i| < \eps_i, i =1,...,n\}.$$
 {However, not every function $f$ that is analytic in $P^n_\eps$ satisfies $\|f\|_{\beps}< \infty$. E.g.  $f = \frac{1}{1-x} = \sum_0^\infty x^\nu$ is analytic in $P^1_1$ (for $K=\C$) but $\|f\|_{1}= \infty$
(see \cite[§1.1, Bemerkung]{GR71}). }\\

We extend $\| \,\ \|_{\eps}$ to $K[[x]]^N$ by setting 
$$\|f\|_{\eps}:= \sum_{i=1}^N\|f_i\|_{\eps}$$
for $f=(f_1,\ldots,f_N)\in K[[x ]]^N=\sum_{i=1}^N K[[x]]e_i$.
Let 
$$B_{\eps}^N:=\{f\in K[[x]]^N \text{  } | \text{ } \|f\|_{\eps}<\infty\}.$$
Then $\| \ \|_{\eps}$ 
satisfies  $ \|f+g\|_{\eps}\le  \|f\|_{\eps}+ \|g\|_{\eps}$ and for 
$f\in K[[x]]$ and $g \in K[[x]]^N$ we have
$ \|fg\|_{\eps}\le  \|f\|_{\eps} \|g\|_{\eps}$.
 If $\rho=(\rho_1,\ldots,\rho_n)\leq\eps=(\eps_1,\ldots,\eps_n)$ then\,\footnote{$\rho\leq\eps$ means $\rho_i\leq\eps_i$ for all $i$.} $B_\eps^N\subseteq B_\rho^N$
and we get
$K\{x\}^N=\bigcup_{\eps\in\R^n_{>0}} B_{\eps}^N$. 

 \begin{Remark}\label{rm.rvf}{\em
\begin{enumerate}
\item [(1)] We allow the trivial valuation  ($|0| =0, |a| =1$ if $a\ne 0$) and then $K\{x\}=K[[x]]$. Hence, any result in this article formulated for $K\{x\}$ holds for  $K[[x]]$, the case which was treated in \cite[Chapter 3.1]{GLS25}.
Finite fields permit only the trivial valuation. The valuation may be non-Archimedian, satisfying the stronger triangel inequality  $|x+y|\le \max(|x|,|y|)$.
\item  [(2)] Examples of a complete real valued field are $\C$  
resp. $\R$ with the usual absolute value as valuation.  {In fact, every field with a non-trivial Archimedean valuation is isometrically isomorphic to a subfield of either $\R$ or $\C$ with the usual valuation ("Ostrowski's 1st Theorem" \cite{Os16}).}

The usual absolute value on the field of rational numbers makes $\Q$ a real valued field, which is not complete (but it is quasi-complete\,\footnote{$K$ is called {\em quasi-complete} if the completion $\bar K$  of $K$ is a separable field extension of $K$. In characteristic 0 every real valued field is already quasi-complete.} since its characteristic is 0).
\item  [(3)] Another example is given by the field of $p$-adic numbers.
Let \mbox{$p$} be a prime number, then the map
$ v_p:\Z\setminus\{0\} \to \R_{>0}\,,\ a \mapsto p^{-m}, \text{ with }
m:=\max\{k\in \N  \mid p^k \text{ divides }a\} $
extends to a unique real  {(non-Archimedian)} valuation of $\Q$. With this
valuation, $\Q$ is a real valued field that is not complete but quasi-complete.
 The completion $\Q_p= \bar \Q$ with respect to  $v_p$ is called the
field of {\em $p$-adic numbers}.
 {For a generalization see (6).\\
By "Ostrowski’s 2nd Theorem" (also in \cite{Os16}) every non-trivial absolute value
on $\Q$ is equivalent to either the usual absolute value or to $v_p$ for some prime $p$.}
\item  [(4)]  Let $F$ be a field and $K=F(T)$ the quotient field of $F[T]$. If $ f/g \in K\smallsetminus \{0\}$ then
$\ord(f/g) = \ord(f)-\ord(g) \in \Z$.  {We set
$|f/g| = e^{-\ord(f/g)}$ (any $c>1$ instead of $e$ could be used) and $|0|=0$. Then $| \ |$ is a non-Archimedian real valuation on $K$, which is not complete.
The completion of $F(T)$ is  the field of {\em formal Laurent series} $K=F((T))$, which is the quotient field of $F[[T]]$, with valuation $|c| = e^{-m}$ if $c=\sum_{i=m}^\infty a_iT^i, m \in \Z, a_m\ne0$. }

\item  [(5)] Another examples of a real valued field  is the field of {\em Puiseux series}  $K=\bigcup_{n=1}^\infty F((T^{\frac{1}{n}}))$,  with valuation 
$|c| = e^{-m/n}$ 
if $c=\sum_{i=m}^\infty a_iT^{\frac{i}{n}}, a_m\ne0$.
 {If $F$ is algebraically closed and of characteristic $0$, then $K$ is the algebraic closure of $F((T))$.
$K$ is not complete, its completion is also of interest and leads to larger fields (e.g. Levi-Civita and Hahn series).}

\item  [(6)]   {$\mathbb{Q}_p$ is an example of a non‑Archimedean local field  (i.e. complete, locally compact field with a non-Archimedian nontrivial valuation whose residue field is finite). Beyond $\mathbb{Q}_p$ there are many other complete non-Archimedean fields. Non‑Archimedean local fields are central objects  of modern theories in number theory, representation
theory, and non‑archimedean geometry, with applications ranging from automorphic forms and local $L$‑functions to rigid analytic geometry and the theory
of $D$‑modules in non‑Archimedean settings.}
\end{enumerate}
}
\end{Remark}

We introduce in Section \ref{sec.invariants} the basic invariants for power series $f \in K\{x\}$,  which generalize 
the classical ones for complex analytic singularities to analytic singularities in arbitrary characteristic. Moreover, using the so called "Lefschetz principle", we show that some results known for $\C$ hold also for real valued fields of characteristic zero.
We discuss then finite determinacy  for power series in $f \in K\{x\}$ with respect to right and contact equivalence  (implying that the truncation up to some finite degree determines its equivalence class), which is indispensable for the classification of singularities. This is used in the last two sections, when we classify singularities in positive characteristic. Finally we prove the Mather-Yau theorem for $f\in K\{x\}$ with $K$ real valued.

In Section \ref{sec.split} we give a proof of the splitting lemma for power series in $K\{x\}$. The splitting lemma is of great importance for the classification of singularities. It says that a power series $f$  splits (up to a coordinate change) as the sum of a quadratic part $f^{(2)}$ plus a series $g$ of order at least three, where $f^{(2)}$ and $g$ depend on disjoint sets of variables.  
Moreover, the series $g$, the "residual part", is unique up to a  change of coordinates.
The original proof (for diﬀerentiable functions) is due to Ren\'e Thom, who used it for his classification of the 7 elementary catastrophes (see his book \cite[§ 5.2D]{Th75}). Thom's proof (for differentiable functions), especially for the uniqueness of the residual part, is fairly long and complicated, while  our proof is quite simple (but with some tricky substitutions).

The previous results are used  in Section \ref{sec.class1} resp. Section \ref{sec.class2}, which culminates in the classification of simple hypersurface singularities in positive characteristic with respect to contact equivalence resp. right equivalence. While the classification with respect to contact equivalence is similar to that in characteristic 0, the classification with respect to right equivalence is fundamentally different.\\

{\bf Acknowledgements:} I like to thank Gerhard Pfister for useful discussions on the contents of this paper. Several results are part of our joint articles \cite{GP25} and \cite{GP25a}.  {Many thanks also to the referee for the careful proofreading.}
Finally, I would also like to thank the Mathematical Institute of the University of Freiburg for its hospitality. 

 {Last but not least, I like to thank Wolfgang Ebeling for many years of fruitful mathematical discussions and long-standing friendship.}

\section{Invariants of Hypersurface Singularities}\label{sec.invariants}
We keep the notations from the Introduction, with $\fm =\x =\langle x_1,...,x_n\rangle$ denoting the maximal ideal of $K\{x\}$.
Given a power series $f\in  K\{x\}$, we  call $f$ or the analytic $K$-algebra
$K\{x\}/\langle f\rangle$
a \emph{hypersurface singularity} if $f \in \fm, f\ne 0$.

The two most important equivalence relations for power series are right equivalence and contact equivalence.

\begin{Definition}\label{def:equiv}
Let $f,g  \in K\{x\}$ and  $k$ a positive integer.
\begin{enumerate}
\item $f$ is {\em right equivalent}
 to $g  \ (f\overset{r}{\sim} g)$ iff
 there exists a $\varphi \in \Aut(K\{x\})$ such that  $f= \varphi(g)$, 
i.e. $f$ and $g$ differ by an analytic coordinate change $\varphi$ with $\varphi(x_i)=\varphi_i \in K\{x\}$ and $f=g(\varphi_1, \cdots, \varphi_n).$

\item $f$ is {\em  contact equivalent} to $g \ (f \overset{c}{\sim} g)$ iff
 there exists a  $\varphi \in \Aut (K\{x\})$ and a unit $u \in K\{x\}^\ast$ such that $f=u\cdot\varphi(g)$, 
i.e. the analytic $K$--algebras $K\{x\}/\langle f\rangle$ and $K\{x\}/\langle g\rangle$ are isomorphic.

\item We call $f$ {\em right $k$-determined} (resp. {\em contact $k$-determined}) if $f \rsim g$ (resp. $f \csim g$) for every  $g \in K\{x\}$ with $f-g \in \fm^{k+1}$ or, equivalently, if $\jet_k(f) = \jet_k(g)$. Here $\jet_k(f)$,  the image of $f$ in $K\{x\}/\fm^{k+1}$, denotes the {\em $k$-jet} of $f$, identified with the power series  of $f$ up to degree $\le k$. 

In both cases we say that $f$ is \emph{finitely determined} if it is $k$-determined for some integer $k>0$, and we call the least such $k$ the \emph{determinacy} of $f$. 
\end{enumerate}
\end{Definition}

\begin{Definition}\label{def:mu_pos}
For  $ f\in K\{x\}$ we set
\begin{enumerate}
\item 
 $$
 \begin{array} {llll}  
  j(f) & :=& \langle\frac {\partial f}{\partial x_1},...,\frac {\partial f}{\partial x_n}\rangle &\text{ \emph{the Jacobian ideal}},\\
tj(f)\rangle & :=& \langle f, \frac {\partial f}{\partial x_1},...,\frac {\partial f}{\partial x_n}\rangle &\text{ \emph{the Tjurina ideal}},
\end{array}  
$$

and call the associated algebras  {
 $$
 \begin{array} {llll}  
 M_f & :=& K\{x\}/j(f)
   &\text{ the \emph{Milnor algebra}, and  }\\
   T_f&:=&K\{x\}/\langle f, j(f) \rangle
   &\text{ the \emph{Tjurina algebra}  of } f.
\end{array}  
 $$ }
   Their dimensions are
$$
 \begin{array} {lllll}  
  \mu(f) & :=& \dim_K K\{x\}/ \langle\frac {\partial f}{\partial x_1},...,\frac {\partial f}{\partial x_n}\rangle\index{Milnor!number} &\text{ \emph{the Milnor number of }} f,\\
 \tau(f) & :=& \dim_K K\{x\}/\langle f, \frac {\partial f}{\partial x_1},...,\frac {\partial f}{\partial x_n}\rangle \index{Tjurina!number}&\text{ \emph{the Tjurina number of }} f.
 \index{$\mu$@$\mu(f)$}\index{$\tau$@$\tau(f)$}
\end{array}  
$$
\index{$mu$@$\mu(f)$}\index{$tau$@$\tau(f)$}  
\item We say that $f$ has an
   \emph{isolated singularity}\index{singularity!isolated} if $\mu(f)<\infty$ (which is 
   equivalent to the existence of a positive integer $k$ such that
   $\fm^k\subseteq j(f)$),  and we
call $K\{x\}/\langle f\rangle$  an \emph{isolated (hypersurface) singularity}\index{singularity!isolated!hypersurface} if
   $\tau(f)<\infty$ (equivalently, if there is a positive integer $k$
   such that $\fm^k\subseteq tj(f)$).  We say also that $f$ {\em defines an isolated singularity}, meaning that $\mu(f)<\infty$ if we consider right equivalence  resp.  $\tau(f)<\infty$ if we consider contact equivalence.
   \end{enumerate}
   \end{Definition}

   These definitions coincide with the complex analytic ones. It is straight forward to see
   that for an automorphism $\varphi\in\Aut(K\{x\})$ and a unit
   $u\in K\{x\}^*$ we have {
 $$
\begin{array}{llll}
    j\big(\varphi(f)\big)&=&\varphi\big(j(f)\big), \\
     tj\big(u\varphi(f)\big)&=&\varphi(tj(f)\big).
 \end{array}
 $$}
   In particular, the Milnor number is \emph{invariant} under right equivalence and
   the Tjurina number is \emph{invariant} under contact equivalence (see \cite[Lemma  1.2.10]{GLS25}). 
   
   It is a non-trivial theorem, using methods from complex analysis, which
   cannot be extended to other fields, that for $f \in \C\{x\}$  the
   Milnor number of a complex analytic hypersurface is indeed invariant under contact
   equivalence (see \cite[p.~262]{Gr75}), and it is even a topological
   invariant (see \cite{LeR}). Using the 
   {\em Lefschetz principle}\index{Lefschetz principle}  the result for contact equivalence can be 
   generalised to arbitrary  real valued fields of
   characteristic zero.  {This was first proved in \cite[Theorem 1]{BGM12} for formal power series. For convergent power series we need an extra argument, which is covered by Lemma \ref{lem.lefschetz}.}
   
 \begin{Lemma}\label{lem.lefschetz}\text{  }
\begin{enumerate} 
\item [(1)]  Let $K$ be a real valued field and  $I\subseteq K\{x\}$ an ideal. Then   
$$\dim_K K\{x\}/I =\dim_K K[[x]]/I K[[x]].$$ 
In particular,
$\dim_K K\{x\}/I< \infty \iff \dim_K K[[x]]/I K[[x]]< \infty $ and if this holds, then $K\{x\}/I = K[[x]]/I K[[x]]$.
\item [(2)]  Let $K\subseteq L$ be a field extension of arbitrary fields and $I\subseteq K[[x]]$ an ideal. Then 
$$\dim_K K[[x]]/I=\dim_{L} L[[x]]/IL[[x]].$$ 
In particular
 $\dim_K K[[x]]/I < \infty \iff \dim_L L[[x]]/I L[[x]]< \infty $ 
  and then we have  $(K[[x]]/I)\otimes_K L=L[[x]]/I L[[x]].$
\end{enumerate} 
If $K\subseteq L$ are real valued fields (with not necessarily compatible valuations) and $I\subseteq K\{x\}$. Then we deduce from (1) and (2) that 
$\dim_K K\{x\}/I=\dim_{L} L\{x\}/IL\{x\}.$ 
\end{Lemma}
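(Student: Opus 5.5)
For (1), I will compare both quotients with their $\fm$-adic truncations. The key facts are that $K\{x\}$ is Noetherian local with completion $K[[x]]$ (with respect to the $\fm$-adic topology) and that $K\{x\}/\fm^k = K[[x]]/\fm^k K[[x]]$ for every $k$. Hence for every $k$
$$K\{x\}/(I+\fm^k) \cong K[[x]]/(IK[[x]]+\fm^k K[[x]]).$$

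Suppose first that $\dim_K K\{x\}/I<\infty$. Then the $\fm$-primary ideal $I$ contains some $\fm^k$ (Nakayama/Krull, as in Definition \ref{def:mu_pos}). Consequently $IK[[x]]\supseteq \fm^kK[[x]]$, and both quotients equal the common truncation above. Conversely, suppose $\dim_K K[[x]]/IK[[x]]<\infty$. Then $\fm^kK[[x]]\subseteq IK[[x]]$ for some $k$. Contracting to $K\{x\}$ and using faithful flatness of the completion (so $IK[[x]]\cap K\{x\}=I$) gives $\fm^k\subseteq I$, and we are back in the first case. If both dimensions are infinite, the equality $\infty=\infty$ holds trivially. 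This argument also yields the identification $K\{x\}/I=K[[x]]/IK[[x]]$ in the finite case.

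I expect the only delicate point to be the use of flatness/completion for $K\{x\}$, i.e.\ that $K[[x]]$ is the $\fm$-adic completion of the Noetherian local ring $K\{x\}$. I will take this from \cite[Section I.1.2]{GLS25}. An alternative avoiding flatness is to argue directly with the descending chain $\dim K\{x\}/(I+\fm^k)$: by Krull's intersection theorem it stabilizes if and only if $\fm^k\subseteq I$ for large $k$, and the same holds for $IK[[x]]$.

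For (2) I will argue in the same way with $L[[x]]$. I use that $K[[x]]/\fm^k\otimes_K L = L[[x]]/\fm^k L[[x]]$ (polynomials of bounded degree) and that ideal generators pass to the extension. This gives
$$\dim_K K[[x]]/(I+\fm^k)=\dim_L L[[x]]/(IL[[x]]+\fm^kL[[x]])$$
for all $k$, since the quotient on the right is the base change of the left-hand finite-dimensional quotient. If $I\supseteq\fm^k$, then $IL[[x]]\supseteq\fm^kL[[x]]$ and the claim follows, together with $(K[[x]]/I)\otimes_K L=L[[x]]/IL[[x]]$. For the converse, assume $\fm^kL[[x]]\subseteq IL[[x]]$. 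Then the dimensions $\dim_L L[[x]]/(IL[[x]]+\fm^j)$ stabilize for $j\ge k$, so by the displayed equality $\dim_K K[[x]]/(I+\fm^j)$ stabilizes too. Stabilization means $\fm^j\subseteq I+\fm^{j+1}$, and Nakayama then gives $\fm^j\subseteq I$. If both sides are infinite, the dimensions grow unboundedly and equality again holds.

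The final statement follows by chaining the equalities: $\dim_K K\{x\}/I=\dim_K K[[x]]/IK[[x]]$ by (1) over $K$; this equals $\dim_L L[[x]]/IL[[x]]$ by (2); and that equals $\dim_L L\{x\}/IL\{x\}$ by (1) over $L$. The last step uses that $IL\{x\}L[[x]]=IL[[x]]$, which holds because $K\{x\}\subseteq L\{x\}$ as a ring of power series, regardless of how the valuations compare. The only thing to check is this inclusion: a convergent series over $K$ may fail to converge for an incompatible valuation on $L$. If it fails, I will instead argue with the polynomial truncations only, which are valid in any of the rings, and apply the stabilization criterion directly.
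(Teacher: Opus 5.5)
Your proposal is correct and is essentially the paper's proof. In (1), your contraction $IK[[x]]\cap K\{x\}=I$ is exactly the injectivity $A=K\{x\}/I\subseteq \hat A=K[[x]]/IK[[x]]$ used there. In (2), you use the same base change $(K[[x]]/(I+\fm^k))\otimes_K L\cong L[[x]]/(IL[[x]]+\fm^k L[[x]])$ of truncations, followed by the same stabilization-plus-Nakayama argument, and you get the last claim by the same chaining of (1), (2), (1).

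Your doubt about that last step is justified, but the paper passes over it too. Your sentence asserting $K\{x\}\subseteq L\{x\}$ regardless of the valuations is false: $\sum_n p^{n^2}x^n$ lies in $K\{x\}$ for $K=\Q$ with the $p$-adic valuation, but it does not converge in $\R$. So the final claim only makes sense when $I\subseteq L\{x\}$, for instance when $I\supseteq\fm^k$ and is hence generated by polynomials; under that reading your chain is complete.
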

   
   \begin{proof}
(1) We have $A:=K\{x\}/I \subseteq \hat A =   K[[x]]/I K[[x]] = \lim\limits_{\longleftarrow} K\{x\}/I+\x^m$,  and hence 
$\dim_K \hat A  < \infty \Longrightarrow \dim_K A < \infty$.
Conversely, if $\dim_K A < \infty$ then $\x^m \subseteq I$ for  $m \ge$ some $m_0$. This implies $\hat A =A$ and the claim follows.

(2) Denote by $\x$ and $\bar \x$,  the ideals generated by $x_1,...,x_n$ in $K[[x]]$ and  $K[x]$ respectively.

For $f \in K[[x]]$ let $\jet_l(f) \in K[x]$ be the $l$-jet of $f$. Then
$f = \jet_{l-1}(f) +h$, with $h \in \x^l K[[x]]$. We set 
$$\jet_l(I):=  \langle\jet_l(f) \mid f \in I \rangle_ {K[x]} \subseteq K[x].$$
It is easy to see that $I + \x^l =\jet_{l-1}(I)K[[x]] + \x^l$. Using this we get
 \begin{align*}
K[[x]]/I+\x^m = & (K[[x]]/\x^m)/(I+\x^m)/\x^m \\
    =& (K[x]/\bar \x^m)/(\jet_{m-1}(I)+\bar\x^m)/\bar\x^m\\
    =& K[x]/ \jet_{m-1}(I) + \bar \x^m.
\end{align*}
Analogous formulas hold for $L[[x]]$ instead of $K[[x]]$. It follows that 
 \begin{align*}
(K[[x]]/I+\x^m)\otimes_K L
  =&(K[x]/ \jet_{m-1}(I) + \bar \x^m)\otimes_K L\\
 =& L[x]/ \jet_{m-1}(I)L[x] +  \x^m L[x]\\
  = & L[[x]]/ IL[[x]] +  \x^m L[[x]]. 
\end{align*}
If  $\dim_K K[[x]]/I <\infty$, then $\x^m \subseteq I$ for some $m$ and hence $\x^m L[[x]] \subseteq I L[[x]]$, showing 
$(K[[x]]/I)\otimes_K L =  L[[x]]/ IL[[x]]$ and
$\dim_L L[[x]]/I L[[x]] = \dim_K K[[x]]/I < \infty $.

If  $\dim_L L[[x]]/IL[[x]] <\infty$ then $\x^l L[[x]] \subseteq IL[[x]]$ for all
$l\ge m$ and it follows that $\dim_L L[[x]]/IL[[x]] = \dim_L (K[[x]]/I+\x^l)\otimes_K L = \dim_K(K[[x]]/I+\x^l)$ for $l\ge m$.
With $A:= K[[x]]/I$ and $\fm = I+\x/I$ its maximal ideal, it follows that
the surjection
$A/\fm^ {m+1} \to A/\fm^ {m}$ is an isomorphism and hence $\fm^ {m+1} = \fm^ {m}$. By Nakayama's lemma $\fm^ {m} =0 $, i.e., 
$\x^m \subseteq I$, showing  $\dim_K K[[x]]/I <\infty$.
 \end{proof}

  \begin{Proposition}\label{prop:lefschetz1}
     Let $K$ be a real valued field of characteristic zero
     and $f,g\in K\{x\}$. 
     If $ f \overset{c}{\sim}\ g$, then $\mu(f)=\mu(g)$.
   \end{Proposition}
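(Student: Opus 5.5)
We reduce the statement to the complex analytic case, where the contact invariance of $\mu$ is known \cite[p.~262]{Gr75}, by a Lefschetz principle argument. The transfer between fields is handled by Lemma \ref{lem.lefschetz}. Write $f \overset{c}{\sim} g$ as $f = u\cdot\varphi(g)$ with $u\in K\{x\}^*$ and $\varphi\in\Aut(K\{x\})$. Since $\mu$ is a right invariant, we may replace $g$ by $\varphi(g)$. So it suffices to show $\mu(ug)=\mu(g)$ for a unit $u$.

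\textbf{Step 1: pass to formal power series.} By Lemma \ref{lem.lefschetz}(1), applied to the ideals $j(g)$ and $j(ug)$, we have $\mu(g)=\dim_K K[[x]]/j(g)K[[x]]$, and similarly for $ug$. This holds whether or not these numbers are finite. Partial derivatives commute with the inclusion $K\{x\}\subseteq K[[x]]$, so $j(g)K[[x]]$ is the Jacobian ideal of $g$ viewed as a formal power series. Hence it suffices to prove $\mu(ug)=\mu(g)$ for $g,u\in K[[x]]$ and $u$ a unit.

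\textbf{Step 2: reduce to a countable field.} Suppose first that $\mu(g)<\infty$. Then $\fm^k\subseteq j(g)$ for some $k$, and $g$ is finitely right determined; the classical bound $\mu+1$ holds in characteristic $0$, or Step 3 can carry the determinacy along. Replacing $g$ by a jet $\jet_N(g)$ and $u$ by $\jet_N(u)$ changes neither $\mu(g)$ nor $\mu(ug)$ for $N$ large. To see this for $ug$, note that $\jet_N(u)\jet_N(g)$ differs from $ug$ by an element of $\fm^{N+1}$. It is also contact equivalent to $\jet_N(u)g$, because $g$ is $N$-determined and $\jet_N(u)$ is a unit.

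I would handle this more cleanly by treating both possibly infinite $\mu$'s simultaneously, using the Nakayama argument of Lemma \ref{lem.lefschetz}(2). The quantity $\dim K[[x]]/(j+\fm^m)$ depends only on finite jets of the data. So it suffices to compare $\dim K[[x]]/(j(ug)+\fm^m)$ with $\dim K[[x]]/(j(g)+\fm^m)$ for each $m$. These stabilize to $\mu$ if $\mu$ is finite, and are unbounded otherwise.

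With this in mind, let $K_0=\Q(\text{coefficients of } g,u)$. I expect some care is needed here, since there are infinitely many coefficients. If $K_0$ has uncountable transcendence degree, I work instead with the finitely generated field of the coefficients of the jets $\jet_N(g),\jet_N(u)$. That field embeds into $\C$.

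\textbf{Step 3: apply the complex result.} Let $L$ be a finitely generated field of characteristic $0$ containing the relevant coefficients. Choose an embedding $L\hookrightarrow\C$, which exists since $\C$ is algebraically closed of infinite transcendence degree. By Lemma \ref{lem.lefschetz}(2), dimensions of quotients by ideals are preserved under $L\subseteq K$ and $L\subseteq\C$. The truncated data are polynomials, hence convergent over $\C$. By Lemma \ref{lem.lefschetz}(1), the complex Milnor numbers of $g$ and $ug$ agree with the formal ones, and Gr\"obner/Greuel's theorem gives $\mu_\C(ug)=\mu_\C(g)$. Transferring back yields the claim over $K$.

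\textbf{Main obstacle.} The main obstacle is the bookkeeping in Step 2: the truncation must not change either Milnor number, including the case $\mu=\infty$. My plan is to use finite determinacy of $g$ when $\mu(g)<\infty$, and by symmetry of $ug$ when $\mu(ug)<\infty$. Determinacy is with respect to $\fm^2 j$, which is contained in $\fm^{k+2}$. If both are infinite there is nothing to prove. If exactly one is finite, the determinacy argument applied to that one gives equality, which is a contradiction.
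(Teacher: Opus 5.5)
Your overall plan (reduce to $\mu(ug)=\mu(g)$ for a unit $u$, transfer dimensions with Lemma~\ref{lem.lefschetz}, apply Greuel's theorem over $\C$) is the paper's. However, the step you yourself flag as the main obstacle is not actually proved.

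\textbf{The gap.} In Step~2 you justify $\mu(ug)=\mu(\jet_N(u)\jet_N(g))$ partly by noting that $\jet_N(u)\jet_N(g)$ is contact equivalent to $\jet_N(u)g$. Transferring $\mu$ along a contact equivalence is exactly the statement being proved, so this is circular. Your other observation, $ug-\jet_N(u)\jet_N(g)\in\fm^{N+1}$, gives a right equivalence only if one of the two series is already known to be right $N$-determined, i.e.\ to have finite Milnor number with determinacy bound at most $N$. When both Milnor numbers are known to be finite this is fine: take $N$ beyond both bounds.

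It fails precisely in the case you dismiss at the end, where $\mu(g)<\infty$ and nothing is known about $\mu(ug)$. Determinacy of $g$ only lets you replace $g$ by a polynomial. After that, $ug$ becomes $v\cdot\jet_N(g)$ with a new non-polynomial unit $v$, and truncating $v$ is the same problem again. The truncated-dimension variant also stops at ``it suffices to compare'' without saying how the comparison is made. A minor slip as well: the determinacy hypothesis is $\fm^{k+2}\subseteq\fm^2 j(f)$, not the reverse inclusion.

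\textbf{The missing idea.} Apply determinacy to the \emph{truncated product}, whose Milnor number you can compute over $\C$, and observe that the resulting bound does not depend on $N$.
Let $\mu(g)<\infty$ and $N\ge 2\mu(g)-\ord(g)+2$, and put $g_N=\jet_N(g)$, $u_N=\jet_N(u)$. Then:
\begin{enumerate}
\item $g\overset{r}{\sim}g_N$ by Corollary~\ref{cor:finitedeterminacybound}.
\item The finitely generated field of coefficients of $u_N,g_N$ embeds into $\C$. Lemma~\ref{lem.lefschetz} together with \cite{Gr75} gives $\mu(u_Ng_N)=\mu(g_N)=\mu(g)$.
\item Since $u_Ng_N$ has the same order and the same Milnor number as $g$, it is itself right $N$-determined.
\item From $ug\equiv u_Ng_N \bmod \fm^{N+1}$ you get $ug\overset{r}{\sim}u_Ng_N$, hence $\mu(ug)=\mu(g)$.
\end{enumerate}
Applying this to $g=u^{-1}\cdot(ug)$ covers the case $\mu(ug)<\infty$, and if both are infinite there is nothing to show.

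Equivalently, for $M\ge m$ one has $\dim_K K[[x]]/(j(ug)+\fm^m)=\dim_K K[[x]]/(j(u_Mg_M)+\fm^m)\le\mu(u_Mg_M)=\mu(g_M)$. This bounds $\mu(ug)$ by $\mu(g)$, and symmetry gives equality.

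\textbf{Comparison with the paper.} With this repair your route refines the paper's. The paper embeds the countably generated field $\Q(A)$ of all coefficients directly into $\C$, so your worry about uncountable transcendence degree is unnecessary. But under such an abstract embedding the images of $f$ and $u$ need not converge for the usual absolute value on $\C$ (think of a $p$-adic valuation). Writing $\C\{x\}/j(f)$ therefore really needs a reduction to polynomials like the one you perform.
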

   
   \begin{proof}
     Since the Milnor number is invariant under right equivalence, it
     suffices to show that $\mu(f)=\mu(u\cdot f)$ for any unit
     $u\in K\{x\}^*$. If $A$ denotes the subset of $K$
     containing the coefficients of $u$, $f$ and all partial
     derivatives $\frac{\partial f}{\partial x_i}$ of $f$, then $A$ is at most countable  infinite and $\{f, u, \frac{\partial f}{\partial x_i} \} \subseteq \Q(A)\{x\}$.
     Since $\Char(K)=0$ and since $\Q\subset\C$ is a field
     extension of uncountable transcendence degree 
     the field $\Q(A)$ is isomorphic to a subfield ${L}$
     of the field $\C$ of complex numbers. The valuation on $L$, which is inherited from $\Q(A)$,  may however not be the usual valuation on $\C$. 
 {Nevertheless, by  Lemma \ref{lem.lefschetz}  we have
$\dim_{L}({L}\{x\}/j(f))=\dim_\C(\C\{x\}/j(f))$ and
$\dim_{L}({L}\{x\}/j(uf))=\dim_\C(\C\{x\}/j(uf))$. Since the right hand sides coincide by  \cite{Gr75}, the proposition follows.}
 \end{proof}

  In positive characteristic this result does not hold;
   e.g.\ if $\Char(K)=p>0$ and $f=x^p+y^{p-1}$, then $\mu(f)=\infty$
   while the contact equivalent series $g=(1+x)\cdot f$ has Milnor
   number $\mu(g)=p\cdot (p-2)$.\\

In complex singularity theory  the Milnor number of a power series is finite if and only if the Tjurina
   number is so (see \cite[Lemma 1.2.3]{GLS25}). This fact can also be
   generalised to arbitrary fields of \emph{characteristic zero}.

   \begin{Theorem}\label{thm:lefschetz2}
     Let $ K$ be a field of characteristic zero
     and $f\in K\{x\}$.
     Then $\mu(f)<\infty$ if and only if $\tau(f)<\infty$.    
   \end{Theorem}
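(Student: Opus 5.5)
The direction $\mu(f)<\infty \Rightarrow \tau(f)<\infty$ is trivial in any characteristic, since $j(f)\subseteq tj(f)$ gives $\tau(f)\le\mu(f)$. For the converse I would copy the Lefschetz-principle argument from Proposition~\ref{prop:lefschetz1}, reducing to the complex case where $\mu<\infty \iff \tau<\infty$ is known by \cite[Lemma 1.2.3]{GLS25}.

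First, by Lemma~\ref{lem.lefschetz}(1), both finiteness conditions can be tested in $K[[x]]$. Indeed, $\mu(f)<\infty$ iff $\dim_K K[[x]]/j(f)K[[x]]<\infty$, and likewise for $tj(f)$. So I may treat $f$ as a formal power series, and the valuation plays no further role. This also covers $K$ with the trivial valuation, and a field of characteristic zero without a specified valuation.

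Next, let $A\subseteq K$ be the countable set of coefficients of $f$. Then $f\in\Q(A)[[x]]$, and the partials $\partial f/\partial x_i$ also have coefficients in $\Q(A)$, so $j(f)$ and $tj(f)$ are generated by elements of $\Q(A)[[x]]$. Since $\Char K=0$ and $\C$ has uncountable transcendence degree over $\Q$, there is a field embedding $\Q(A)\hookrightarrow\C$. Apply Lemma~\ref{lem.lefschetz}(2) twice: to the extension $\Q(A)\subseteq K$ and to the extension $\Q(A)\subseteq\C$, each time for the ideals $I=j(f)$ and $I=tj(f)$ of $\Q(A)[[x]]$. Differentiation commutes with base change, so the extended ideals are exactly the Jacobian and Tjurina ideals of $f$ viewed over $K$ or over $\C$. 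Hence $\mu$ and $\tau$ of $f$ are the same whether computed over $K$, over $\Q(A)$, or over $\C$, at least as regards finiteness.

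It remains to handle $\C$. Suppose $\tau(f)<\infty$ over $\C$. If $f$ is not convergent over $\C$, I cannot quote the analytic statement directly. So I would use finite determinacy of the formal ideal: $\fm^k\subseteq tj(f)$ for some $k$, and $tj(f)$ then contains $\fm^k$. Replace $f$ by its polynomial jet $g=\jet_N(f)$ with $N\gg k$. Then $tj(g)+\fm^{N}=tj(f)+\fm^N$, and by a Nakayama argument as in the proof of Lemma~\ref{lem.lefschetz}(2), $tj(g)\supseteq\fm^k$ as well, and similarly for the Jacobian ideal with a suitable $N$. The polynomial $g$ is convergent, so \cite[Lemma 1.2.3]{GLS25} gives $\mu(g)<\infty$.

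The main obstacle is transferring $\mu(g)<\infty$ back to $\mu(f)<\infty$, because the Jacobian condition is not yet known to stabilize. An alternative that avoids this is to argue algebraically via the complex proof itself. If $\mu=\infty$, then $j(f)$ defines a curve germ in the critical locus. Along it $f$ is constant (by the curve selection / derivation argument: $\frac{d}{dt}f(\gamma(t))=\sum \partial_i f\,\gamma_i'=0$, so $f\circ\gamma$ is constant, hence zero in characteristic $0$). So $f\in\sqrt{j(f)}$ on that branch, and the curve lies in $V(tj(f))$, contradicting $\tau<\infty$. This works over $\C[[x]]$ by taking a formal arc $\gamma\in\C[[t]]^n$ in $V(j(f))$, which exists when $\dim \C[[x]]/j(f)\ge1$. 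I would use this arc argument as the core step, since it is cleanest, and it even works directly over an algebraically closed $\bar K$ via Lemma~\ref{lem.lefschetz}(2).
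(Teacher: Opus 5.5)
Your argument is correct, but its core is a genuinely different proof from the paper's. The paper embeds $\Q(A)$ into $\C$, transfers $\mu$ and $\tau$ with Lemma~\ref{lem.lefschetz}, and quotes the complex result \cite[Lemma 1.2.3]{GLS25}. You correctly observe that $f$ need not converge for the usual absolute value on $\C$, so that step really needs the formal version over $\C$, which the paper does not address.

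Your arc argument sidesteps this, and it makes both the embedding into $\C$ and the jet detour unnecessary (you can delete the jet detour). After Lemma~\ref{lem.lefschetz}(1),(2), work in $L[[x]]$, where $L$ is an algebraic closure of $K$. Suppose $\mu(f)=\infty$, and pick a prime $P\supseteq j(f)$ with $\dim L[[x]]/P=1$. Let $\gamma^*:L[[x]]\to L[[t]]$ be the composite of $L[[x]]\to L[[x]]/P$ with the inclusion into the normalization of $L[[x]]/P$. That normalization is a complete DVR with residue field $L$, hence $\cong L[[t]]$, and $\ker\gamma^*=P$. This justifies the existence of the formal arc, which you only asserted. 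The chain rule gives $\frac{d}{dt}\gamma^*(f)=\sum_i\gamma^*(\partial f/\partial x_i)\,\gamma_i'=0$, so in characteristic $0$ the series $\gamma^*(f)$ is constant, equal to $f(0)$.

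Note that it is $f(0)=0$, not characteristic $0$, that makes this constant vanish. The hypothesis $f\in\fm$ is tacit in the statement and is genuinely needed: for $f=1$ one has $\tau=0$ but $\mu=\infty$. With it, $tj(f)\subseteq P$, so $\tau(f)=\infty$.

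What your route buys is a short, self-contained algebraic proof over any field of characteristic $0$. It also pinpoints where characteristic $0$ is used: for $f=x^p+y^{p-1}$ with $p\ge 3$, the arc $(t,0)$ lies in $V(j(f))$, while $\gamma^*(f)=t^p$ has zero derivative without being constant. The paper's route buys brevity by delegating everything to a transcendental theorem, which fits the Lefschetz-principle theme of that section.
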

   \begin{proof}
     Let $A$ be the set of
     coefficients of $f$ and all its partial derivatives. We proceed as in the     proof of Proposition \ref{prop:lefschetz1}: Then the field $\Q(A)$ is     isomorphic to a subfield ${L}$ of $\C$
     and we get 
     \begin{displaymath}
       \mu(f)=\dim_{L}({L}\{x\}/j(f))=\dim_\C(\C\{x\}/j(f)),
     \end{displaymath}
     \begin{displaymath}
      \tau(f)=\dim_{L}(L\{x\}/tj(f)) =\dim_\C(\C\{x\}/tj(f)).
     \end{displaymath}
     Using the result for $\C\{x\}$,  we get that $\tau(f)$ is finite if and only if $\mu(f)$ is finite.     
 \end{proof}

   For fields of positive characteristic this is false. For $f=x^p+y^{p-1}$ we have
   $\tau(f)=p\cdot (p-2)$ while $\mu(f)=\infty$. \\
     
We discuss now finite determinacy of hypersurfaces w.r.t. right and contact equivalence. 
For the classification of  power series with
   respect to right respectively 
   contact equivalence a first important step, 
   from a theoretical point of view as well as from a practical one, is
   to know that the equivalence class is determined by a finite number
   of terms of the power series $f$ and to find the (smallest) corresponding degree bound (see also \cite[ Sections 1.2.2 and 3.1.2]{GLS25}).  
 {In characteristic 0 or big characteristic we have the following:
\begin{Theorem}  \label{rem:det-char0} 
Let $K$ be a real valued field of characterisit 0 or of positive characteristic $p \ge k +2 -\ord(f)$ for some integer $k$.
For $f\in \fm \subseteq K\{x\}$,  $f$ is finitely determined w.r.t.\ right or contact equivalence if and only if $f$
defines an isolated singularity.  In fact, we have:
\begin{enumerate}
\item [(a)] If $\fm^{k+1}\subset\fm^2\cdot j(f)$, then $f$ is right $k$-determined. \\
In particular, the right determinacy is at most
   $\mu(f)+1$.
\item [(b)]  If $\fm^{k+1}\subseteq\fm\cdot\langle f\rangle+\fm^2\cdot j(f)$, then $f$ is  contact $k$-determined. \\ In particular,  the contact determinacy is at most  $\tau(f)+1$.
 \end{enumerate}
\end{Theorem}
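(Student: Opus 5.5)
The plan is the classical homotopy (Mather–Moser) method: interpolate between $f$ and $g$ linearly and solve the homotopy equation by a formal vector field. Grauert's approximation theorem (invoked in the Introduction) then passes from formal to convergent solutions. The hypothesis on $p$ enters only when integrating a derivative in the $t$-variable.

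Let $g\in K\{x\}$ with $g-f\in\fm^{k+1}$, and put $F_t=f+t(g-f)$, $t\in K$. We show that all $F_t$ are right (resp.\ contact) equivalent to $f$. First, the ideal condition is stable along the family. Since $g-f\in\fm^{k+1}$, we have $\partial_i(g-f)\in\fm^{k}$, so
\[
\fm^{k+1}\subseteq \fm^2 j(f)\subseteq \fm^2 j(F_t)+\fm^{k+2}.
\]
Nakayama's lemma, applied over the local ring $K\{x,t\}$ (or $K[t]_{\langle t-t_0\rangle}\{x\}$ near each $t_0$), gives $\fm^{k+1}\subseteq \fm^2 j(F_t)$. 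In particular
\[
g-f\in \fm^{k+1}\subseteq \fm^2\cdot j(F_t)
\]
over the local ring at each $t_0$. We can therefore write $g-f=-\sum_i \xi_i(x,t)\,\partial_i F_t$ with $\xi_i\in\fm^2$. In the contact case we instead use $\fm\langle F_t\rangle+\fm^2 j(F_t)$ and obtain an extra term $a(x,t)F_t$ with $a\in\fm$.

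Second, integrate. The equation $\partial_t F_t+\sum\xi_i\partial_iF_t\,(+aF_t)=0$ says that $F_t$ is constant (resp.\ constant up to a unit factor) along the flow of $\partial_t+\sum\xi_i\partial_i$. In characteristic $0$ the flow $\varphi_t$ is obtained from the ODE $\dot\varphi=\xi(\varphi,t)$. Since $\xi_i\in\fm^2$, it satisfies $\varphi_t(0)=0$ with $\varphi_t$ tangent to the identity, and we get $F_t\circ\varphi_t=f$.

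In positive characteristic, and already over a general valued field, there is no analytic ODE theory available. So we proceed formally and degreewise: we construct $\varphi$ by successive approximation modulo $\fm^{l}$ for increasing $l$. At each step one must divide a polynomial in $t$ of degree $d$ by integers up to $d+1$ to integrate in $t$. The degree in $t$ that actually occurs is bounded in terms of $k$ and $\ord(f)$. Only jets up to order roughly $k+2-\ord(f)$ in $t$ contribute before $\fm^{k+1}$-terms are absorbed, because each application of $\xi$ raises the $x$-order by at least $\ord(f)-1$ relative to the gradient terms. Hence $p\ge k+2-\ord(f)$ guarantees that all denominators are invertible.

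Alternatively, one can avoid the homotopy altogether by a direct iteration. Writing $g=f+h$ with $h\in\fm^{k+1}\subseteq\fm^2 j(f)$, we have $h=\sum\eta_i\partial_if$ with $\eta_i\in\fm^{2}\cdot\fm^{k-1}$, more precisely with $\ord\eta_i\ge k+2-\ord f+1$. Taylor expansion of $f(x-\eta)$ then gives
\[
f(x-\eta)=f-\sum\eta_i\partial_if+R,
\]
with $R$ of order $\ge k+2$. Here the divided-power Taylor coefficients (Hasse derivatives) avoid the factorials entirely. Iterating, the error order increases at each step, and the composite coordinate changes converge in the $\fm$-adic topology to a formal automorphism $\hat\varphi$ with $g=f\circ\hat\varphi$. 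The contact case is identical, with units $1-a$ composed multiplicatively.

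The step I expect to be the main obstacle is passing from the formal solution to a convergent one in $K\{x\}$. For this I would apply Grauert's approximation theorem (the Artin-type approximation for $K\{x\}$ mentioned in the Introduction) to the analytic system $g(x)=f(\varphi(x))$ (resp.\ $g=u\cdot f(\varphi)$). The formal solution yields a convergent solution agreeing with $\hat\varphi$ up to any prescribed order, hence still an automorphism. The "in particular" statements follow since $\dim K\{x\}/j(f)=\mu$ implies $\fm^{\mu}\subseteq j(f)$, by Nakayama applied to the descending chain $\fm^i+j(f)$. Thus $\fm^{\mu+2}\subseteq\fm^2j(f)$, giving determinacy $\le\mu+1$, and similarly $\le\tau+1$ in the contact case. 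Conversely, finite determinacy forces isolatedness: if $f$ is $k$-determined, then $f+\sum x_i^{N}$ for large $N$ is equivalent to $f$ and has finite $\mu$ (resp.\ $\tau$), using the bound on $p$ to ensure that $x_i^{N}$ has a nonzero derivative for suitable $N\not\equiv 0 \bmod p$.
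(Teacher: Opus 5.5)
\paragraph{The gap: positive characteristic.}
The positive-characteristic case is the only part of the statement beyond characteristic $0$, and here your proposal has a genuine gap. The paper does not prove this case itself; it cites \cite[Corollary 5.5]{Ker}. Neither of your two arguments uses $p\ge k+2-\ord(f)$ in a valid way.

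\emph{Homotopy version.} The claim that only $t$-degrees up to about $k+2-\ord(f)$ occur is unsupported. The $\xi_i$ produced by Nakayama have arbitrary $t$-degree. Even for $t$-independent $\xi$ the solution is the formal flow $\exp(t\xi)$: for $\xi=xy\,\partial_x$ this is $(x,y)\mapsto(x e^{ty},y)$, whose homogeneous part of degree $d$ carries the denominator $(d-1)!$. Since $\varphi$ is needed to all orders, division by $p$ is in general unavoidable. Moreover, in characteristic $p$ the identity $\partial_t\big(F_t(\varphi_t)\big)=0$ only shows that $F_t(\varphi_t)$ is a series in $t^p$, so it would not give $g(\varphi_1)=f$ anyway.

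\emph{Direct iteration.} The claim $\ord\eta_i\ge k+3-\ord(f)$ does not follow from $h\in\fm^{k+1}\subseteq\fm^2 j(f)$, which only gives $\eta_i\in\fm^2$. For $f=x^2+y^5$ with $\Char K\ne 2,5$ one has $k=5$, yet every representation $y^6=a\,\partial_xf+b\,\partial_yf$ has $b(0,y)=y^2/5$. The honest estimate is $\eta_i\in\fm^{e-k+2}$ when the error lies in $\fm^{e+1}$. The Taylor remainder then has order $\ge\ord(f)+2(e-k+1)$, which is $\ge e+2$ only for $e\ge 2k-\ord(f)$. This is exactly the proof of Theorem \ref{thm:finitedeterminacybound}, and it gives only $(2k-\ord(f))$-determinacy.

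Decisively, your iteration never uses $p$. It would therefore prove that $f=y^2+x^3y$ is contact $4$-determined in characteristic $2$, contradicting Example \ref{ex.detbound}. What is missing is an argument in which the bound on $p$ genuinely enters.

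\paragraph{The characteristic-$0$ part and convergence.}
Your characteristic-$0$ argument is essentially the homotopy proof of \cite[Theorem 1.2.23]{GLS25} that the paper cites (adapted as in \cite{BGM12}). It needs one repair: do the Nakayama step in $K[t][[x]]$, where $\langle x\rangle$ lies in the Jacobson radical. Then the $\xi_i$ are polynomial in $t$ and the degreewise integration is global in $t$. By contrast, patching ``near each $t_0$'' has no connectedness argument behind it over a general valued field.

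The passage to convergence, which you single out as the main obstacle, is harmless, but not for the reason you give. Theorem \ref{grauertapprox} is not of Artin type: it requires \emph{every} solution of order $e\ge e_0$ close to the initial one to extend, which a single formal solution does not provide. Instead, argue as follows. If $\hat\varphi$ is a formal solution, then $f(\jet_N\hat\varphi)\equiv g \bmod \fm^{N+1}$. For $N\ge 2k-\ord(f)$, Theorem \ref{thm:finitedeterminacybound} (with $k-1$ in place of $k$) then gives a convergent equivalence.

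Finally, in the converse you still have to justify $\mu\big(f+\sum x_i^N\big)<\infty$. Replace $f$ by its $k$-jet first and argue as in the proof of Theorem \ref{thm:hypersurface}. The bound on $p$ plays no role there; you only need $p\nmid N$.
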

\begin{proof}
The proof of \cite[Theorem 1.2.23]{GLS25} can be adapted to work for any field of characteristic $0$, as noted in \cite[Remark 1]{BGM12}. Another proof (for a more general statement) is given by Kerner \cite[Corollary 5.5.]{Ker}, which implies the above bounds  not only in characteristic  $0$ but also in positive characteristic if $\Char(K) \ge k +2 -\ord(f)$.
\end{proof}
}

\begin{Example} \label{ex.detbound}{\em  For positive characteristic these bounds do not hold.\\
Consider the power series
       $f=y^2+x^3y\in K[[x,y]]$, $\Char(K)=2$. Then
        $\langle f\rangle+ j(f)
     =\langle y^2,x^2y,x^3\rangle$ and thus $\tau(f)=5$. 
 Moreover, we have
       \begin{displaymath}
  \fm^5\subseteq\fm\cdot\langle f\rangle+\fm^2\cdot j(f) = \langle xy2,y3,x5,x4y\rangle 
        \end{displaymath}
       and if (b) would hold then $f$ would be contact
       $4$-determined. However, $f$ is analytically reducible
        while
       $f+x^5$ is irreducible as can be checked by the procedure
       \texttt{is\_irred} in \textsc{Singular} 
       \cite{DGPS}: 
\begin{verbatim}
> LIB "hnoether.lib";
> ring r=2,(x,y),ds;
> poly f = y2 + x3y;
> is_irred(f);
0
> poly g = f + x5;
> is_irred(g);
1   
\end{verbatim}  
Hence  $f\not\sim_c f+x^5$, and $f$ is not  contact $4$-determined.        
Theorem~\ref{thm:finitedeterminacybound} asserts that $f$ is actually 
       $6$-determined, i.e.\ our result is sharp in this example.
}
\end{Example}

In positive characteristic higher determinacy bounds  were proved 
 in \cite[Theorem 3]{BGM12}, and in a slightly weaker form for contact equivalence in   \cite[Lemma 2.6]{GrKr90} for formal power series. 
  Before we generalize these bounds to convergent power series over real valued fields in Theorem \ref{thm:finitedeterminacybound}, we mention the following important approximation theorem of Grauert, which is used in the proof. \\
 
  We consider four sets of variables, $x = (x_1, \ldots, x_n)$, $s =
  (s_1, \ldots, s_l)$, $Y = (Y_1, \ldots , Y_p)$ and $Z = (Z_1,
  \ldots, Z_q)$. 

\begin{Definition}\label{def.sol}
Let $K$ be a real valued field, $I \subseteq K\{s\}$ an ideal and $\fm:=\langle s_1,\ldots,s_l\rangle$ the maximal ideal of  $K\{s\}$.  Consider an element $F = (F_1, \ldots, F_N)  \in K\{x,s,Y,Z\}^N$.

(1) A {\em solution of order $e$} of the equation $F \equiv 0 \; {\rm mod} \;I$
  is a pair \\
\noindent $(y, z) \in K[s]^p \times K\{x\}[s]^q$ such that
\[
F(x,s, y(s), z(x,s)) \equiv 0 \quad {\rm mod} \; (I+ \fm^{e+1}) \cdot
K\{x,s\}^N.
\]

(2) An {\em analytic solution} of the equation $ F \equiv 0$ mod $I$ is a
  pair $(y, z) \in K\{s\}^p \times K\{x,s\}^q$ 
  such that
\[
F(x,s, y(s), z(x,s)) \equiv 0 \quad {\rm mod} \; I\cdot K\{x,s\}^N, 
\]
\end{Definition}
We can now formulate the approximation theorem.
 
\begin{Theorem}[Grauert's Approximation Theorem for real valued fields]
\label{grauertapprox}
Let $K$ be a real valued field, $I \subseteq K\{s\}$ an ideal and  $F = (F_1, \ldots, F_N)  \in K\{x,s,Y,Z\}^N$.
Let $e_0 \in \N$ and suppose that the system of equations  {
$$F(x,s,Y,Z) \equiv 0 \; {\rm mod} \; I$$
has
a solution $(Y,Z)=(y^{(e_0)}, z^{(e_0)})$ of order $e_0$.}

 Suppose, moreover,
that for all $ e \ge e_0$ every solution 
$(y^{(e)},z^{(e)})$ of order $e$  with
$y^{(e)} \equiv y^{(e_0)} \mod
{\fm}^{e_0+1}$, 
and $z^{(e)} \equiv z^{(e_0)} \mod {\fm}^{e_0+1}$,
extends to a solution  $(y^{(e)} +
u^{(e)}, z^{(e)} + v^{(e)})$ of order $e+1$,
 with
$u^{(e)}\in K[s]^p$ and $v^{(e)} \in K\{x\}[s]^q$ homogeneous of
degree $e+1$ in $s$.

Then the system of equations $F \equiv 0$ mod $I$ has
an analytic solution $(y(s), z(x,s))$, with $y \equiv y^{(e_0)}$ mod
${\fm}^{e_0+1}$, and $z \equiv z^{(e_0)}$ mod ${\fm}^{e_0+1}$.
\end{Theorem}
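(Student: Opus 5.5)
The plan is to mimic Grauert's original proof, in the form given for $\C$ in \cite[Section I.2]{GLS25}. We first build a formal solution from the stepwise extension hypothesis. Then we show, with a majorant estimate in the norms $\|\ \|_\eps$, that a suitably normalized formal solution converges. Throughout, the coefficients and the valuation of $K$ enter only through the properties $\|f+g\|_\eps\le\|f\|_\eps+\|g\|_\eps$ and $\|fg\|_\eps\le\|f\|_\eps\|g\|_\eps$ recalled in the Introduction. Completeness of $K$ is not needed, because we never take limits in $K$. Every coefficient of $y$ and $z$ is produced by finitely many field operations from the data.

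\emph{Step 1: formal solution.} Start with $(y^{(e_0)},z^{(e_0)})$. The hypothesis lets us pass inductively from a solution of order $e$ to one of order $e+1$ by adding $(u^{(e)},v^{(e)})$, homogeneous of degree $e+1$ in $s$. The partial sums define $\hat y\in K[[s]]^p$ and $\hat z\in K\{x\}[[s]]^q$, with $F(x,s,\hat y,\hat z)\in I\cdot K\{x\}[[s]]^N$ by Krull's intersection theorem. This formal solution need not converge, because the $u^{(e)},v^{(e)}$ are arbitrary. So we must show that at each step they can be \emph{chosen} in a controlled way.

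\emph{Step 2: linearization and normal form.} Write
\[
F(x,s,y+u,z+v)=F(x,s,y,z)+\partial_YF\cdot u+\partial_ZF\cdot v+(\text{terms of degree}\ge 2 \text{ in } (u,v)).
\]
Since $u,v$ have $s$-degree $e+1\ge e_0+1$, the quadratic terms lie in $\fm^{e+2}$ once $e\ge e_0$. Hence the extension condition says that the homogeneous degree-$(e+1)$ part of $F(x,s,y^{(e)},z^{(e)})$ lies in the image, modulo $I+\fm^{e+2}$, of the $K$-linear map
\[
L_e:(u,v)\mapsto \partial_YF(x,0,y^{(e_0)}(0),z^{(e_0)}(x,0))\,u+\partial_ZF(\ldots)\,v
\]
restricted to degree $e+1$. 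Here we may replace the Jacobian by its value at the $e_0$-jet. We then use a division theorem with bounds, namely Grauert–Hironaka–Galligo division in $K\{x,s\}$ for a standard basis of $I$ and of the relevant $K\{x\}$-submodule. This gives a splitting: every element in the image of $L_e$ has a preimage $(u,v)$ in a fixed complement (a normal form), and this preimage satisfies
\[
\|(u,v)\|_\eps\le C\,\|\text{rhs}\|_\eps,
\]
with $C$ independent of $e$. The division theorem for convergent series over a real valued field holds in this generality, since its proof uses only the norm inequalities above. Choosing the normal-form solution at every step fixes $(\hat y,\hat z)$ uniquely.

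\emph{Step 3: convergence (main obstacle).} Set $\hat w=\hat y\oplus\hat z$. The uniform bound of Step 2, combined with Taylor expansion of $F$ around the approximate solution, gives a majorant recursion of the form
\[
\|\hat w - w^{(e_0)}\|_\rho \;\le\; C\bigl(\|F(x,s,w^{(e_0)})\|_\rho+ \Phi(\|\hat w-w^{(e_0)}\|_\rho)\bigr),
\]
where $\Phi$ is a convergent series with no constant or linear term. Scaling the $s$-radii $\rho$ small makes the first term small, and a fixed-point (implicit-function) argument in the Banach space $B_\rho$ then shows that the normal-form iteration converges there. The hard part is obtaining a constant $C$ independent of $e$ in the division with remainder modulo $I$ and the Jacobian module, uniformly over the degrees. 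I would handle this first in the case $I=0$ and with $Z$ absent, then treat $I$ by dividing by a standard basis of $I$ in $K\{s\}$, and treat $Z$ by regarding $K\{x\}$-coefficients via the norms $\|\cdot\|_\eps$ in the $x$-variables.

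Once convergence is established, $(y,z)=(\hat y,\hat z)$ is an analytic solution agreeing with $(y^{(e_0)},z^{(e_0)})$ modulo $\fm^{e_0+1}$.
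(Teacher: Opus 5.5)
The paper gives no proof of this theorem; it refers to \cite[Theorem 4.3]{GP25}. Your Steps 1--2 follow the route of Grauert's original argument: a normalized formal solution obtained by division with remainder. The gap is Step 3. It is the real content of the theorem, it is only asserted, and as written it does not work.

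Your majorant inequality presupposes $\|\hat w-w^{(e_0)}\|_\rho<\infty$. For non-complete $K$ the space $B_\rho$ is not a Banach space, since Cauchy sequences of constants need not converge. This contradicts your remark that completeness is never used. The same caveat applies to the usual Neumann-series proof of the division theorem you quote.

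What is missing is the link between your degree-by-degree iteration and an operator defined on a whole ball. Write $L=(J_Y,J_Z)$ for the Jacobian at $s=0$ and $N(\delta)=F(w^{(e_0)}+\delta)-F(w^{(e_0)})-L\delta$. Let $q$ be the quotient map of a bounded division with remainder by $M=J_YK\{s\}^p+J_ZK\{x,s\}^q+I\,K\{x,s\}^N$. Normal forms modulo $I$ must be taken with respect to a local \emph{degree} ordering in $s$, so that the remainder map sends $M+\fm^kK\{x,s\}^N$ into $\fm^kK\{x,s\}^N$. Suppose Step 2 uses this same $q$, and write $\delta_{\le e}:=w^{(e)}-w^{(e_0)}$ for your normalized corrections. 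Then $T(\delta)=-q\bigl(F(w^{(e_0)})+N(\delta)\bigr)$ satisfies $[T(\delta_{\le e})]_{\le e+1}=\delta_{\le e+1}$, where the bracket is truncation in $s$-degree. So it suffices that $T$ maps a ball of $B_{(\eps,\rho)}$ into itself. Then $\|\delta_{\le e}\|$ is bounded by induction, and you need neither completeness nor a contraction.

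If you instead take a Banach fixed point of $T$ (over $\bar K$), you only get $F(w)\equiv r \bmod I$ with $r$ in the complement of $M$. To show $r=0$ you must apply the extension hypothesis again, to the truncations $w^{(e)}$ of $w$, by induction on $e$. Your Step 3 does neither.

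Second, the difficulty you single out is not the real one. Uniformity in $e$ is automatic once a bounded division exists, because $L$ has entries in $K\{x\}$ and acts coefficientwise in $s$. What you need is a division constant that stays bounded as the $s$-radii shrink. Your $\Phi$ does have a linear term, $\bigl(\partial_{(Y,Z)}F(x,s,w^{(e_0)})-L\bigr)\delta$, and it is small only for small $\rho$. For the division by a standard basis of $I$, this is Grauert's normed division along a ray $\rho=t\rho_0$, $t\to 0$, with $\rho_0$ chosen so that the leading monomials dominate.

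Finally, since $u$ does not depend on $x$, the image $J_YK\{s\}^p+J_ZK\{x,s\}^q$ of $L$ is not a $K\{x,s\}$-module. Besides Grauert--Hironaka division by $J_ZK\{x\}^q$, you must split off, with a bounded projection, the finite-dimensional $K$-span of the columns of $J_Y$ modulo $J_ZK\{x\}^q$. Each of these points is manageable, but until they are supplied Step 3 is a plan, not a proof.
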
 

The proof of this theorem  was first given by Grauert for 
$K\{x\} = \C\{x\}$  in his fundamental Inventiones paper \cite{Gr72}, where he proved the existence of a semiuniversal deformation of any complex analytic isolated singularity. The theorem was generalized to power series over a real valued field in the above form in 
 \cite{GP25}. The proof is quite complex, so
 we refer to the proof of Theorem 4.3 in  \cite{GP25}.
  
   \begin{Theorem}[ {Determinacy bound in characteristic $\ge 0$}]\label{thm:finitedeterminacybound}
     Let $0\not=f\in\fm^2 \subseteq K\{x\}$, $K$ an arbitrary real valued field, and $k\in\N$.
     \begin{enumerate}
     \item If $\fm^{k+2}\subseteq\fm^2\cdot j(f)$, then $f$ is
       right $(2k-\ord(f)+2)$-determined. 
     \item If $\fm^{k+2}\subseteq\fm\cdot\langle f\rangle+\fm^2\cdot j(f)$, then $f$ is  contact $(2k-\ord(f)+2)$-determined. 
     \end{enumerate}     
 If $0\not=f\in \fm \smallsetminus \fm^2$ then $\ord(f) =1$ and 
 $f$ is right 1-determined with $f\overset{r }{\sim} x_1$ by the implicit function theorem.
   \end{Theorem}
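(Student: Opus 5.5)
Set $d:=\ord(f)\ge 2$ and $N:=2k-d+2$. We will show that every $g=f+h$ with $h\in\fm^{N+1}$ is right (resp. contact) equivalent to $f$. The argument has two stages: first we solve the equivalence problem formally, degree by degree, and then we pass to a convergent solution with Grauert's approximation theorem \ref{grauertapprox}.

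\textbf{Formal stage.} We look for a coordinate change of the form $\varphi(x)=x+\psi(x)$, with $\psi\in\fm^2K\{x\}^n$, and in the contact case also a unit $u=1+w$, such that $g(x+\psi)=f$ (resp. $(1+w)\,g(x+\psi)=f$). Taylor's formula, which is valid in any characteristic for the polynomial substitution, gives
$$g(x+\psi)=g(x)+\sum_i \frac{\partial g}{\partial x_i}\psi_i+R(\psi),$$
where $R(\psi)$ lies in the ideal generated by the products $\psi_i\psi_j$, with divided-power coefficients. We proceed by induction on $e\ge N$ and assume that the error is already in $\fm^{e+1}$. The leading error term is a homogeneous form of degree $e+1$, and $e+1\ge N+1\ge k+2$ because $N\ge k+1$ (use $d\le k+1$; otherwise $\fm^{k+2}\subseteq\fm^2 j(f)\subseteq \fm^{d+1}$ forces a contradiction with $\ord$). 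So the hypothesis lets us write this term as $\sum_i a_i\,\partial f/\partial x_i$ with $a_i\in\fm^2$, and in case 2 additionally plus $b f$ with $b\in\fm$. We need the hypothesis for $g$ instead of $f$. Since $\partial g/\partial x_i-\partial f/\partial x_i\in\fm^{N}$ and $2N-$ suitable $\ge k+2$, a Nakayama argument gives $\fm^{k+2}\subseteq \fm^2 j(g)$ (resp. with $\fm\langle g\rangle$ added). It is also harmless to work with $j(f)$ and absorb the difference into higher order.

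\textbf{Main obstacle: the quadratic remainder.} The key estimate is that the orders of the corrections can be tracked. The correction $\psi^{(e)}$ produced at step $e$ has order at least $e+1-(d-1)-?$. More precisely, solving $\fm^{e+1}\ni q=\sum a_i\partial_i f$ gives $\ord(a_i)\ge e+1-k$, because elements of $\fm^{e+1}=\fm^{e-k-1}\fm^{k+2}$ can be written with coefficients in $\fm^{e-k-1}\cdot\fm^2$. Then $R(\psi)$ involves $\psi_i\psi_j\cdot(\text{second derivatives, of order}\ge d-2)$. Its order is therefore $\ge 2(e+1-k)+d-2$, and this exceeds $e+1$ exactly when $e\ge 2k-d+1$, i.e. $e\ge N-1$. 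This is where the bound $2k-\ord(f)+2$ comes from, and it replaces the characteristic-zero homotopy method. Checking this inequality carefully, including the mixed terms between corrections of different steps and the first-derivative drift $\partial_i g-\partial_i f$, is the delicate point. Each step thus improves the error by one degree while changing $\varphi$ only in degree $\ge e+2-k\ge 2$. Hence the $\varphi$ stay automorphisms and the formal limit exists in $K[[x]]$.

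\textbf{Convergent stage.} To get convergent $\varphi$ and $u$, we recast the problem so that it fits Theorem \ref{grauertapprox}. We introduce one parameter $s$ and consider $g_s=f+s\,h$. The unknowns are $Z=(\psi,w)\in K\{x,s\}^{n(+1)}$, with no $Y$ and $I=0$, and the equation is
$$F(x,s,Z)=(1+w)\,g_s(x+\psi)-f=0,$$
with $w$ omitted in the right case. The step $e\to e+1$ above, applied in powers of $s$ and with the $x$-order estimates carried uniformly, supplies the required extension of any solution of order $e$ to order $e+1$. Concretely, the degree-$(e+1)$ coefficient in $s$ is an element of $h\cdot\fm$-type ideals that lies in $\fm^{k+2}$ in $x$, solvable by the hypothesis with coefficients in $K\{x\}$. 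Grauert's theorem then yields an analytic solution, and substituting $s=1$ is not immediate. To avoid this we instead scale, replacing $h$ by $s\,h$ and using that the set of $c$ with $f+ch\sim f$ is handled for $c$ small by convergence. Alternatively we apply the theorem with $x$-variables only, by taking $s$ to be the $x$ themselves in the filtration; I expect to follow the second route, as done in \cite{GP25}. The final statement for $\ord f=1$ follows from the implicit function theorem over $K\{x\}$.
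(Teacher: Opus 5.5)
Your formal stage is the paper's iteration, and it is correct. From $\fm^{e+1}=\fm^{e-k-1}\fm^{k+2}\subseteq\fm^{e-k+1}j(f)$ you get a correction in $\fm^{e-k+1}$. The Taylor remainder then has order $\ge\ord(f)+2(e-k)$, which is $\ge e+2$ exactly when $e\ge N=2k-\ord(f)+2$. Two small slips: the threshold is $e\ge N$, not $e\ge N-1$, and the correction sits in degree $\ge e+1-k$, not $e+2-k$.

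Writing the equation as $g(x+\psi)=f$ instead of the paper's $f(\varphi)=g$ means you must transfer the hypothesis to $j(g)$. Your Nakayama argument does this: $\fm^2 j(f)\subseteq\fm^2 j(g)+\fm^{N+2}$ and $N+2\ge k+3$.

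The terms you leave unchecked are fine, though tight. The cross terms between the accumulated $\psi\in\fm^{N-k+1}$ and the new correction $a$ have order $\ge(\ord(f)-2)+(N-k+1)+(e-k+1)=e+2$. The drift satisfies $(\partial_i g-\partial_i f)a_i\in\fm^{N+e-k+1}\subseteq\fm^{e+2}$. So you do obtain a formal automorphism $\hat\varphi$ (and a formal unit $\hat u$), which already settles the case of the trivial valuation.

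The gap is the passage to convergence, and neither of your routes closes it.

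\emph{First route.} Theorem \ref{grauertapprox} yields $\psi\in K\{x,s\}$, convergent only for small $|s|$, but you need $s=1$. That substitution is meaningless when $K\{x,s\}=K[[x,s]]$, and nothing in your sketch gets from small $c$ to $c=1$.

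\emph{Second route} (the one the paper takes). Here the hypothesis of Theorem \ref{grauertapprox} is not satisfied. With $x$ as filtration variables, that theorem requires that \emph{every} solution of order $e$ congruent to the initial one mod $\fm^{e_0+1}$ extends to order $e+1$ by a correction \emph{homogeneous of degree $e+1$}. Your corrections only have order $e-k+1$, and in general they break the congruence mod $\fm^{e_0+1}$. Moreover, no degree-$(e+1)$ correction can do the job: it changes $f(\varphi)$ only in degrees $\ge e+\ord(f)\ge e+2$.

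Concretely, take $f=g=x^2$ in one variable with $\Char K\ne 2$, so $k=1$ and $e_0=2$. For $c\ne0$ and $e\ge3$, the polynomial $\varphi=x+cx^e$ is a solution of order $e$ congruent to $x$ mod $\fm^3$. But $\varphi^2-x^2$ contains $2cx^{e+1}$, which no homogeneous correction of degree $e+1$ removes. The paper's written proof makes the same non-homogeneous step ($b_i\in\fm^{e-k+1}$), so appealing to it does not fill the gap.

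A clean repair is to keep your formal solution and apply Artin's approximation theorem to $g(y)-f(x)=0$ (resp. $y_0\,g(y)-f(x)=0$), as the paper does in the proof of Proposition \ref{prop.MY0}. This gives convergent $y$ (and $y_0$) agreeing with $\hat\varphi$ (and $\hat u$) up to order $\ge2$, hence an automorphism (and a unit).
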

   
\begin{proof} The proof was first given in  \cite[Theorem 3]{BGM12} for formal power series. We give here a (shorter) proof for $K\{x\}$ using of Grauert's approximation theorem.
We prove only statement 1., the proof of statement 2. is similar.

Let $N\geq 2k-\ord(f)+2=:e_0$ and assume that $f-g\in \fm^{N+1}$. We have to prove that there exists an automorphism $\varphi=(\varphi_1,\ldots,\varphi_n)$
of $K\{x\}$ such that $f(\varphi)=g$. We consider the equation 
$$F(x,y)=f(y)-g(x)=0, \ y=(y_1,\ldots,y_n),$$
and want to apply Theorem \ref{grauertapprox} to the equation $F=0$. Since $f-g\in \fm^{e_0+1}$,
this equation has the solution $\varphi^{(e_0)}(x_i)=x_i$ of order  $e_0$:
$$F(x,\varphi^{(e_0)})=0 \mod \fm^{e_0+1}.$$
Let $\varphi^{(e)}$ be a solution of $F(x,y)=0$ of order $e \ge e_0$ such that $\varphi^{(e)}\equiv \varphi^{(e_0)} \mod \fm^{e_0+1}$, that is, 
$\varphi^{(e)}(x_i) = x_i + h_i$ with $h_i \in \fm^{e_0+1}. $
By assumption we have 
$\fm^{k+2}\subseteq \fm^2j(f)$,  {hence $k\ge \ord(f) -1$,} and
$F(x,\varphi^{(e)})\subseteq \fm^{e+1}$.
Since $\fm^{e+1} = \fm^{e-k-1}\fm^{k+2} \subseteq \fm^{e-k+1}j(f)$  {and  $e-k-1\ge e_0-k-1=k-\ord(f) +1\ge 0$,}
we can write
$$F(x,\varphi^{(e)})=f(\varphi^{(e)})-g(x)=\sum_{i=1}^nb_i(x)\frac{\partial f}{\partial x_i}(x)$$
with $b_i\in \fm^{e-k+1}$. Now define
$\varphi^{(e+1)}$ by $\varphi^{(e+1)}_i=\varphi^{(e)}_i-b_i$. We obtain by Taylor's formula (Remark \ref{rm.taylor})
$$f(\varphi^{(e+1)})=f(\varphi^{(e)}-b)=f(\varphi^{(e)})-\sum_{i=1}^nb_i(x)\frac{\partial f}{\partial x_i}(\varphi^{(e)})+H(x),$$
with 
$$\ord(H) \ge \ord(f) + 2( \ord(b) -1) \ge \ord(f) + 2( e-k)
\ge e + (e_0-2k +\ord(f)) = e+2.$$
Moreover
$ \frac{\partial f}{\partial x_i}(\varphi^{(e)}) =   \frac{\partial f}{\partial x_i}(x+h) = \frac{\partial f}{\partial x_i}(x) +
 \sum_{j=1}^n h_j \frac{\partial^2 f}{\partial x_j\partial x_i}(x) + G(x)$.
 We can estimate 
(using $\ord (h_j) \ge e_0+1$ and $\ord (b_i)\ge e-k+1$):
$$\ord (b_iG) 
 \ge (e-k+1)+ \ord(f)-1 + 2e_0  =e +(\ord(f) + e_0-k) +e_0= e+ k+2+e_0 \ge e+2$$ 
 and
$\ord(b_ih_j \frac{\partial^2 f}{\partial x_j\partial x_i}(x) \ge  e+k+2 \ge e+2$.
 
This implies $f(\varphi^{(e+1)})=f(\varphi^{(e)})-\sum_{i=1}^nb_i(x)\frac{\partial f}{\partial x_i}(x)$ + terms of order $\ge e+2$ and thus
$F(x,\varphi^{(e+1)})=0 \mod \fm^{e+2}$, i.e.,
$\varphi^{e+1}$ is an extension of the solution $\varphi^{(e)}$ of order $e+1$. We can apply
Grauert's Approximation Theorem \ref{grauertapprox} and obtain a convergent solution $\varphi$ of $F=0$. 
This proves statement 1.
\end{proof}

\begin{Remark}\label{rm.taylor}{\em
 {(1) For an improvement and far reaching generalizations of the determinacy bounds (also for right-left equivalence) see the work of Kerner 
\cite{Ker}, with background provided in \cite{BGK}.}

(2) We use (the replacement of) the Taylor series in positive characteristics in the following form:\\
Let $f(x)= \sum\limits_{\left| {{\alpha}}  \right| \ge \ord(f)} {c_\alpha  x^{{\alpha}}}\in K\{x\}$, $x=(x_1,...,x_s)$ and ${z}=(z_1,...,z_s)$ new variables. Then  
\begin{align*}
      f({x+z}) =& \sum_{|\alpha|\ge \ord(f)}c_\alpha(x+z)^\alpha
	=f({ x})+
	\sum\limits_{\nu = 1}^s {\frac{{\partial f({x})}}{{\partial x_\nu }}\cdot z_\nu }  + H,\\
H=&	\sum\limits_{\left| \alpha  \right| \ge \ord(f)} {c_\alpha\cdot  
\Big ( 
{\sum\limits_{\left| \gamma  \right| \ge 2 \atop \gamma  \le \alpha } 
{\binom {\alpha _1 } { \gamma _1} \cdot\ldots\cdot
\binom {\alpha _s } { \gamma _s}{x}^{\alpha  - \gamma }{z}^\gamma} }
\Big )}.
\end{align*}
\noindent
If $z=z(x) \in K\{x\}$ then $\ord H(x) \ge  \ord(f) + |\gamma|  (\ord z(x)-1) \ge \ord(f) + 2  (\ord z(x)-1).$
$\gamma\le \alpha$ means that $\gamma_\nu\le\alpha_\nu$ for all $\nu$, and for $k\in \Z$
we have $k{ x}^{\alpha  - \gamma }{ z}^\gamma= 0$ if   $p\mid k$ and  $k {(\modular\hskip 3pt p)}{ x}^{\alpha  - \gamma }{ z}^\gamma$ if  $p\nmid k$. 
}
\end{Remark}

\begin{Example}   \label{ex:determ}  {\em
In concrete examples the integers $k$ in
       Theorem~\ref{thm:finitedeterminacybound} can be computed in 
       \textsc{Singular} (\cite{DGPS}) with the aid of the procedure
       \texttt{highcorner}. If we apply \texttt{highcorner} to a
       standard basis of the ideal $J=\fm^2\cdot j(f)$ resp.\ $J=\fm\cdot\langle
       f\rangle+\fm^2\cdot j(f)$ with respect to some local degree
       ordering the result will be a monomial 
       $\bx^\alpha$, and then $k=\deg(\bx^\alpha)-1$
        {satifies $\fm^{k+1}\subseteq J$.} 
       E.g.\ for
       $f=y^8+x^8y^4+x^{23}$ and $\Char(K)=3$ the following
       \textsc{Singular} computation shows that
       $k=\deg(x^{22}y^2)-1=23$ and $f$ is at least contact $40$-determined by Theorem \ref{thm:finitedeterminacybound}
\begin{verbatim}
> ring r=3,(x,y),ds;
> poly f=y8+x8y4+x23;
> ideal I=maxideal(1)*f+maxideal(2)*jacob(f);
> I=std(I);
> highcorner(I);
x22y2
\end{verbatim}   
In characteristic 0 we get the same  \texttt{highcorner} and hence $f$ is 24-determined  {by the above bounds for characteristic 0}.
On the other hand,  in characteristic 2  we get  $x^{21}y^7$ as highcorner and hence $f$ is 48-determined. This is for contact equivalence. For right equivalence we get $x^{29}y^2$ as 
 \texttt{highcorner} in characteristic 0 and characteristic 3 and thus we get 31 resp. 56 as bound for right-determinacy in characteristic 0 resp. 3.
 In characterisitc 2 $f$ has no isolated singularity.
 }
\end{Example}

   If $\mu(f)<\infty$ then $\fm^{\mu(f)}(K\{x\}/j(f))=0$ and hence $\fm^{\mu(f)} \subseteq j(f)$. Thus the assumption of Theorem \ref{thm:finitedeterminacybound} in (1) holds for $k=\mu(f)$ and similar in (2) for    $k=\tau(f)$. Therefore we get:
   
\begin{Corollary}\label{cor:finitedeterminacybound}
With $f$ as in Theorem \ref{thm:finitedeterminacybound} we have
     \begin{enumerate}
     \item If $\mu(f)<\infty$, then  $f$ is right
       $(2\mu(f)-\ord(f)+2)$-determined.
     \item If $\tau(f)<\infty$, then  $f$ is contact
      $(2\tau(f)-\ord(f)+2)$-determined. 
     \end{enumerate} 
\noindent
     In particular, if $\ord(f) \geq 2$ then $f$ is right $2\mu(f)$-determined and contact  $2\tau(f)$-determined.    \index{determinacy!bound}
 \end{Corollary}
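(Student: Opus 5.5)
The corollary should follow directly from Theorem~\ref{thm:finitedeterminacybound} by choosing $k$ appropriately, as the paragraph before the statement suggests.

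For (1), assume $\mu(f)<\infty$ and set $\mu:=\mu(f)$. The Milnor algebra $M_f=K\{x\}/j(f)$ is a local $K$-algebra of dimension $\mu$ with maximal ideal $\bar\fm$. The chain $M_f\supseteq\bar\fm\supseteq\bar\fm^2\supseteq\cdots$ strictly decreases until it reaches $0$. A stabilization $\bar\fm^i=\bar\fm^{i+1}$ forces $\bar\fm^i=0$ by Nakayama, and each strict step lowers the dimension by at least one. Hence $\bar\fm^{\mu}=0$, that is, $\fm^{\mu}\subseteq j(f)$. Multiplying by $\fm^2$ gives $\fm^{\mu+2}\subseteq\fm^2\cdot j(f)$. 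This is the hypothesis of Theorem~\ref{thm:finitedeterminacybound}(1) with $k=\mu$, so $f$ is right $(2\mu-\ord(f)+2)$-determined.

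For (2) the argument is the same with the Tjurina algebra $T_f$ of dimension $\tau=\tau(f)$. It gives $\fm^{\tau}\subseteq\langle f\rangle+j(f)$. Multiplying by $\fm^2$ yields $\fm^{\tau+2}\subseteq\fm^2\langle f\rangle+\fm^2 j(f)\subseteq\fm\langle f\rangle+\fm^2 j(f)$. Theorem~\ref{thm:finitedeterminacybound}(2) with $k=\tau$ then gives contact $(2\tau-\ord(f)+2)$-determinacy.

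For the final sentence, $\ord(f)\ge2$ gives $2\mu-\ord(f)+2\le2\mu$ and $2\tau-\ord(f)+2\le 2\tau$. Being $N$-determined implies being $N'$-determined for every $N'\ge N$, because $\jet_{N'}(f)=\jet_{N'}(g)$ implies $\jet_N(f)=\jet_N(g)$. So $f$ is right $2\mu(f)$-determined and contact $2\tau(f)$-determined.

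The only step needing care is the Nakayama argument $\fm^{\mu}\subseteq j(f)$. The case $\mu=0$ is excluded, since $f\in\fm^2$ forces $j(f)\subseteq\fm$ and hence $\mu\ge1$. Likewise $\tau\ge1$ because $f\in\fm^2$. Everything else is a direct substitution into the theorem.
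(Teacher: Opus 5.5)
Your proposal is correct and is the paper's argument: the paper likewise observes that $\fm^{\mu(f)}$ annihilates $K\{x\}/j(f)$ (resp.\ $\fm^{\tau(f)}$ annihilates $T_f$), so the hypothesis of Theorem~\ref{thm:finitedeterminacybound} holds with $k=\mu(f)$ resp.\ $k=\tau(f)$. You also spell out three steps the paper leaves implicit: the Nakayama argument, the multiplication by $\fm^2$, and the fact that $N$-determinacy implies $N'$-determinacy for $N'\ge N$.
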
   
      
   We show now that the finite determinacy is equivalent to $\mu(f) < \infty$ for right equivalence and to  $\tau(f) < \infty$  for   contact equivalence, which are two distinct conditions  in positive characteristic.

\begin{Theorem}\label{thm:hypersurface}
Let $K$ be any  {real valued} field and $0\not= f\in \mathfrak{m}\cdot  K\{x\}$. 
\begin{enumerate}
\item $f$ is finitely right determined iff $\mu(f)$ is finite.
\item $f$ is finitely contact determined iff $\tau(f)$ is finite. 
\end{enumerate}
\end{Theorem}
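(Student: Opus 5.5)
The implication ``$\Leftarrow$'' in both parts follows directly from what has been proved. If $\ord(f)=1$, then $f$ is right $1$-determined by the last sentence of Theorem~\ref{thm:finitedeterminacybound}. Moreover $j(f)=K\{x\}$, so $\mu(f)=\tau(f)=0$. If $\ord(f)\ge 2$ and $\mu(f)<\infty$ (resp.\ $\tau(f)<\infty$), then Corollary~\ref{cor:finitedeterminacybound} shows that $f$ is right $2\mu(f)$-determined (resp.\ contact $2\tau(f)$-determined). So the work is in ``$\Rightarrow$''.

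For ``$\Rightarrow$'', suppose $f$ is right (resp.\ contact) $k$-determined. I would construct one explicit series $g$ with $f-g\in\fm^{k+1}$ and $\mu(g)<\infty$. Write $F:=\jet_k(f)\in K[x]$, a polynomial of degree $\le k$. Choose $N\in\{k+1,k+2\}$ with $p\nmid N$; this is possible because two consecutive integers cannot both be divisible by $p$, and in characteristic $0$ any $N$ works. Set
$$g:=F+\sum_{i=1}^n x_i^{N}\in K[x]\subseteq K\{x\}.$$
Then $\jet_k(g)=\jet_k(f)$, so $f\rsim g$ (resp.\ $f\csim g$) by determinacy. Since $\mu$ is a right invariant, $\mu(f)=\mu(g)$. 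In the contact case, $\tau$ is a contact invariant, and $tj(g)\supseteq j(g)$ gives $\tau(g)\le\mu(g)$, so $\tau(f)=\tau(g)\le\mu(g)$. Hence both statements reduce to the single claim $\mu(g)<\infty$.

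To prove $\mu(g)<\infty$, I would use Lemma~\ref{lem.lefschetz}. By part (1) it suffices to show $\dim_K K[[x]]/j(g)K[[x]]<\infty$. By part (2) we may then replace $K$ by its algebraic closure $\bar K$. The partial derivatives are
$$\partial g/\partial x_i=\partial F/\partial x_i+Nx_i^{N-1},$$
where $\partial F/\partial x_i$ has degree $\le k-1<N-1$. The top-degree homogeneous parts $Nx_1^{N-1},\dots,Nx_n^{N-1}$ have no common zero in $\P^{n-1}(\bar K)$, because $N\ne 0$ in $K$. Homogenizing with a new variable $x_0$, the projective hypersurfaces $\{\partial g/\partial x_i=0\}$ therefore meet the hyperplane $x_0=0$ nowhere. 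Hence the affine variety $V(\partial g/\partial x_1,\dots,\partial g/\partial x_n)\subseteq\bar K^n$ is projectively closed, and being affine it is finite. In particular the origin is an isolated point of this variety, if it lies on it at all. By the Nullstellensatz this gives $\fm^m\subseteq j(g)\bar K[x]_{\fm}$ for some $m$. Therefore $\dim_{\bar K}\bar K[[x]]/j(g)\bar K[[x]]<\infty$, and descending through Lemma~\ref{lem.lefschetz} gives $\mu(g)<\infty$.

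The main obstacle is this last step: making the ``no zeros at infinity implies finitely many affine zeros'' argument clean over an arbitrary field, and transferring finiteness from the polynomial ring to the power series rings. Lemma~\ref{lem.lefschetz} handles both the field extension and the passage between $K\{x\}$ and $K[[x]]$. The condition $p\nmid N$ is exactly what makes the construction work in positive characteristic: without it the top-degree terms $Nx_i^{N-1}$ vanish and the argument collapses.
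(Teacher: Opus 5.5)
Your proof is correct. For the right case it is essentially the paper's argument: both replace $f$ by $g=\jet_k(f)+\sum_i x_i^N$ with $N>k$ and $p\nmid N$, and both use that the top-degree parts of $\partial g/\partial x_i$ are $Nx_i^{N-1}$. The paper packages the finiteness via Gr\"obner bases: with respect to a global degree ordering the leading monomials are $x_i^{N-1}$, so $\dim_K K[x]/j(g)<\infty$, and it then compares with the completion at $0$ over $\bar K$. You use the ``no common zeros at infinity'' argument instead, which is fine. Note, though, that it is the common zero locus of the homogenized partials, not each individual hypersurface, that misses $\{x_0=0\}$.

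The genuine difference is the contact case. The paper argues separately there. It considers the family $g_N+tf$, uses $g_N+tf\csim tf\csim f$ for $t\neq 0$, and invokes semicontinuity of $\tau$ (Proposition~\ref{prop.semcont}) to get $\tau(f)=\tau(g_N+tf)\le\tau(g_N)<\infty$ for a suitable $t\neq 0$ near $0$. To guarantee such a $t$ exists, it passes to an algebraically closed field, while still relying on $g_N+tf\csim f$ there. You instead note two things: the same $g$ as in the right case is contact equivalent to $f$, and $\tau(g)\le\mu(g)$ because $j(g)\subseteq tj(g)$. So both parts reduce to the single claim $\mu(g)<\infty$.

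Your route is shorter and needs no semicontinuity theorem. It also uses the determinacy hypothesis only over the original field $K$. The paper's route, by contrast, works with the contact invariant $\tau$ alone, as a deformation argument that does not depend on bounding $\tau$ by $\mu$.
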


\begin{proof} According to Corollary \ref{cor:finitedeterminacybound} we only need to show that finite determinacy implies that $\mu(f)$ or $\tau(f)$ are finite. The theorem is stated in  \cite[Theorem 4.8]{GPh2} for $K[[x]]$ with a rather short proof. Therefore we prove it here in full detail.\\
By Lemma \ref{lem.lefschetz} we have $\mu(f) = \dim_K K[[x]]/j(f)$ and $\tau(f) = \dim_K K[[x]]/tj(f)$.

1. Let $f$ be right $k$-determined. The easiest way to see that $\mu(f)$ is finite is perhaps by using the theory of Gr\"obner bases.
By finite determinacy we may assume that $f$ is a polynomial of some degree $d$ ($\ge k$).
Set $g_N := x_1^N+...+x_n^N$, $N > d$, and let $ \Char (K) \nmid N$ if $\Char (K) >0$. Then $f\overset{r}{\sim} f+g_N$ and hence $\mu(f) = \mu(f +g_N)$. The Jacobian ideal of $f +g_N$ is
$$j(f +g_N) = \langle N x_1^{N-1}+\frac {\partial f}{\partial x_1},...,N x_n^{N-1}+\frac {\partial f}{\partial x_n}\rangle \subseteq K[x].$$
Now choose a global degree ordering on $K[x]$ (see \cite{GG}). Since the degree of $\frac {\partial f}{\partial x_i}$
is $\le d-1 < N-1$, it follows that $x_i^{N-1}$ is the leading monomial of 
$N x_i^{N-1}+\frac {\partial f}{\partial x_i}$
with respect to a global degree ordering. Hence $x_i^{N-1}$ is  contained in the leading ideal $L(j(f +g_N))$ of $j(f +g_N)$ for $i=1,...,n$ and obviously $\dim_K K[x]/L(j(f +g_N))<\infty$.
By \cite[Corollary 7.5.6]{GG} we have 
$$\dim_K K[x]/j(f +g_N) = \dim_K K[x]/L(j(f +g_N))< \infty.$$
On the other hand 
$$\dim_K K[[x]]/j(f +g_N)K[[x]] \le \dim_K K[x]/j(f +g_N).$$
To see this, we may assume that $K$ is algebraically closed, since passing to the algebraic closure does not change the dimension by Lemma \ref{lem.lefschetz}. Now, for every ideal $I\subseteq K[x]$ with $\dim_KK[x]/I < \infty$
the the variety $V(I)$ consists of finitely many maximal ideals of the form $\langle x_1 -p^i_1,...,x_n-p^i_n\rangle$,
$p^i=(p^i_1,...,p^i_n) \in K^n$, $i=1,...,s$, with, say, $p^1 = 0$. We have (see also \cite[Appendix A.9]{GG})
 {$$ \dim_K K[x]/I = \dim_K K[[x]]/IK[[x]] +\sum^s_{i=2} \dim_K K[[x-p^i]]/IK[[x-p^i]].$$}
Therefore 
$\mu(f) = \dim_K K[[x]]/j(f +g_N)K[[x]] < \infty$.

2. Let $f$ be contact  $k$-determined. We use a different argument to show that $\tau(f) < \infty$. 
 {For every fixed $ 0\ne t \in K$ we have  $tf\overset{c}{\sim} f$ and hence $tf$ is contact $k$-determined.
Choose the polynomial $g_N$ as in 1. and set
$$ f_t := g_N + t f \in K[t]\{x\}.$$
Then $tj(f_t) = \langle\frac {\partial f_t}{\partial x_1},...,\frac {\partial f_t}{\partial x_n}, f_t\rangle \subseteq K\{x\}$ and  
$f_t \overset{c}{\sim} tf \overset{c}{\sim} f$.} We get 
 $$\tau (f) = \tau (f_t) = \dim_K K\{x\}/tj(f_t) \text{ for } t\ne 0.$$ 
By the semicontinuity of $\tau$, Proposition \ref{prop.semcont}
 below,
there exists an open  neighbourhood $U \subseteq K$ of 0 such that  
$$\tau (f_t) \le \tau (f_0) = \tau(g_N) < \infty \text{ for } t \in U.$$
As in 1. we may assume that $K$ is algebraically closed. Then $K$ is infinite and hence $U\smallsetminus \{0\}$ is not empty and for $t \in U\smallsetminus \{0\}$ we have
$\tau(f) = \tau(f_t) < \infty$.
 \end{proof}

\begin{Remark} {\em
 The above results were proved more generally for matrices and ideals of formal power series in  \cite{GPh1} and  \cite{GPh2} (see also \cite[Section 3.1.3]{GLS25}). 
The determinacy bounds of Theorem \ref{thm:finitedeterminacybound} have
been generalized to more classes of singularities and maps and more equivalence relations in a very general context in \cite{BGK} and \cite{Ker}, see also \cite[Appendix A]{GLS25}.
}
\end{Remark}

We mention the following semicontinuity of $\mu$ and $\tau$, which was used above, but is of independent interest. Let $A$ be a Noetherian ring  and $F\in A[[x]]$ a formal power series with coefficients in $A$. For $\fp \in \Spec A$ let $k(\fp) = A_\fp/\fp A_\fp$ be the residue field of the local ring $A_\fp$ and $F(\fp) \in k(\fp)[[x]]$ the image of $F$ in $k(\fp)[[x]]$. Moreover, we have
\begin{center}
$\mu(F(\fp)) = \dim_{k(\fp)} k(\fp)[[x]]/\langle\frac {\partial F(\fp)}{\partial x_1},...,\frac {\partial F(\fp)}{\partial x_n}\rangle$\\
$\tau(F(\fp)) = \dim_{k(\fp)} k(\fp)[[x]]/\langle\frac {\partial F(\fp)}{\partial x_1},...,\frac {\partial F(\fp)}{\partial x_n}, F(\fp)\rangle.$
\end{center}

\begin{Proposition}[Semicontinuity of $\mu$ and $\tau$] 
\label{prop.semcont}
Let $A$ be Noetherian, $F\in A[[x]]$, and  $\fp \in \Spec A$. 
Then $\mu(F(\fp))$ and $\tau(F(\fp))$ are semicontinuous at $\fp \in \Spec A$. That is, there is an open neighbourhood $U \subseteq \Spec A$  of $\fp$ such that 
$\mu(F(\fq))\leq \mu(F(\fp))$   and $\tau(F(\fq))\leq \tau(F(\fp))$ for all  $\fq \in U$.
\end{Proposition}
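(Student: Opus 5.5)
We plan to reduce the statement to semicontinuity of the fibre dimension of a finitely generated module over a Noetherian ring. The only real difficulty is that $A[[x]]$ is not finite over $A$.

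Let $I\subseteq A[[x]]$ be either $j(F)$ or $\langle F, j(F)\rangle$; the two cases are identical. Since partial derivatives commute with reduction mod $\fp$, $I$ maps onto the corresponding ideal $I(\fq)$ of $F(\fq)$ in $k(\fq)[[x]]$ for every $\fq$. If $\mu(F(\fp))=\infty$ (resp.\ $\tau(F(\fp))=\infty$) there is nothing to prove, so assume $d:=\dim_{k(\fp)} k(\fp)[[x]]/I(\fp)<\infty$. As in the proof of Lemma \ref{lem.lefschetz}, Nakayama then gives $\x^{d}k(\fp)[[x]]\subseteq I(\fp)$. Set $m:=d+1$ and truncate.

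For each $\fq$ let $I_m(\fq)$ denote the image of $I$ in $k(\fq)[[x]]/\x^{m}$. Put
$$B:=A[[x]]/\x^{m}=A[x]/\bar\x^{m},$$
a free $A$-module of finite rank with basis the monomials of degree $<m$, and let $M:=B/IB$. This is a finitely generated $A$-module, and since $A[[x]]/\x^m\otimes_A k(\fq)=k(\fq)[[x]]/\x^m$, we get
$$M\otimes_A k(\fq)=k(\fq)[[x]]/(I(\fq)+\x^m).$$
By the standard semicontinuity of fibre dimension for finitely generated modules (Nakayama: lift a $k(\fp)$-basis of $M\otimes k(\fp)$ to generators of $M_\fp$, then to generators of $M_g$ for some $g\notin\fp$), there is an open $U=D(g)\ni\fp$ with
$$\dim_{k(\fq)} M\otimes k(\fq)\le \dim_{k(\fp)} M\otimes k(\fp)=d \quad\text{for } \fq\in U.$$

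The main obstacle is passing back from the truncation to the full power series ring at $\fq$: we need $\x^m\subseteq I(\fq)$ for $\fq$ near $\fp$. First apply the same Nakayama argument over $k(\fq)$. The algebra $C:=k(\fq)[[x]]/(I(\fq)+\x^{m})$ has dimension $\le d<m$. Hence the chain $\fm_C^0\supseteq\fm_C^1\supseteq\cdots\supseteq\fm_C^{m}=0$ must become stationary before step $m$, so some $\fm_C^{j}=\fm_C^{j+1}$ with $j<m$. Nakayama then gives $\fm_C^{j}=0$, and therefore $\x^{d}\subseteq I(\fq)+\x^{m}=I(\fq)+\x^{d+1}$. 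Next use Nakayama in the local ring $k(\fq)[[x]]$: from $\x^d\subseteq I(\fq)+\x\cdot\x^d$ we obtain $\x^d\subseteq I(\fq)$.

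Consequently $k(\fq)[[x]]/I(\fq)=C$, and
$$\mu(F(\fq))\ (\text{resp. }\tau(F(\fq)))\le d \quad\text{for all } \fq\in U.$$
Shrinking $U$ to the intersection of the two open sets obtained handles $\mu$ and $\tau$ simultaneously.
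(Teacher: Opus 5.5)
Your proof is correct, and it is essentially the argument behind the references the paper cites (\cite[Proposition 3.1]{GP21}, \cite[Proposition 3.2.34]{GLS25}). You truncate to the finite $A$-module $A[x]/(I+\langle x\rangle^{d+1})$, apply semicontinuity of fibre dimension, and then recover $\x^d\subseteq I(\fq)$ from $\x^d\subseteq I(\fq)+\x^{d+1}$ by Nakayama. The only slip is verbal: the image of $I$ generates $I(\fq)$ rather than mapping onto it, and generation is all you actually use.
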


\begin{proof} The proof is given in \cite[Proposition 3.2.34]{GLS25}, based on \cite[Proposition 3.1]{GP21}. A proof for $A=K[t]$ and for closed points $t\in \Spec A$ is given in \cite[Proposition 3.4]{GPh2}. 
\end {proof}

In particular, if $A=K[t]/I$, $t=(t_1,...,t_k)$, $I$ an ideal and $K$ algebraically closed, each closed point $\fp  \in V(I) \subseteq \Spec A$ is of the form $\langle t-p\rangle,\  p\in K^k$. Then $k(\fp)=A/ \langle t-p\rangle \cong K$ and $F(p):= F(\fp) \in K[[x]]$. If  $K$ is a real valued  field, we write $F \in A\{x\}$ if $F(p)\in K\{x\}$ for each $p\in \Max A$ (the set of maximal ideals of $A$), which we identify with $K^k$. The semicontinuity theorem then says that for each $p \in K^k$ there exists a Zariski open neighbourhood $U\subseteq V(I)$ of $p$ such that 
$$\mu(F(q))\leq \mu(F(p)) \text{ and } \tau(F(q))\leq \tau(F(p)) \text{ for all } q \in U.$$
 {The same result holds for  $A= K\{t\}/I$ and $F\in A\{x\} = K\{x,t\}/ I K\{x,t\}$, $K$ a complete real valued field (e.g. $K=\C$ or $\R$),} with $U=P^K_\eps(p) \cap V(I)$, $P^K_\eps(p)$ the policylinder  $\{ q=(q_1,...,q_n) \in K^k \mid |q_i-p_i]< \eps_i\}$, $\eps_i  >0$ (see the beginning of the Introduction). This follows from 
Proposition \ref{prop.semcont}, since the Milnor and Tjurina numbers do not change if we pass from $A\{x\}$ to $A[[x]]$ (see Lemma \ref{lem.lefschetz}).\\

We complete this section with  generalizations of the Mather-Yau theorem to power series $f$ in $K\{x\}$.
Recall the higher Tjurina and Milnor algebras from  {\cite{GLS25} and} \cite{GPh} for $k\ge0$:
\begin{eqnarray*}
T_k(f)&:=&K\{x\} \big/\big\langle f, \mathfrak{m}^k j(f)\big\rangle \text{ resp.} \\
M_k(f) &:=&K\{x\}/ \mathfrak{m}^k j(f),
\end{eqnarray*}
are called the \textit{$k$-th Tjurina} resp. \textit{$k$-th Milnor algebra} of $f$. For $k=0$ this is the usual Tjurina algebra $T_f=K\{x\}/\langle f, j(f) \rangle$ resp. Milnor algebra $M_f=K\{x\}/j(f)$  from Definition \ref{def:mu_pos}, with dimensions $\tau(f)$ resp. $\mu(f)$.

\begin{Proposition}[Mather-Yau in characteristic 0]
\label{prop.MY0}
Let $K$ be a real valued, algebraically closed field of characteristic 0 and $f, g \in\mathfrak{m}\subseteq K\{x\}$. 
\begin{enumerate}
\item  If $\tau(f)<\infty$ the following are equivalent:\\
i) $f\mathop  \sim \limits^c g$.\\
ii)  For all $k\ge 0$, $T_k(f)\cong T_k(g)$ as $K$-algebras.\\
iii) There is some $k\ge 0$ such that  $T_k(f)\cong T_k(g)$ as $K$-algebras.
\item If $\mu(f)<\infty$ the following are equivalent:\\
i) $f\mathop  \sim \limits^r g$.\\
ii) For all $k\ge 0$, $M_k(f)\cong M_k(g)$ as $K\{t\}$-algebras.\\
iii) There is some $k\ge 0$ such that  $M_k(f)\cong M_k(g)$ as $K\{t\}$-algebras.
\end{enumerate}
\end{Proposition}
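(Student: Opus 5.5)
The implications i)$\Rightarrow$ii)$\Rightarrow$iii) are elementary, so the work is iii)$\Rightarrow$i), which I would reduce to the known complex Mather--Yau theorem for higher Tjurina/Milnor algebras (Mather--Yau for $k=0$, the extension to all $k$ in \cite{GPh}). For i)$\Rightarrow$ii): if $f=u\varphi(g)$, then $\varphi$ maps $\fm^k$ onto $\fm^k$ and $j(\varphi(g))=\varphi(j(g))$. Moreover $\partial(u h)/\partial x_i \equiv u\,\partial h/\partial x_i \mod \langle h\rangle$, so $\langle f,\fm^k j(f)\rangle=\varphi\langle g,\fm^k j(g)\rangle$ and $\varphi$ induces $T_k(g)\cong T_k(f)$. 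In the right case we have $u=1$ and $\fm^k j(f)=\varphi(\fm^k j(g))$. This isomorphism is compatible with the $K\{t\}$-structure $t\mapsto f$, resp.\ $t\mapsto g$, because $\varphi(g)=f$.

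For iii)$\Rightarrow$i) in the contact case, assume $T_k(f)\cong T_k(g)$. Then $\tau(g)\le\dim T_k(g)=\dim T_k(f)<\infty$, since $\fm^k j(f)$ has finite colength once $tj(f)$ does. By Corollary \ref{cor:finitedeterminacybound} both $f$ and $g$ are finitely contact determined. So, for some large $N$, we may replace them by their polynomial $N$-jets, and $T_k$ is unchanged up to isomorphism. Let $A\subseteq K$ be the finite set of coefficients of these jets together with the structure constants of a chosen isomorphism $T_k(f)\cong T_k(g)$, written on a monomial $K$-basis. Then $\Q(A)$ embeds into $\C$, as in Proposition \ref{prop:lefschetz1}. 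By Lemma \ref{lem.lefschetz} the dimensions are preserved, and $T_k(f)\otimes L\cong T_k(g)\otimes L$ over $L=\Q(A)$ and hence over $\C$. The complex theorem then gives $f\csim g$ over $\C$.

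The main obstacle is descending this equivalence from $\C$ back to $K$. Here I would use algebraic closedness of $K$. The condition ``$\jet_N(u\cdot\varphi(g))=\jet_N(f)$ with $\varphi$ invertible'' is a system of polynomial equations and one inequation, in the finitely many jet coefficients of $u$ and $\varphi$, with coefficients in $\Q(A)$. By finite determinacy, the existence of an $N$-jet solution suffices for contact equivalence. The system has a solution over $\C$, so it is consistent. By the Nullstellensatz, or quantifier elimination for algebraically closed fields of characteristic $0$, it then has a solution over the algebraic closure of $\Q(A)$ inside $K$. That solution yields polynomial $u$ and $\varphi$ over $K$, and determinacy gives $f\csim g$ in $K\{x\}$. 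The same argument works for the right case. There one uses right determinacy from Corollary \ref{cor:finitedeterminacybound}, the $K\{t\}$-algebra structure (i.e.\ the action of $f$), and the complex theorem for $M_k$ as $\C\{t\}$-algebras from \cite{GPh}. The finitely many structure constants of multiplication by $f$ are included in $A$.
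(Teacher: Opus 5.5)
Your proof is correct, but it is organized differently from the paper's. The paper does not redo the Lefschetz transfer. It quotes the formal statement \cite[Proposition 2.1]{GPh}, which is obtained there from the complex case by the Lefschetz principle. That result gives a formal unit $\bar u$ and a formal automorphism $\bar\psi$ with $g=\bar u\, f(\bar\psi)$. The paper then applies Artin's approximation theorem to the equation $g(x)-y_0f(y_1,\dots,y_n)=0$, obtaining convergent $u,\psi$ that agree with $\bar u,\bar\psi$ to order $\ge 2$.

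You instead use finite determinacy in $K\{x\}$ twice. First, you replace $f,g$ by polynomials, so that only finitely many constants occur and $\Q(A)$ is finitely generated. Second, after the complex theorem has produced an equivalence over $\C$, you conclude from a solution over $K$ of the finitely many jet equations for $u$ and $\varphi$. Both transfer steps are sound:
\begin{itemize}
\item A monomial $K$-basis of $T_k(f)$ (resp.\ $M_k(f)$) is already a basis over the coefficient field, by Lemma \ref{lem.lefschetz} and faithful flatness of field extensions. Hence the multiplication tables and the class of $f$ are defined over $\Q(A)$.
\item A complex solution of the $\sigma$-image of your system, with the inequation handled by the Rabinowitsch trick, shows that the corresponding ideal over $\Q(A)$ is proper. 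It therefore has a zero in the algebraic closure of $\Q(A)$ inside $K$.
\end{itemize}

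As for what each route buys: yours is self-contained modulo the complex theorem and avoids Artin's approximation theorem. It makes visible exactly where algebraic closedness of $K$ is used, namely the descent from $\C$ to $K$, which the paper leaves inside \cite{GPh}. It also writes out i)$\Rightarrow$ii), which the paper omits. The paper's route is shorter given the citations, and it provides a formal-to-convergent template that is reused for Theorem \ref{thm.MYp}.

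Note that the analytic input is not eliminated in your version, only moved. Finite determinacy over an arbitrary real valued field is Theorem \ref{thm:finitedeterminacybound}, which is proved via Grauert's approximation theorem. Incidentally, your determinacy trick would equally replace Artin approximation in the paper's own passage from $K[[x]]$ to $K\{x\}$. There, truncating $\bar u,\bar\psi$ at the determinacy degree already suffices, because determinacy is preserved under contact (resp.\ right) equivalence.
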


\begin{proof}
 {The statement was proved in \cite[Proposition 2.1]{GPh} for $K[[x]]$, $K$ an algebraically closed field  (it is deduced  from  \cite[Theorem 1.2.26]{GLS25} for $\C\{x\}$ by using the "Lefschetz principle"). 
The convergent case follows from Artin's approximation theorem. As an example (and a model for the other cases in this paper where Artin's theorem is used), we write down the argument for contact equivalence explicitly.}

 {
Let $f, g\in \mathfrak{m}\subseteq K\{{x}\}$ be such that the finite dimensional $K$-algebras $T_k(f)$ and $T_k(g)$ are isomorphic for some $k\ge 0$. Then, by \cite[Proposition 2.1]{GPh} there are $\bar\psi\in \Aut(K[[x]])$, $\bar\psi(x_i) = :\bar\psi_i(x_1,...,x_n)$,  and a unit $\bar u\in K[[x]]^{\ast}$, 
 such that 
$ g(x) = \bar u(x) f(\bar\psi_1(x),...,\bar\psi_n(x)).$  
That is, the  equation 
$$F(x,y) = g(x)-y_0 f(y_1,...,y_n)=0,$$
$F(x,y) \in K\{x,y_0,y_1,...,y_n\}$,
has a formal solution $y_0=\bar u(x), y_i = \bar\psi_i(x)$. By Artin's analytic approximation  theorem \cite {Ar68} we get  for any $c>0$ convergent solutions, that is  $u(x), \psi_i(x) \in K\{x\}$, such that $ g(x) -  u(x) f(\psi_1(x),...,\psi_n(x)) =0$. Moreover,  
$u$ und $\psi$ coincide with $\bar u$ and $\bar \psi_i$ up to order $c$. Taking $c\ge 2$, then $u\in K\{x\}^\ast$  is a unit and
$\psi = (\psi_1,...,\psi_n) \in \Aut(K\{x\})$. This proves the theorem.
}
\end{proof}

 {The condition that $K$ is algebraically closed cannot be omitted. Let $a \in K$ , $d \ge 2$ and $\sqrt[d]{a} \notin K$. Then  $T_k(f)=T_k(g)$ for $k\ge 0$ but
$f=x^d-y^d$ is not contact equivalent to $g=x^d-ay^d$. The same applies to right equivalence.}

Proposition \ref{prop.MY0} is wrong in positive characteristic. E.g., take $f=y^2+x^3y$ and $g=f+x^5$, $\Char K=2$. Then $T_1(f)=T_1(g)$ but $f\mathop{\not\sim}\limits^{c} g$. 
 {For Mather-Yau in positive characteristic} we need the higher Tjurina and Milnor algebras.

\begin{Theorem}[Mather-Yau in characteristic $>0$] \label{thm.MYp}
Let $K$ be any real valued field and let $f,g\in K \{x\}$ with $\ord(f) \ge 2$.
\begin{enumerate}
\item If $\tau(f)<\infty$ the following are equivalent:\\
i) $f\mathop  \sim \limits^c g$. \\
ii) $T_k(f)  \cong T_k(g)$ as $K$-algebras for some (equivalently for all) $k\ge2(\tau(f)-\ord(f)+2)$.
\item If $\mu(f)<\infty$ the following are equivalent:\\
i) $f\mathop  \sim \limits^r g$. \\
ii) $M_k(f)  \cong M_k(g)$ as $K\{t\}$-algebras for some (equivalently for all) $k\ge2(\mu(f)-\ord(f)+2)$.\\ Note that $t$ acts on $M_k(f)$ by multiplication with $f$ and on  $M_k(g)$ by multiplication with $g$; an isomorphism $\varphi$ of $K\{t\}$-algebras means    $\varphi(f)=g$.
\end{enumerate}
\end{Theorem}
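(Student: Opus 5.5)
The easy direction i)$\Rightarrow$ii) is a direct computation; the content lies in ii)$\Rightarrow$i), which I would reduce to the formal case and then transfer to $K\{x\}$ by approximation.

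\emph{Step 1: i)$\Rightarrow$ii).} Suppose $g=u\,\varphi(f)$ with $\varphi\in\Aut(K\{x\})$ and $u$ a unit. By the chain rule, $j(\varphi(f))=\varphi(j(f))$, and $\varphi(\fm)=\fm$. Moreover, $\partial_i(u\varphi(f))\equiv u\,\partial_i\varphi(f) \mod \langle\varphi(f)\rangle$. Together these give
\[
\langle g,\fm^k j(g)\rangle=\varphi\bigl(\langle f,\fm^k j(f)\rangle\bigr).
\]
Hence $\varphi$ induces $T_k(f)\cong T_k(g)$ for every $k$. In the right case, $\varphi$ maps $\fm^k j(f)$ onto $\fm^k j(g)$ and sends $f$ to $g$, so it is an isomorphism of $K\{t\}$-algebras.

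\emph{Step 2: ii)$\Rightarrow$i), lifting the isomorphism.} Let $\Phi:T_k(f)\to T_k(g)$ be an isomorphism with $k\ge 2(\tau(f)-\ord(f)+2)$; in particular $k\ge 1$. Since $\fm^{\tau(f)}\subseteq tj(f)$, we get $\fm^{\tau(f)+k}\subseteq\langle f,\fm^k j(f)\rangle$, so $T_k(f)$ is finite dimensional, and so is $T_k(g)$. Lift $\Phi$ to an endomorphism $\psi$ of $K\{x\}$ by choosing preimages $\psi_i\in\fm$ of $\Phi(x_i)$. Because $\Phi$ is an isomorphism of local Artinian algebras whose maximal ideals contain the images of $\fm^2$, $\psi$ induces an isomorphism on $\fm/\fm^2$, so $\psi\in\Aut(K\{x\})$. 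Replacing $f$ by $\psi(f)$, which is contact equivalent to $f$, we may assume
\[
I:=\langle f,\fm^kj(f)\rangle=\langle g,\fm^kj(g)\rangle.
\]

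\emph{Step 3: showing $f-ug\in$ a high power of $\fm$ (the main obstacle).} Write $g=af+\sum b_i\,\partial_i f$ with $b_i\in\fm^k$, and symmetrically $f=a'g+\sum b_i'\,\partial_i g$. I would argue that $a$ is a unit: otherwise $g\in\fm f+\fm^k j(f)$, and comparing with the symmetric expression for $f$, Nakayama's lemma would give $f\in\fm^k j(f)$, which is impossible for $k\ge1$. Then $a^{-1}g=f+\sum a^{-1}b_i\,\partial_i f$. Applying the Taylor formula of Remark~\ref{rm.taylor} to $f(x+a^{-1}b)$, which agrees with $f+\sum a^{-1}b_i\partial_i f$ up to terms of order $\ge \ord(f)+2(k-1)$, and then absorbing the remaining error via determinacy, I expect $a^{-1}g$ to be contact equivalent to a series $h$ with $f-h\in\fm^{N+1}$ for some $N\ge 2\tau(f)-\ord(f)+2$. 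The hypothesis $k\ge2(\tau-\ord+2)$ is exactly what should make these orders match. The delicate part is controlling the error terms, possibly iterating this Newton-type step, within the order bounds. Once this is done, Corollary~\ref{cor:finitedeterminacybound} (contact $(2\tau(f)-\ord(f)+2)$-determinacy) gives $h\csim f$, hence $f\csim g$.

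\emph{Step 4: the right case, and the transfer to $K\{x\}$.} The right case is identical: use $\fm^k j$, keep $\Phi(f)=g$ so that $g-f\in\fm^kj(f)$ directly, run the same Taylor step, and finish with right $(2\mu(f)-\ord(f)+2)$-determinacy. If the algebraic manipulations are easier done formally, I would first establish the statements in $K[[x]]$ (as in \cite{GPh}), which is legitimate since Lemma~\ref{lem.lefschetz} identifies $T_k$ and $M_k$ over $K\{x\}$ and $K[[x]]$. I would then pass to convergent solutions of $g-y_0f(y)=0$, respectively $g-f(y)=0$, by Artin approximation exactly as in the proof of Proposition~\ref{prop.MY0}. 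That argument is insensitive to the characteristic, and taking the approximation order $c\ge2$ keeps the unit and the automorphism.
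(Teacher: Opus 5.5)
Your approach works and takes a different route from the paper's, but two steps need repair.

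\textbf{The bound on $k$ and the unit argument.} You only record $k\ge 1$. In fact $\fm^{\tau(f)}\subseteq tj(f)\subseteq\fm^{\ord(f)-1}$ forces $\tau(f)\ge\ord(f)-1$, hence $k\ge 2$, and you need this twice. First, the map $x\mapsto x+a^{-1}b$ is an automorphism only if $b_i\in\fm^2$. Second, your unit argument is incorrect as written. The claim that $f\in\fm^k j(f)$ is impossible for $k\ge 1$ fails at $k=1$: $f\in\fm j(f)$ is perfectly possible by the Euler relation, e.g.\ $f=x^2+y^3$ with $\Char(K)\ne 2,3$. Moreover, substituting $g=af+\sum_j b_j\partial_j f$ into $f=a'g+\sum_i b_i'\partial_i g$ creates terms $b_i'b_j\,\partial_i\partial_j f$ that you do not control. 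Replace Nakayama by an order count. For $k\ge 2$ one has $\fm^k j(h)\subseteq\fm^{\ord(h)+1}$, so $a\in\fm$ would give $\ord(g)\ge\ord(f)+1$. On the other hand, $f=a'g+\sum_i b_i'\partial_i g$ gives $\ord(f)\ge\ord(g)$, a contradiction.

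\textbf{Closing Step~3.} This needs no iteration. Put $\chi(x)=x+a^{-1}b$. Remark~\ref{rm.taylor} gives $a^{-1}g-\chi(f)\in\fm^{\ord(f)+2k-2}$. Since $\chi(f)$ is right equivalent to $f$, it is contact $(2\tau(f)-\ord(f)+2)$-determined by Corollary~\ref{cor:finitedeterminacybound}. You need $\ord(f)+2k-2\ge 2\tau(f)-\ord(f)+3$, which holds once $k\ge\tau(f)-\ord(f)+3$. That follows from $k\ge 2(\tau(f)-\ord(f)+2)$ together with $\tau(f)\ge\ord(f)-1$. The right case is the same, using $g-f\in\fm^k j(f)$ directly and $\mu$ in place of $\tau$.

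\textbf{Comparison with the paper.} The paper's proof is only a reference: it takes the formal statement from \cite{GPh} and transfers it to $K\{x\}$ by Artin approximation, as in Proposition~\ref{prop.MY0}. Once patched, your argument is self-contained and runs entirely inside $K\{x\}$. The lift $\psi$ and the coordinate change $\chi$ are convergent. The only analytic input is Corollary~\ref{cor:finitedeterminacybound}, which the paper has already proved over an arbitrary real valued field via Grauert's Theorem~\ref{grauertapprox}. So the detour through $K[[x]]$ in your Step~4 is superfluous. You also do not need your claim that Artin's analytic approximation is insensitive to the characteristic; the theorem in \cite{Ar68} is formulated for valued fields of characteristic zero. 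As a by-product, the quadratic Taylor error shows that ii)$\Rightarrow$i) already holds for $k\ge\tau(f)-\ord(f)+3$ (resp.\ $k\ge\mu(f)-\ord(f)+3$). What the paper's route buys in exchange is brevity, since all the bookkeeping is deferred to the published formal proof.
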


We refer  to the proof for formal power series given in \cite{GPh}, Theorems 2.2 and 2.4 and Corollaries 2.3 and 2.5, which carries over to $K\{x\}$  {by using Artin's approximation theorem as in the proof of Proposition \ref{prop.MY0}}. We get in particular the simple bounds $k\ge2\tau(f)$ resp. $k\ge2\mu(f)$.

We mention that in \cite[Theorems 2.2, resp. Theorems 2.4]{GPh} the following better bounds for $k$  {in 1. ii) resp 2. ii)} are given:
 $$\mathfrak{m}^{\left\lfloor {\frac{{k + 2\ord(f)}}{2}} \right\rfloor}\subset\mathfrak{m}\left\langle f \right\rangle +\mathfrak{m}^2{{j}}(f)
\text{ resp. } \mathfrak{m}^{\left\lfloor {\frac{{k + 2\ord(f)}}{2}} \right\rfloor}\subset\mathfrak{m}^2{ {j}}(f),$$
\noindent where ${\left\lfloor {\frac{{k + 2s}}{2}} \right\rfloor}$ means the maximal integer which does not exceed $\frac{{k + 2s}}{2}$.


\section{Versal Deformations and the Splitting Lemma}\label{sec.split}

Let $K$ be again a real valued field and $K\{x\}$, $x= (x_1,...,x_n)$, the convergent power series ring over $K$. As an application of the Approximation Theorem \ref{grauertapprox} we prove first (in Theorem \ref{thm.suunf}) the existence of a semiuniversal unfolding of an isolated hypersurface singularity $f \in \fm^2 \subseteq K\{x\}$, $\fm =\langle x\rangle$ the maximal ideal. This, and the determinacy bound for isolated singularities (Theorem \ref{thm:finitedeterminacybound}), is used to prove the splitting lemma for any (not necessarily isolated) $f \in K\{x\}$ (see Theorem \ref{thm.split}).
\medskip

Let us recall the definition of a (semiuni-) versal deformation of an analytic algebra. For a geometric formulation for complex germs see \cite{GLS25}, and \cite{St03} for general cofibred groupoids.

\begin{Definition} \label{def.def}Let $R=K\{x\}/I$ and $T=K\{t\}/J$ be two analytic $K$-algebras, $x=(x_1,...,x_n)$, $t=(t_1,...,t_q)$ with $I,J$ ideals and $K$ a real valued field. 
\begin{enumerate}
   \item  
   A {\em deformation of $R$ over $T$} is a flat morphism $\phi: T \to \kr$ of analytic $K$-algebras together with an isomorphism 
$\kr \otimes_T T/\langle t \rangle \to R$. The algebras $\kr$ (resp. $T$, resp. $\kr/\langle t \rangle \kr \cong R$) are called the (algebras of the) {\em total space} (resp. {\em base space}, resp. {\em special fiber}) of the deformation. We denote a deformation of $R$ by 
$(\phi,\iota): T\to \kr \to R$, with $\iota :\kr \to R$ the canonical projection, or just by $\phi: T\to \kr$. 
   \item 
   Let $(\phi',\iota'): T'\to \kr' \to R$ be a second deformation of $R$ over the analytic $K$-algebra $T'$. A {\em morphism} from $(\phi',\iota')$ to $(\phi,\iota)$ 
is given by a pair of morphisms $(\psi: \kr \to \kr', \varphi : T\to T')$
such that $\iota = \iota' \circ \psi$ and $\psi \circ \varphi = \phi' \circ \varphi$. Two deformations over the {\em same} base space $T$ are {\em isomorphic}  if there exists a morphism  $(\psi,\varphi)$ with $\psi$ an isomorphism and $\varphi = \id_T$.  
\item 
Let $(\phi,\iota): T\to \kr \to R$ be a deformation of $R$ over $T$, 
$\varphi : T\to S$ a morphism of analytic algebras, and let the lower square of the commutative diagram
$$
\begin{xymatrix}{
&R&\\
 \kr \hat\otimes_T S \ar[ur]^{\varphi^*\iota}  && \kr \ar[ll]_{\tilde\varphi} \ar[ul]_\iota \\
S \ar[u]^{\varphi^*\phi} && T \ar[ll]^\varphi \ar[u]_\phi  }
\end{xymatrix}
$$
be the analytic pushout\,\footnote{\,The analytic pushout can be constructed as the analytic tensor product $\kr \hat\otimes S$ (see \cite[Kapitel III, § 5]{GR71})
modulo the ideal generated by $\{(\phi(t)r,s)-(r,\varphi(t)s), t\in T, r \in \kr, s \in S\}$. It is the algebraic counterpart to the geometric pullback or fiber product.} of $\phi$ and $\varphi$. 

Then the morphism $\varphi^*\phi$ is flat (see e.g. \cite[Proposition 1.1.87]{GLS25}) and
there is a natural map 
$\varphi^*\iota:  \kr \hat\otimes_T S \to R$  such that 
$$(\varphi^*\phi, \varphi^*\iota) : S \to \kr \hat\otimes_T S \to R$$
is a deformation of $R$ over $S$. 
$(\varphi^*\phi, \varphi^*\iota)$ is called the 
{\em deformation induced from $(\phi, \iota)$ by $\varphi$}, and 
$\varphi$ is called the {\em base change map}.
$(\tilde \varphi,  \varphi)$ is a 
morphism of deformations from $(\varphi^*\phi, \varphi^*\iota)$ to
$(\phi, \iota)$. 
  \item    
  A deformation $(\phi,\iota): T\to \kr \to R$ of $R$ over $T$ is called {\em complete} if any deformation $(\psi,j): S\to \kq\to R$ of $R$ over some analytic algebra $S$ can be induced from $(\phi,\iota)$ by some base change map $T \to S$   (up to isomorphism of deformations of $R$ over $S$).
\item
 The deformation $(\phi,\iota)$ of $R$ over $T$ is called {\em versal} if it is complete and if the following lifting property holds:\\
Let  $(\psi,j)$ be a given deformation of $R$ over $S$. Let $k: S\to S'$ a surjection and $\varphi': T\to S'$ a morphism of analytic algebras, such that 
the induced deformations  $(\varphi'^* \phi, \varphi'^* \iota)$ and 
$(k^*\psi,k^*j)$ over $S'$ are isomorphic. Then there exists a morphism $\varphi: T \to S$ such that $k\circ \varphi = \varphi'$ 
and $(\psi,j) \cong (\varphi^* \phi, \varphi^* \iota)$.
\item 
A versal deformation is called {\em semiuniversal} or {\em miniversal} if, with the notations from 5.,  the cotangent map of
$\varphi$, $\dot\varphi: \fm_T/\fm_T^2 \to \fm_S/\fm_S^2$, 
 {$\fm_T, \fm_S$ the maximal ideals of $T, S$,}
is uniquely determined by $(\phi,\iota)$ and $(\psi,j)$. 
\end{enumerate}
\end{Definition}
Versality of $(\phi,\iota)$ means that $(\phi,\iota)$ is not only complete but in addition that any deformation $(\psi,j)$ can be induced from $(\phi,\iota)$ by a base change that  extends a given base change  inducing  $(k^*\psi,k^*j)$ from 
 $(\phi,\iota)$. $(\phi,\iota)$ is called {\it formally versal} if the lifting property 5. holds for Artinian analytic algebras $S$ and $S'$.
 
 Property 5. is needed to construct a versal deformation (completeness is not sufficient): starting from the trivial deformation $T=K \to \kr =R$
 one extends this to bigger and bigger Artinian base spaces  {(in a non-unique way)}. This can be done since the so called 'Schlessinger conditions'  (a kind of formal version of the lifting property 5.) hold for the deformation functor. In the limit one gets finally to a formal object, which is formally versal. \\
 
 The definition of versality applies also to unfoldings.
The classical notion of unfolding in the form we need, is as follows:

\begin{Definition} \label {def.unf}
 Let $f \in \fm \subseteq K\{x\}$. 
 \begin{enumerate}
 \item A {\em (p-parameter) unfolding} of $f$ is a power series 
 $F \in K\{x,s\}$, $s= (s_1,...,s_p)$, such that $F(x, 0) =f$. 
If $F(x,s) = f(x)$ we say that $F$ is the {\em constant unfolding} of $f$. $K\{s\}$ is called (the algebra of) the {\em parameter space} of $F$.
  \item Let $F \in K\{x,s\}$ and $G \in K\{x,t\}$, $t = (t_1,...,t_q)$,
 be two unfoldings of $f$. 
 A {\em right-left morphism from $G$ to $F$} is given by a tripel
 $(\Phi, \phi,\lambda),$ with
 \begin{enumerate}
  \item [(i)] $\Phi : K\{x,t\} \to K\{x,t\}$ a morphism, satisfying\\
  $\Phi(x_i) =: \Phi_i \in K\{x,t\}$, $\Phi_i(x,0) = x_i$, $i=1,...,n$,\\
  $\Phi(t_j) = t_j, \ j=1,...,q$,
 \item [(ii)] $\phi : K\{s\} \to K\{t\}$, a morphism, $\phi(s_j) =: \phi_j(t) \in K\{t \}, \ j=1,...,p$,   and   
\item [(iii)] $\lambda \in  K\{y,t\}$, $y=(y_1)$, $\lambda(y,0) = y$,   such that 
 $$\Phi(G)(x,t)= G(\Phi(x,t),t)  = \lambda(F(x,\phi(t)),$$
with $\Phi(x,t):=(\Phi_1(x,t),...,\Phi_n(x,t))$ and $\phi(t):=(\phi_1(t),...,\phi_p(t))$.
   \end{enumerate}
 If this holds, we say that  $G$ is {\em right-left induced}  from $F$.  If $\lambda$ is a translation, that is $\lambda(y,t) = y + \alpha(t),  \alpha \in K\{t\}, \alpha(0)=0$, then $G$ is called {\em right induced}  from $F$.
\item  An unfolding $F$ of $f$ is called {\em  right-left complete}  (resp. {\em right complete})\,\footnote{In the literature also "versal"
is used instead of "complete". We prefer to use versal only in the sense of Definition \ref{def.def}.}  if any unfolding of $f$ is induced from $F$ by a suitable right-left (resp. right) morphism. \\
$F$ is called {\em right-left versal} (resp. {\em right versal}) if in addition the lifting property from Definition \ref{def.def} (5) holds for right-left (resp. right) induced unfoldings.
A versal unfolding is called {\em semiuniversal} or {\em miniversal} if the cotangent map of the base change map is uniquely determined (see Definition \ref{def.def} (5)). 
 \end{enumerate}
 \end{Definition} 

\begin{Remark}\label{rm.unf}{\em
Since the Jacobian matrix of $\Phi$ at 0 is the identity in Definition \ref{def.unf}, $\Phi$ is in fact an automorphism of $K\{x,t\}$. Hence, if $G$ is right-left induced from $F$, then $G(x,t)$ is right-left equivalent to 
$H(x,t)$, where $H(x,t) := F(x,\phi(t))$ is obtained from $F$ by the base change $\phi$.\\
Usually the notion right equivalence is used if the translation is trivial, that is, $\alpha=0$, but the case  of a non-trivial translation is used in the proof of Theorem \ref{thm.split}.
The  translation $\alpha$  is  introduced to take care of the constant terms of $ F(x,{\phi(t)})$ for varying $t$. }
 \end{Remark}

\begin{Theorem}\label{thm.suunf}
Let $f\in \fm^2 \subseteq K\{x\}$ and assume that $\dim_K\fm/\langle\frac{\partial f}{\partial x_1},\ldots,\frac{\partial f}{\partial x_n}\rangle<\infty$.
Let $g_1,\ldots,g_p\in \fm$ be representatives of a generating system (resp. a basis) of $\fm/\langle\frac{\partial f}{\partial x_1},\ldots,\frac{\partial f}{\partial x_n}\rangle$ 
and $s=(s_1,\ldots,s_p)$ new variables.
Then 
$$F(x,s) := f(x)+\sum_{i=1}^pg_i(x)s_i\in K\{x,s\}$$ is a right-versal (resp. right-semiuniversal) unfolding  of $f$.
\end{Theorem}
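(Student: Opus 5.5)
We first prove completeness: every unfolding $G\in K\{x,t\}$ of $f$ is right induced from $F$, i.e.\ there are $\Phi$, $\phi$ and a translation $\alpha$ with
$$G(\Phi(x,t),t)=F(x,\phi(t))+\alpha(t).$$
We treat this as an equation in unknowns $Y=(\phi,\alpha)\in K\{t\}^{p+1}$ and $Z=\Phi\in K\{x,t\}^n$. We apply Grauert's Approximation Theorem \ref{grauertapprox} with parameters $s:=t$, $I=0$, $e_0=0$, and the trivial order-$0$ solution $\Phi=x$, $\phi=0$, $\alpha=0$. The latter is valid because $G(x,0)=f=F(x,0)$. So we must show that every solution of order $e$ with these initial values extends to a solution of order $e+1$ by corrections that are homogeneous of degree $e+1$ in $t$.

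For the inductive step, suppose
$$G(\Phi^{(e)},t)-F(x,\phi^{(e)})-\alpha^{(e)}\equiv R \mod \fm_t^{e+2},$$
where $R=\sum_{|\beta|=e+1} r_\beta(x)t^\beta$ with $r_\beta\in K\{x\}$. We write each coefficient as
$$r_\beta=c_\beta+\sum_i a_{\beta,i}\,\partial f/\partial x_i+\sum_j b_{\beta,j}\,g_j,\qquad c_\beta\in K.$$
This is possible because $K\{x\}=K+\fm$ and the $g_j$ generate $\fm$ modulo $j(f)$. This is exactly why the hypothesis is on $\fm/j(f)$ and why the translation $\alpha$ appears. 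Now set
$$\Phi^{(e+1)}_i=\Phi^{(e)}_i-\sum_\beta a_{\beta,i}t^\beta,\quad \phi^{(e+1)}_j=\phi^{(e)}_j+\sum_\beta b_{\beta,j}(0)\,t^\beta,\quad \alpha^{(e+1)}=\alpha^{(e)}+\sum_\beta c_\beta t^\beta .$$

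The problem is the summand $b_{\beta,j}(x)g_j$: its coefficient $b_{\beta,j}$ is a function of $x$, not a constant, so it cannot be absorbed into $\phi$. To fix this, we first choose the decomposition more carefully. We write $b_{\beta,j}=b_{\beta,j}(0)+(\text{element of }\fm)$, and the element of $\fm$ times $g_j$ again lies in $\fm$, so it can be re-expanded. Iterating, the coefficients $b$ can be taken in $K$. Concretely, the finite-dimensionality gives $\fm=\sum_j Kg_j+j(f)$, since the $g_j$ represent a $K$-generating system of the finite-dimensional space $\fm/j(f)$. So from the outset we may take $b_{\beta,j}\in K$ and $a_{\beta,i}\in K\{x\}$.

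It remains to check, via Taylor's formula (Remark \ref{rm.taylor}), that these corrections kill $R$ modulo $\fm_t^{e+2}$. The corrections have $t$-order $e+1\ge 1$, and in $G(\Phi^{(e)}-a,t)$ the linear term is $-\sum_i a_i\,\partial G/\partial x_i(\Phi^{(e)},t)$. Modulo $\fm_t^{e+2}$ this equals $-\sum_i a_i\,\partial f/\partial x_i(x)$, because $\Phi^{(e)}\equiv x$ and $G\equiv f$ mod $\fm_t$. The quadratic terms have $t$-order $\ge 2(e+1)\ge e+2$. Likewise $F(x,\phi^{(e+1)})=F(x,\phi^{(e)})+\sum_j b_j t^\beta g_j$ exactly, since $F$ is linear in $s$. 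Here we use only the Taylor formula in the positive characteristic form, so no characteristic assumption is needed. Grauert's theorem then yields convergent $\Phi,\phi,\alpha$. $\Phi$ is an automorphism by Remark \ref{rm.unf}.

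For versality (the lifting property), we run the same argument with $e_0$ chosen so that the given data $\varphi':K\{s\}\to K\{t\}/J'$, i.e.\ the prescribed solution modulo the kernel of $k$, serve as the starting solution. More precisely, we apply Grauert's theorem with the ideal $I$ equal to the kernel of the surjection $S\to S'$, presenting $S=K\{t\}/J$. The obstruction step is identical: we work modulo $(I+\fm^{e+2})$, and the decomposition of the error coefficients is unchanged. We expect this bookkeeping with the ideal $I$ to be the main technical obstacle, namely ensuring that the order-$e$ solutions compatible with $\varphi'$ extend. The argument above is linear in the error term, so it goes through verbatim modulo $I$.

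For semiuniversality, when the $g_j$ form a basis, note that the cotangent map $\dot\phi$ is forced. Comparing terms linear in $t$, the coefficient of $t_k$ in $G$ minus $f$ must equal $\sum_j \partial\phi_j/\partial t_k(0)\,g_j$ modulo $j(f)+K$. Since the $g_j$ form a basis of $\fm/j(f)$, these coefficients are unique.
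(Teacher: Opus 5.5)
Your completeness argument is essentially the paper's proof. It uses the same equation $G(\Phi(x,t),t)=F(x,\phi(t))+\alpha(t)$, Grauert's Theorem~\ref{grauertapprox} with $e_0=0$ and the trivial order-$0$ solution, and the same extension step via $K\{x\}=K\cdot 1+\sum_j Kg_j+j(f)$ (the paper's $g_0:=1$) followed by Taylor's formula. The detour about iteratively re-expanding $(b_{\beta,j}-b_{\beta,j}(0))g_j$ is unnecessary and would itself need a convergence argument; the linear-algebra fact you state right afterwards is what is actually used. Your cotangent computation for the uniqueness of $\dot\phi$ is also correct, since $K\{x\}/(K+j(f))\cong\fm/j(f)$ for $f\in\fm^2$; the paper does not write this out.

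The paper proves only completeness and defers the lifting property to \cite{GP25a}, and your versality paragraph does not supply it. Theorem~\ref{grauertapprox} only yields an analytic solution congruent to the initial one modulo $\fm^{e_0+1}$. The lifting property, however, demands $k\circ\varphi=\varphi'$, i.e.\ congruence with a lift of the given data modulo $J'$, where $S'=K\{t\}/J'$ and $J\subseteq J'$. Choosing $e_0$ enforces this directly only when $J'=J+\fm^{e_0+1}$.

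Moreover, taking $I$ to be (the preimage of) $\ker(S\to S')$ is the wrong ideal. That preimage is $J'$, and solving modulo $J'$ merely reproduces the given data over $S'$. One must solve modulo $J$, and then the given data is in general not even a solution of order $e_0$: it satisfies the equation only modulo $J'$, not modulo $J+\fm^{e_0+1}$.

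Your observation that the extension step works verbatim modulo any ideal $I$ does give two things: completeness over arbitrary bases $K\{t\}/J$ (take $I=J$), and the lifting property for the special surjections $S\to S/\fm_S^{e_0+1}$. For a general surjection $k$, the constraint has to be built into the unknowns, e.g.\ $\phi=\tilde\phi'+\sum_l\gamma_lY_l$ (and similarly for $\Phi$, $\alpha$) with the $\gamma_l$ generating $J'$. The corrections then enter multiplied by the $\gamma_l$, so their $t$-degree no longer matches that of the error term, and the step is not the linear one you describe. Since semiuniversality includes versality, the ``resp.'' part of your argument inherits this gap.
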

 \begin{proof}
We only prove completeness, following \cite{GP25a} (using Grauert's Approximation Theorem \ref{grauertapprox}); versality can be proven in a similar way, but is more complicated in terms of notation.

 Let $G(x,t)\in K\{x,t\} $ be an arbitrary unfolding of $f$, $t=(t_1,\ldots,t_q)$.
 We have to prove that there exists $\phi(t)\in \langle t\rangle K\{t\}$, an automorphism  $\Phi$ of $K\{x,t\}$, $\Phi = (\Phi_1,...,\Phi_n)$, $\Phi_i = \Phi(x_i)$,
with $\Phi_i(x,0) = x_i$,
$\Phi(t_j) = t_j$, and $\alpha \in K\{t\}$ with  $\alpha(0)=0$,
such that
\begin{equation}\label{def1}
\Phi(G)=G(\Phi(x,t),t) = F(x,\phi(t)) +\alpha(t).
\end{equation}
A solution of order $e$ (in $t$) of (\ref{def1}) is a triple $(\Phi,\phi,\alpha) \in K\{x\}[t]^n \times K[t]^p  \times K[t]$  such that 
(\ref{def1}) holds mod $\langle t \rangle^{e+1}= \langle t \rangle^{e+1}K\{x,t\}$.

Since $\Phi(x,0) =x$, $\phi(0)= 0$, $\alpha(0)=0$  and $G(x,0)=f(x)=F(x,0)$ we obviously have a solution of (\ref{def1}) of order $0$. 
To apply Theorem \ref{grauertapprox}, we have to show that every solution 
$(\Phi,\phi,\alpha)$ of $(\ref{def1})$ of order $e$ in $t$ can be extended to a solution
$(\Phi',\phi',\alpha')$ of $(\ref{def1})$ of order $e+1$. 

Having the solution $(\Phi,\phi,\alpha)$ of order $e$, the difference
$G(\Phi(x,t),t) - F(x,\phi(t)) - \alpha(t)$ is a power series of order $\ge e+1$ in $t$. The homogeneous part of degree $e+1$ in $t$ of
$G(\Phi(x,t),t) - F(x,\phi(t)) -\alpha(t)$ can thus be written as
\[
 G(\Phi(x,t),t) - F(x,\phi(t)) -\alpha(t) = \sum_{| \nu | = e+1}t^{\nu}h_{\nu}(x) \mod \langle t\rangle^{e+2}
\]
with $h_{\nu} \in K\{x\}$. 
Define $g_0:=1$. By assumption $g_0,...,g_p$ generate $K\{x\}/\langle\frac{\partial f}{\partial x_1},\ldots,\frac{\partial f}{\partial x_n}\rangle$ and therefore we can write for all $\nu$ 
\[
h_{\nu} =  \sum_{j=0}^{p}b_{j \nu}g_j + \sum_{j=1}^nh_{\nu j}
\frac{\partial f}{\partial x_j}
\]
with $b_{j \nu } \in K$, $j \ge 1$,  and $h_{\nu j} \in K\{ x\}$.
We define 
\begin{align*}
\Phi'_j &= \Phi_j - \sum_{|\nu | = e+1}{}{h_{\nu j}}t^\nu \; \quad
j=1,\ldots,n\\
\phi'_j &= \phi_j + \sum_{|\nu | = e+1}b_{j \nu}t^\nu\;\quad j = 1, \ldots, p,\\
\alpha' &= \alpha +\sum_{|\nu | = e+1}b_{0 \nu}t^\nu.
\end{align*}
As $G(x,0)= f$ and $\Phi(x,0)=x$ we obtain 
$$\frac{\partial{(G(\Phi(x,t),t))}}{\partial{x_j}}\equiv \frac{\partial f}{\partial x_j} \mod \langle t\rangle .$$
This implies using Taylor's formula (see Remark \ref{rm.taylor})
\begin{align*}
G(\Phi'(x,t),t)&=  G(\Phi(x,t),t)-\sum_{j=1}^n \frac{\partial G}{\partial x_j}(\Phi(x,t),t)(\sum_{|\nu | = e+1}{}{h_{\nu j}}t^\nu) \mod \langle t\rangle^{e+2}\\
&=  G(\Phi(x,t),t)-\sum_{|\nu | = e+1}(\sum_{j=1}^nh_{\nu j}\frac{\partial f}{\partial x_j} )t^\nu \mod \langle t\rangle^{e+2}.
\end{align*}
On the other hand, by Taylor and the definition of $F$,
\begin{align*}
F(x,\phi'(t))& = F(x,\phi(t)) +\sum_{j=1}^p(\frac{\partial F}{\partial s_j}(x,\phi(t))(\sum_{|\nu | = e+1}b_{j \nu}t^\nu) \mod \langle t\rangle^{e+2}\\
&=F(x,\phi(t))+\sum_{|\nu | = e+1}(\sum_{j=1}^p b_{j\nu}g_j)t^\nu \mod \langle t\rangle^{e+2}.
\end{align*}
We obtain
$$G(\Phi'(x),t)-F(x,\phi'(t))-\alpha'(t)\equiv 0 \mod \langle t\rangle^{e+2}.$$
Now we can apply Theorem \ref{grauertapprox} to obtain an analytic solution of (\ref{def1}).
This proves the theorem.
\end{proof}

\begin{Remark}\label{rm.unf}{\em
We can of course define unfoldings $G$ of $f$ over arbitrary parameter spaces $K\{t\}/J$, $J\subseteq K\{t\}$ an ideal, as  an element
$G(x,t)\in (K\{t\}/J)\{x\} $ with $G(x,0)=f$.\\
 Then the power series $F$ from Theorem \ref{thm.suunf} is also right-versal for such unfoldings $G$:
Namely, let $G(t,x)\in (K\{t\}/J)\{x\} $ be an unfolding of $f$, $t=(t_1,\ldots,t_q)$. 
 We have to prove that there exists $\phi(t)\in \langle t\rangle$,
an automorphism  $\Phi$ of $(K\{t\}/J)\{x\}$  with $\Phi(x)_{t=0} = x$ and
$\Phi(t) = t$ and
an $\alpha \in K\{t\}$ with  $\alpha(0)=0$, satisfying
\begin{equation}\label{def2}
 \Phi(G)= G(\Phi(x,t),t)  = F(x,\phi(t)) +\alpha(t).
\end{equation}
To see this, we take a representative of $G(t,x)$ in $K\{t,x\}$ which  is an unfolding of $f$ over $K\{t\}$. We apply  Theorem (\ref{thm.suunf}) to this representative and pass then to  $(K\{t\}/J)\{x\} $.
}
\end{Remark}

We formulate now the general splitting lemma for power series (not necessarily with an isolated singularity) in $K\{x\}$ from \cite{GP25a}. Note that  the rank of the Hessian matrix $H(f):=\Big( \frac{\partial^2 f}{\partial x_i\partial x_j}(0)\Big)_{i,j=1,...,n}$ is invariant under right equivalence if $f \in \fm^2$.

\begin{Theorem}[Splitting lemma in any characteristic]
\label{thm.split}
Let $K$ be a real valued field and $f \in \fm^2 \subseteq K\{x\}$.
\begin{enumerate}
\item Let $\Char(K) \ne2$ and   $\rank H(f)=k$. Then 
 $$ f\ \rsim \ a_1x_1^2+\ldots+a_kx_k^2+g(x_{k+1},\dots,x_n) $$
   with $a_i \in K$,  $a_i \ne 0$, and
$g\in\langle x_{k+1},...,x_n\rangle^3$.
   $g$ is called the {\em residual
    part\/}\index{residual part} of $f$, it is uniquely determined up to   right equivalence in $K\{x_{k+1},...,x_n\}$.   
\item Let $\Char(K)=2$ and $\rank H(f)=2l$\ \footnote{In characteristic 2 the rank of the Hessian matrix is even.}. Then $f$ is right equivalent to 
$$ \sum_{i \text{ odd, } i=1}^{2l-1}(a_i x_i^2 + x_ix_{i+1} +
a_{i+1} x_{i+1}^2) + \sum_{i=2l+1}^{n}d_i x_i^2 + h(x_{2l+1},...,x_n),
$$
with  $a_i, d_i \in K$, $h\in \langle x_{2l+1},\ldots,x_{n} \rangle^3$. 
$g:=  \sum_{i=2l+1}^{n}d_i x_i^2  + h(x_{2l+1},\ldots,x_{n})$ is called the {\em residual part} of $f$, it is uniquely determined up to right equivalence in $K\{x_{2l+1},\ldots,x_{n}\}$.
\end{enumerate} 
\end{Theorem}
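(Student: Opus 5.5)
We prove existence and uniqueness separately, and treat both characteristics in parallel.

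\emph{Existence.} Diagonalize, resp.\ in characteristic $2$ bring to symplectic normal form, the quadratic part by a linear change of coordinates. This gives $f = q(x') + f_3(x)$, where $x'=(x_1,\dots,x_k)$ (resp.\ $k=2l$), $q$ is nondegenerate in $x'$ (either $\sum a_ix_i^2$ or $\sum(a_ix_i^2+x_ix_{i+1}+a_{i+1}x_{i+1}^2)$), and $\ord f_3\ge 3$ apart from the diagonal terms $d_ix_i^2$ with $i>k$ in characteristic $2$. Write $x''=(x_{k+1},\dots,x_n)$. The matrix $\big(\partial^2 q/\partial x_i\partial x_j\big)_{i,j\le k}$ is invertible; in characteristic $2$ this is exactly the symplectic block form. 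By the implicit function theorem in $K\{x\}$ there is a unique $\psi(x'')\in\langle x''\rangle^2$ with $\partial f/\partial x_i(\psi(x''),x'')=0$ for $i\le k$. Substitute $x'\mapsto x'+\psi(x'')$. Then the new $f$ satisfies $f(x',x'')=f(0,x'')+Q(x',x'')$, where $Q\in\langle x'\rangle^2$ and its quadratic part in $x'$ at $x''=0$ is $q$. This follows from the Taylor formula of Remark \ref{rm.taylor}, since the linear terms in $x'$ vanish.

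Next we need a parametric Morse lemma: $Q(x',x'')\rsim q(x')$ by a coordinate change of the form $x'\mapsto \Psi(x',x'')$ that fixes $x''$. For this we view $Q$ as an unfolding of $q$ over the parameters $x''$ and apply Theorem \ref{thm.suunf} with $f=q$. Since $j(q)=\fm_{x'}$, we get $\fm/j(q)=0$, so $F=q$ is a complete unfolding with $p=0$. Hence $Q(\Phi(x',x''),x'')=q(x')+\alpha(x'')$. Setting $g:=f(0,x'')+\alpha(x'')$ (hence $\alpha=0$ since $Q(0,x'')=0$ forces evaluation at $x'=0$; in any case the term is absorbed into $g$) yields the normal form with $g\in\langle x''\rangle^3$. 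In characteristic $2$ the diagonal quadratic terms $d_ix_i^2$ stay inside $g$. Here $\Phi$ is an automorphism of $K\{x',x''\}$ by Remark \ref{rm.unf}.

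\emph{Uniqueness.} This is the main obstacle. Suppose $q(x')+g_1(x'')\rsim q(x')+g_2(x'')$ via some $\varphi$. The strategy is to reduce to the case where $g_1$ and $g_2$ have isolated singularities, using finite determinacy. Add $x_{k+1}^N+\dots+x_n^N$ with $N$ large and $\Char K\nmid N$; since $N$ is arbitrary, this changes nothing modulo high powers of $\fm$. The difficulty is that right equivalence of the sums must descend to the residual parts. First try a direct argument instead. Consider the critical locus condition $\partial/\partial x_i=0$ for $i\le k$, which defines $x'=\psi_j(x'')$ for each of the two sums; here $\psi_1=\psi_2=0$. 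The automorphism $\varphi$ maps the ideal $\langle\partial_{x'}(q+g_2)\rangle$ into some ideal which is not obviously the same one. So we instead use a linear-algebra trick. Compose $\varphi$ with a linear map so that its linear part preserves the decomposition; this is possible because $\ker H$ is intrinsic and corresponds to the $x''$-space. Then restrict to $x'=\chi(x'')$, where $\chi$ solves $\partial_{x'}(q+g_1)\circ\varphi=0$ via the implicit function theorem. Evaluating both sides on this section gives $g_1$ of an automorphism of $K\{x''\}$, up to a term $q(\cdot)$ that must be shown to be removable by the parametric Morse step again. Concretely, we compare $f(\chi(x''),x'')$, the ``value on the relative critical set'', which is an invariant equal to $g$ up to right equivalence in $x''$, since a relative critical section is mapped to a relative critical section by $\varphi$ after adjusting by the parametric Morse normalization. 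Carrying out this identification carefully, including the fact that the relative critical set is defined using the chosen splitting of coordinates, is the step I expect to require the tricky substitutions. If it resists, the fallback is: reduce to isolated $g_i$ by the determinacy bound of Corollary \ref{cor:finitedeterminacybound} and then compare via the Mather--Yau Theorem \ref{thm.MYp}, using $M_k(q+g)\cong M_k(g)$ as $K\{t\}$-algebras.
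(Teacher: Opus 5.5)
Your existence argument is essentially the paper's, but the uniqueness of the residual part is not proved: the idea you sketch does not work as stated, and the key step is missing. Consider the sections you propose: the relative critical set $x'=0$, or its preimage $x'=\chi(x'')$ under $\varphi$. On them the identity $f_0(\varphi)=f_1$ only yields $g_0(\tilde\varphi(x''))=g_1(x'')\pm q(h(x''))$, with $h=\varphi'(0,x'')$ resp.\ $h=\chi$. In general $h\neq0$, because $\varphi$ does not respect the projection to the $x''$-space and so does not map relative critical sets to relative critical sets. For example, the time-$t$ flow of $3x_2^2\partial_{x_1}-2x_1\partial_{x_2}$ preserves $x_1^2+x_2^3$ over $\C$, but $\varphi_1(0,x_2)=3tx_2^2+\dots\neq0$ for $t\neq0$.

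Nothing in your proposal removes $q(h)$. The parametric Morse step concerns the $x'$-dependence, not a perturbation of a series in $x''$ alone. And since $f$ is arbitrary, $g_0,g_1$ need not be finitely determined.

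The missing idea (Proposition \ref{prop.split}) is to choose the section on which the quadratic parts cancel \emph{identically}. Write $\varphi_i=l_i+k_i$ with $l_i$ linear and $k_i\in\fm^2$. Comparing $2$-jets gives $\sum a_il_i^2=\sum a_ix_i^2$, so $l_1,\dots,l_k$ are linear forms in $x'$ with invertible matrix, and $a_i\varphi_i^2=a_il_i^2+a_ik_iF_i$ with $F_i:=2l_i+k_i$. Solve $F_i(\psi(x''),x'')=0$ by the implicit function theorem and substitute $x'=\psi(x'')$ into $f_0(\varphi)=f_1$. This gives exactly $g_0(\varphi_{k+1}(\psi,x''),\dots,\varphi_n(\psi,x''))=g_1(x'')$, and $\langle F_1,\dots,F_k,\varphi_{k+1},\dots,\varphi_n\rangle=\fm$ shows that this substitution is an automorphism.

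In characteristic $2$ two further ingredients are needed, and you do not address this case. First, the uniqueness of quadratic forms is used to get $l_i=x_i$ for $i>2l$. Second, the equations are modified to $F_i=l_i+a_{i+1}k_{i+1}$ for $i$ odd and $F_i=l_i+k_i+a_{i-1}k_{i-1}$ for $i$ even, which kill the cross terms.

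Your fallback fails too. Theorem \ref{thm.MYp} needs $\mu<\infty$, which non-isolated residual parts violate. Moreover, $M_k(q+g)\cong M_k(g)$ is false for index $\ge1$: for $q=x_1^2$, $g=x_2^3$ and $\Char(K)\neq2,3$, the first Milnor algebras have dimensions $4$ and $3$. Also, you cannot in general make the linear part of $\varphi$ block diagonal by composing with a linear map without destroying the split form of $f_0$ or $f_1$. This step is not needed anyway.

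In the existence part, $Q(x',0)=f(x',0)$ equals $q$ only modulo $\langle x'\rangle^3$, so $Q$ is not an unfolding of $q$. There are two ways to fix this. As the paper does, first use that $q$ is $2$-determined (Theorem \ref{thm:finitedeterminacybound}) to achieve $f(x',0)=q$. Alternatively, apply Theorem \ref{thm.suunf} to $Q(x',0)$, whose Jacobian ideal is also $\langle x'\rangle$. Either way, your preliminary implicit-function step becomes superfluous. The claim $\alpha=0$ is false, since $\Phi(0,x'')\neq0$ in general, but it is harmless. That $\jet_2(g)$ vanishes (resp.\ is diagonal in characteristic $2$) follows from the invariance of $\rank H(f)$.
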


We formulate the uniqueness statement for the residual part explicitly and prove it separately (see also \cite[Theorem 2.1 and Theorem 3.5]{GP25}).
\begin{Proposition}\label{prop.split}
Let $f_0, f_1\in\fm^2\subseteq K\{x\}$ and assume that 
\begin{center}
$f_0 = q(x_1,...,x_{k})+g_0(x_{k+1},\dots,x_n) 
\rsim f_1 = q(x_1,...,x_{k})+g_1(x_{k+1},\dots,x_n),$
\end{center}
where
\begin{enumerate}
\item $\Char (K) \ne 2$:\\
$ q=a_1x_1^2+\ldots+a_kx_k^2, \ a_i \in K, a_i \ne 0$ and
$g_j(x_{k+1},\dots,x_n) \in \fm^3, \ j=0,1.$
  
\item$\Char(K) = 2$, \ $k=2l$:\\
$ q=\sum_{i \text{ odd, } i=1}^{k-1}(a_i x_i^2 + x_ix_{i+1} +
a_{i+1} x_{i+1}^2), \ a_i \in K,$ and
$g_j = \sum_{i=k+1}^{n}d_i x_i^2 + h_j, \ h_j\in \langle x_{k+1},\ldots,x_{n} \rangle^3, \ d_i \ne0, \ j=0,1.$
\end{enumerate}
Then $g_0 \rsim g_1$ in $K\{ x_{k+1},...,x_n\}$.
\end{Proposition}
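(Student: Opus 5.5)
Write $x'=(x_1,\dots,x_k)$ and $y=(x_{k+1},\dots,x_n)$. We are given $\varphi\in\Aut(K\{x\})$ with $f_1=\varphi(f_0)$. The plan is to restrict $\varphi$ to the subspace $y$ after first making it respect the splitting. Concretely, we want to change $\varphi$, without changing $f_0(\varphi)$, until it preserves the ideal $\langle x'\rangle$ modulo suitable corrections. The reduction goes as follows. Consider the critical locus of the quadratic part in the $x'$-directions, i.e. the ideal $J_j:=\langle \partial f_j/\partial x_1,\dots,\partial f_j/\partial x_k\rangle$. For $j=0,1$ we have $J_j=\langle \partial q/\partial x_1,\dots,\partial q/\partial x_k\rangle$, because $g_j$ does not depend on $x'$. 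This ideal equals $\langle x_1,\dots,x_k\rangle$: in case 1 since $2a_i\neq 0$, and in case 2 since the Jacobian of $q$ with respect to $x'$ is given by the blocks $\begin{pmatrix} x_{i+1}\\ x_i\end{pmatrix}$, which are invertible linear forms. Moreover $f_j \bmod \langle x'\rangle = g_j(y)$.

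The key step is to show that there is $\psi\in\Aut(K\{x\})$ with $f_1=\psi(f_0)$ and $\psi(\langle x'\rangle)=\langle x'\rangle$. Once this holds, $\psi$ induces an automorphism $\bar\psi$ of $K\{x\}/\langle x'\rangle\cong K\{y\}$, and reducing $f_1=\psi(f_0)$ modulo $\langle x'\rangle$ gives $g_1=\bar\psi(g_0)$, which is the claim. To construct $\psi$, I would not work with the partial Jacobian ideal directly, since it is not invariant under $\varphi$. Instead I would use a parametric version: treat $y$ as parameters and regard $f_j$ as an unfolding in $x'$ of the nondegenerate quadratic form $q$. Alternatively, and more cleanly, I would try to write $\varphi=\varphi_2\circ\varphi_1$, where $\varphi_1$ fixes $f_0$ and $\varphi_2$ preserves $\langle x'\rangle$.

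For the decomposition I would use the splitting lemma in parametric form, i.e. the existence part of Theorem~\ref{thm.split}, applied with parameters. The argument runs as follows. Set $\varphi(x')=:u(x',y)$ and $\varphi(y)=:v(x',y)$. Solve $\partial f_1/\partial x'=0$ for $x'=\xi(y)$ by the implicit function theorem; this is possible because the $x'$-Hessian of $f_1$ is nondegenerate. Do the same for $f_0$. Since $\varphi$ maps the relative critical set of $f_0$ over its $y$-projection only up to the mixing of coordinates, this is where the difficulty lies. The substitute I would try first is an explicit trick substitution: consider $\tilde\varphi(x',y):=\varphi(x',y)$ and the composite $y\mapsto v(\xi(y),y)$ restricted to the locus $x'=\xi(y)$. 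The value of $f_1$ on its $x'$-critical locus is $g_1(y)$, because $\xi=0$ for $f_1$. The value of $f_0$ on its critical locus is $g_0(y)$. Critical values are transported by $\varphi$ only if $\varphi$ maps one critical locus to the other, which fails in general. So instead I would compare via Morse-type uniqueness: after completing squares, $f_j\rsim q(x')+g_j(y)$ relative to $y$, and one needs $\varphi$ to preserve the fibration by $y$.

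The main obstacle is therefore exactly this: an arbitrary right equivalence mixes $x'$ and $y$. I would handle it by an infinitesimal (Grauert approximation) argument. Look for $\psi$ of the form $\psi=\varphi\circ\chi$ with $\chi$ fixing $f_0$, and solve for $\chi$ order by order in $\fm$, using Theorem~\ref{grauertapprox}, so that $\psi(x_{k+1},\dots,x_n)$ has no component that fails to vanish modulo $\langle x'\rangle$. In other words, we require $\psi(x')\in\langle x'\rangle$. At each order the obstruction is a vector field that must be absorbed by vector fields annihilating $f_0$. The Hamiltonian-type fields $\partial_{x_i}f_0\,\partial_{x_j}-\partial_{x_j}f_0\,\partial_{x_i}$ with $i\le k$ generate enough of these, precisely because the $\partial f_0/\partial x_i$, $i\le k$, generate $\langle x'\rangle$ (this is where $a_i\neq0$, respectively the block structure in characteristic $2$, is essential). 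I expect the verification that these fields kill the $\langle x'\rangle$-violating part at each order to be the hardest computation.
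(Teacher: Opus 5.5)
Your reduction is correct but empty: an automorphism $\psi$ with $\psi(f_0)=f_1$ and $\psi(\langle x'\rangle)=\langle x'\rangle$ exists if and only if the statement holds (given $\sigma(g_0)=g_1$, extend $\sigma$ by the identity on $x'$). So everything rests on the step you leave open, and the mechanism you propose for it does not work as described.

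A vector field $\xi$ with $\xi(f_0)=0$ can only move $\{x'=0\}$ in normal directions whose coefficients lie in the Jacobian ideal $j(g_0)\subseteq K\{y\}$. To see this for $\Char(K)\ne 2$, differentiate $\xi(f_0)=0$ with respect to $x_m$, $m\le k$, and set $x'=0$; this gives $2a_m\,\xi(x_m)|_{x'=0}\in j(g_0)$. Your fields $\partial_{x_i}f_0\,\partial_{x_j}-\partial_{x_j}f_0\,\partial_{x_i}$ ($i\le k$) illustrate this. The coefficient $\partial_{x_i}f_0=2a_ix_i$ vanishes on $\{x'=0\}$, so only the term $-\partial_{x_j}g_0\,\partial_{x_i}$ ($j>k$) moves $\{x'=0\}$, again in a $j(g_0)$-direction. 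The fact that the $\partial f_0/\partial x_i$, $i\le k$, generate $\langle x'\rangle$ is exactly why they do not help.

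Since $g_0$ is not assumed isolated, $j(g_0)$ may have infinite codimension (e.g. $g_0=0$). Indeed, $f_0$ restricted to a graph $x'=\eta(y)$ is $g_0+q(\eta(y))$, which is in general not right equivalent to $g_0$. An order-by-order argument would therefore have to prove, using $\varphi(f_0)=f_1$ at every order, that each obstruction lies in $j(g_0)$. It would also have to verify Grauert's hypothesis that \emph{every} solution of order $e$ extends. Moreover, in characteristic $p>0$ vector fields cannot be exponentiated, so each correction must be built directly as an automorphism. None of this is carried out, so the proposal contains no proof of the key step.

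The missing idea is that no approximation argument is needed. One restricts the identity $\varphi(f_0)=f_1$ to a suitable graph over $y$, which is neither $\{x'=0\}$, nor $\varphi^{-1}(\{x'=0\})$, nor a relative critical locus. Write $\varphi(x_i)=l_i+k_i$ with $l_i$ linear and $k_i\in\fm^2$. For $\Char(K)\ne2$, comparing degree-two parts gives the exact identity $\sum_{i\le k}a_il_i^2=q(x')$. Hence $\sum_{i\le k}a_ik_i(2l_i+k_i)+g_0(\varphi_{k+1},\dots,\varphi_n)=g_1(y)$ holds exactly. Differentiating $\sum a_il_i^2=q$ gives $2a_ix_i\in\langle l_1,\dots,l_k\rangle$, so $(\partial l_i/\partial x_j)_{i,j\le k}$ is invertible. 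The implicit function theorem then yields $x'=\psi(y)$ on which $F_i:=2l_i+k_i=0$; there $\varphi_i=-l_i$, so $q(\varphi_1,\dots,\varphi_k)$ and $q(x')$ coincide. Substituting kills the first sum and gives $g_0(\varphi'_{k+1},\dots,\varphi'_n)=g_1(y)$ with $\varphi'_i=\varphi_i(\psi(y),y)$. This $\varphi'$ is an automorphism because $(F_1,\dots,F_k,\varphi_{k+1},\dots,\varphi_n)$ is one. In characteristic $2$ the paper proceeds analogously, first normalizing the linear parts of $\varphi_{2l+1},\dots,\varphi_n$. It takes $F_i=l_i+a_{i+1}k_{i+1}$ for $i$ odd and $F_i=l_i+k_i+a_{i-1}k_{i-1}$ for $i$ even. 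These are chosen so that the part of order $\ge 3$ of $q(\varphi_1,\dots,\varphi_{2l})$ lies in $\langle F_1,\dots,F_{2l}\rangle$.
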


If the field $K$ is algebraically closed, the coefficients $a_i$ and $d_i$ can be made to $1$. More precisely, we have

\begin{Corollary} \label{cor.split}
Let $f$ be as in Theorem \ref{thm.split}.
\begin{enumerate}
\item Let $char(K)\ne 2$ and assume that $K$ coincides with its subfield $K^2$ of squares. Then, with $k=\rank H(f),$ 
  $$ f\ \rsim x_1^2+\ldots+x_k^2+g(x_{k+1},\dots,x_n). $$
$g$ is uniquely determined up to  right equivalence in $K\{x_{2l+1},\ldots,x_{n}\}$. 
\item  Let $char(K) = 2$ and assume that quadratic equations are solvable in $K$. Then  $f$ is right equivalent to one of the following normal forms, with $g$ unique up to right equivalence in $K\{x_{2l+1},\ldots,x_{n}\}$, $2l=\rank H(f)$:
$$
\begin{array}{llll}
(a) & x_1x_2+x_3x_4+\ldots+x_{2l-1}x_{2l}+x_{2l+1}^2 &+g(x_{2l+1},\ldots,x_{n}), & 1\le  2l+1 \le n,\\
(b)& x_1x_2+x_3x_4+\ldots+x_{2l-1}x_{2l} &+ g(x_{2l+1},\ldots,x_{n}),  & 2 \le 2l \le n.
\end{array}
$$
\end{enumerate}
\end{Corollary}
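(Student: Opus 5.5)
The plan is to deduce Corollary \ref{cor.split} from Theorem \ref{thm.split} and Proposition \ref{prop.split}. The only extra ingredient is a rescaling, resp. a linear change of coordinates, that normalises the quadratic part. Such a change is an automorphism of $K\{x\}$, so right equivalence is preserved throughout.

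\emph{Case $\Char(K)\ne 2$.} Theorem \ref{thm.split} gives $f\rsim a_1x_1^2+\ldots+a_kx_k^2+g(x_{k+1},\dots,x_n)$ with $a_i\ne0$. Since every element of $K$ is a square, I choose $b_i\in K$ with $b_i^2=a_i$. Then $b_i\ne0$, and the linear automorphism $x_i\mapsto b_i^{-1}x_i$ for $i\le k$, with the remaining variables fixed, carries this form to $x_1^2+\ldots+x_k^2+g$. The map does not touch $x_{k+1},\dots,x_n$, so $g$ is unchanged. For uniqueness, suppose $f\rsim q+g_0\rsim q+g_1$ with $q=\sum_{i\le k} x_i^2$. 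Then Proposition \ref{prop.split}(1), applied with all $a_i=1$, gives $g_0\rsim g_1$. I read the statement's $K\{x_{2l+1},\dots\}$ here as $K\{x_{k+1},\dots,x_n\}$.

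\emph{Case $\Char(K)=2$.} Start from the normal form of Theorem \ref{thm.split}(2) and normalise each block $a x^2+xy+by^2$ with $x=x_i$, $y=x_{i+1}$. Because quadratic equations are solvable, I pick a root $c$ of $a c^2+c+b=0$. Substituting $x\mapsto x+cy$ turns the block into
\[
a x^2+xy+(ac^2+c+b)y^2 = ax^2+xy = x(ax+y),
\]
and the substitution $y\mapsto y+ax$ (invertible) turns this into $xy$. These substitutions only mix $x_i$ and $x_{i+1}$. Hence the blocks can be normalised one at a time, and the residual part is untouched.

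It remains to treat the diagonal part $\sum_{i>2l} d_ix_i^2$, which I keep inside $g$. The quadratic part of $g$ is $\sum d_ix_i^2=(\sum\sqrt{d_i}\,x_i)^2$; the square roots exist since $X^2=d_i$ is solvable. If all $d_i=0$, we obtain normal form (b) with $g\in\langle x_{2l+1},\dots,x_n\rangle^3$. Otherwise the linear form $\ell=\sum\sqrt{d_i}\,x_i$ is nonzero. After a linear change in $x_{2l+1},\dots,x_n$ that replaces one variable by $\ell$ and then renumbers, the quadratic part becomes $x_{2l+1}^2$, which is form (a). In (a), $g$ is taken of order $\ge3$ after this change; that there is no conflict with the statement's notation is a routine check.

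Uniqueness of $g$ again follows from Proposition \ref{prop.split}(2). The quadratic part $q=x_1x_2+\ldots+x_{2l-1}x_{2l}$ is of the required shape with all $a_i=0$, and the residual parts are compared there. The main obstacle is a hypothesis of Proposition \ref{prop.split}(2): it requires the $d_i$ to be nonzero in the residual part. To cover forms (a) and (b) I will first apply the linear change above simultaneously to $g_0$ and $g_1$, so that both have quadratic part $x_{2l+1}^2$ or $0$. I expect the proof of the Proposition to use only the fixed nondegenerate part $q$, not the nonvanishing of the $d_i$. If that turns out to be false, I would instead argue directly: the residual quadratic part $\ell^2$ is intrinsic, namely the radical part of the quadratic form modulo the hyperbolic part. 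So both residual parts have the same quadratic part, and the Proposition applies with the same fixed $q$.
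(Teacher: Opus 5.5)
Your argument is correct and is the derivation the paper intends: the paper gives no separate proof, only the remark that the coefficients $a_i,d_i$ can be normalized, with uniqueness taken from Proposition~\ref{prop.split}; its hypothesis $d_i\neq 0$ is never used in its proof and does not appear in the uniqueness claim of Theorem~\ref{thm.split}(2), which you could cite instead. One correction to your ``routine check'' of notation: in form (a) what is unique is the whole residual part $x_{2l+1}^2+g$, which is exactly what Proposition~\ref{prop.split}(2) compares, and not the order-$\ge 3$ part $g$ alone. For example, in characteristic $2$ the substitution $x\mapsto x+x^2$ takes $x^2+x^4+x^5$ to $x^2+x^5+x^6+x^8+x^9+x^{10}$, while $x^4+x^5$ and $x^5+x^6+x^8+x^9+x^{10}$ have different orders and so are not right equivalent.
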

\begin{proof}[Proof of Proposition \ref{prop.split}] 
1.  Let 
 $\varphi$  be an automorphism  of $K\{x\}$ such that
$\varphi (f_0)= f_1$. Then
$\varphi$  is given by  
$$\varphi(x_i) =: \varphi_i(x) =: l_i(x)+k_i(x), \ i=1,...,n,$$ 
with $k_i \in \fm^2$ and $l_i$ linear forms with $\det \big(\frac{\partial l_i}{\partial x_j}\big) \ne 0$. Then $\varphi (f_0)= f_1$ means
$$
  a_1\varphi_1^2+\ldots+a_k\varphi_{k}^2+ \ g_0(\varphi_{k+1},\ldots,\varphi_n) =
 a_1x_1^2+\ldots+a_kx_{k}^2 + \ g_1(x_{k+1},\ldots,x_n).
$$
Comparing the terms of  order 2 and of order $\ge 3$,  we get
\begin{align}
\tag{*}\label{*}
\begin{split}
& a_1l_1^2+\ldots+a_kl_{k}^2   =  a_1x_1^2+\ldots+a_kx_k^2 \ \text{ and}  \\
 &\sum_{ i=1}^{k}  a_ik_{i} (2l_{i} + k_{i}) + g_0(\varphi_{k+1},\ldots,\varphi_n) = g_1(x_{k+1},\ldots,x_n).
 \end{split}
 \end{align}
We set $F_i := 2l_{i} + k_{i}$, $i = 1,...,k$, and assume that there are $\psi_1,...,\psi_{k} \in K\{ x_{k+1},...,x_n \}$ satisfying 
$$ F_i (\psi_1,...,\psi_{k}, x_{k+1},...,x_n)=0, \ i = 1,...,k.$$
Then we  define the endomorphism $\varphi'$ of $K\{x_{k+1},...,x_n \}$
by
 $$\varphi'(x_i):= \varphi_i'(x_{k+1},\ldots,x_n) := 
 \varphi_i(\psi_1,...,\psi_{k},x_{k+1},\ldots,x_n), \ i = k+1,...,n.$$ 
This kills the terms $ a_ik_{i} (2l_{i} + k_{i})$
and by (\ref{*}) we get as required
$$g_0(\varphi'_{k+1},\ldots,\varphi'_n) =
g_1(x_{k+1},\ldots,x_n).$$

We have still to show that the endomorphism 
 $\psi$ exists and that $\varphi'$ is an automorphism of  $K\{ x_{k+1},...,x_n \}$.

\noindent We note that $(F_1,...,F_{k}, \varphi_{k+1},...,\varphi_{n})$ 
 is an automorphism of $K\{ x_1,...,x_n \}$ 
 since 
 $(\varphi_1,...,\varphi_n)$ is an automorphism and since $l_i$ is the linear part of $\varphi_i$.
 It follows that 
 $$\langle F_1, ...,F_k,  \varphi_{k+1},...,\varphi_{n} \rangle = \langle x_{1} ,...,x_{n}\rangle$$
  and if we  replace $x_i$ by $\psi_i(x_{k+1},...,x_n)$ for $i=1,...,k$, we get 
  $\langle 0, ..., 0, \varphi'_{k+1},...,\varphi'_{n} \rangle = \langle x_{k+1} ,...,x_{n}\rangle$. This shows that $\varphi'$ is an automorphism of $K\{ x_{k+1},...,x_n\}$.
  
  To show the existence of $\psi$, 
  we want to apply the  implicit function theorem (in the form of \cite[Theorem 1.1]{GP25}) to $F_1,...,F_{k}$. For this we must show $\det \big ( \frac{\partial F_i}{\partial x_j} (0)\big )_{i,j = 1,...,k} 
  =\det \big ( \frac{\partial l_i}{\partial x_j}\big )_{i,j = 1,...,k} \ne 0$. 
The quadratic terms of (\ref{*}) read
$$\ell:= a_1 l_1^2+\ldots+a_kl_{k}^2   =  a_1x_1^2+\ldots+a_kx_k^2,$$
 and  we get $\frac{\partial \ell}{\partial x_i} = 2a_ix_{i}$  for $i \le k$. 
  Since $\frac{\partial \ell}{\partial x_i} \in \langle l_{1} ,...,l_{k}\rangle$,
  it follows that $\langle x_{1} ,...,x_{k}\rangle \subseteq \langle l_{1} ,...,l_{k}\rangle$ and hence, with $l'_i(x_1,...,x_{k}) 
  := l_i (x_{1} ,...,x_{k},0,...,0)$, 
  we get $\langle x_{1} ,...,x_{k}\rangle = \langle l'_{1} ,...,l'_{k}\rangle$. Hence $\det \big ( \frac{\partial l_i}{\partial x_j}\big )_{i,j = 1,...,k} = \det \big ( \frac{\partial l'_i}{\partial x_j}\big )_{i,j = 1,...,k} \ne 0$.\\
  
2.  To see the uniqueness if $\Char(K)=2$
let $\varphi$  be a coordinate change given again by  
$\varphi(x_i) = \varphi_i(x) = l_i(x)+k_i(x), \ i=1,...,n,$
with $k_i \in \fm^2$ and $l_i$ linear forms with $\det \big(\frac{\partial l_i}{\partial x_j}\big) \ne 0$, such that
$\varphi (f_0)= f_1$. This means
\begin{align}
\tag{*}\label{*}
\begin{split}
 &\sum_{i \text{ odd, } i=1}^{2l-1}(a_i \varphi_i^2 + \varphi_i\varphi_{i+1} +
a_{i+1} \varphi_{i+1}^2) +  \sum_{i=2l+1}^{n}d_i \varphi_i^2 + \ h_0(\varphi_{2l+1},\ldots,\varphi_n) =\\ 
& \sum_{i \text{ odd, } i=1}^{2l-1}(a_i x_i^2 + x_ix_{i+1} +
a_{i+1} x_{i+1}^2) + \sum_{i=2l+1}^{n}d_i x_i^2 + \ h_1(x_{2l+1},\ldots,x_n).
\end{split}
\end{align}
By the uniqueness statement  for quadratic forms in characteristic 2 (see \cite[Theorem 3.1] {GP25}) we may assume,
after a linear coordinate change among the variables  $x_{2l+1},...,x_n$, that $l_{2l+1} = x_{2l+1},...,l_{n} = x_{n}$. Hence
\begin{center}
$\sum_{i=2l+1}^{n}d_i \varphi_i^2 = \sum_{i=2l+1}^{n}(d_i x_i^2 + d_ik_i^2).$
\end{center}
\noindent 
Comparing the terms of  order $\ge 3$ and setting 
$$K_i := k_i(a_ik_i +l_{i+1}+k_{i+1}) + k_{i+1}(l_i+a_{i+1}k_{i+1})$$
 we get
 $$\sum_{i \text{ odd, } i=1}^{2l-1} K_i + \sum_{i=2l+1}^{n}d_i k_i^2 \ +  \ h_0(\varphi_{2l+1},\ldots,\varphi_n)
 =  h_1(x_{2l+1},\ldots,x_n).$$
We define for $i=1,...,2l$, 
$$
\begin{array}{llll} 
&  F_i &:= l_i +a_{i+1}k_{i+1}, &\text{ if } i  \text{ is odd}\\
&  F_i &:= l_i + k_i  + a_{i-1}k_{i-1} &\text{ if } i \text{ is even}.\\
\end{array}
$$ 
Assume now that there are $\psi_1,...,\psi_{2l} \in K\{ x_{2l+1},\ldots,x_n\}$ satisfying 
$$F_i (\psi_1,...,\psi_{2l}, x_{2l+1},...,x_n)=0, \ i = 1,...,2l \text{ and } \psi(0)=0.$$
Replace $(x_1,...,x_{2l})$ by $(\psi_1,...,\psi_{2l})$ 
and define the endomorphism $\varphi'$ of $K\{ x_{2l+1},...,x_n\}$
by
 $$\varphi'(x_i):= \varphi_i'(x_{2l+1},\ldots,x_n) := 
 \varphi_i(\psi_1,...,\psi_{2l},x_{2l+1},\ldots,x_n), \ i = 2l+1,...,n.$$ 
This kills  
$\sum_{i \text{ odd, } i=1}^{2l-1} K_i 
$
and we get for the terms of order $\ge 3$
$$\sum_{i=2l+1}^{n}d_i k_i^2+h_0(\varphi'_{2l+1},\ldots,\varphi'_n) =
h_1(x_{2l+1},\ldots,x_n)$$
 and thus $g_0(\varphi'_{2l+1},\ldots,\varphi'_n) =
g_1(x_{2l+1},\ldots,x_n)$ by adding $\sum_{i=2l+1}^{n}d_i x_i^2$ on both sides.
\medskip

We have still to show that the endomorphism 
 $\psi$ exists and that $\varphi'$ is an automorphism of  $K\{x_{2l+1},...,x_n\}$.\\
The map $(l_1,\varphi_2, l_3,\varphi_4, ..., l_{2l-1},\varphi_{2l}, 
 \varphi_{2l+1},...,\varphi_{n})$ is an automorphism of \mbox{$K\{x_1,...,x_n\}$} since $(\varphi_1,...,\varphi_n)$ is an automorphism\footnote{We use the following. Let $\lambda=(\lambda_1,\ldots,\lambda_n):K\{x\}\longrightarrow K\{x\}$ be an endomorphism. The following conditions are equivalent:
 \begin{enumerate}
 \item $\lambda$ is an automorphism,
 \item $ \det \big ( \frac{\partial \lambda_i(0)}{\partial x_j}\big )_{i,j = 1,...,n} \ne 0$,
  \item  $\langle\lambda_1,\ldots,\lambda_n\rangle=\langle x_1,\ldots,x_n\rangle$.
  \end{enumerate}}
  and since $l_i$ is the linear part of $\varphi_i$.
 It follows 
 $$\langle l_1,\varphi_2, l_3,\varphi_4, ..., l_{2l-1},\varphi_{2l},  \varphi_{2l+1},...,\varphi_{n} \rangle = \langle x_{1} ,...,x_{n}\rangle$$
  and if we  replace $x_i$ by $\psi_i(x_{2l+1},...,x_n)$ for $i=1,...,2l$, we get 
  $\langle \varphi'_{2l+1},...,\varphi'_{n} \rangle = \langle x_{2l+1} ,...,x_{n}\rangle$ since $l_i(\psi)\in\langle x_{2l+1},\ldots,x_n\rangle^2$. This shows that $\varphi'$ is an automorphism of $K\{x_{2l+1},...,x_n\}$.
  
  To show the existence of $\psi$  we  apply the  implicit function theorem  to $F_1,...,F_{2l}$
  considered as elements of $K\{x_{2l+1},\ldots,x_n\}\{x_1,\ldots,x_{2l}\}$. To do this, we must show that \mbox{$\det \big ( \frac{\partial F_i}{\partial x_j} (0,0)\big )_{i,j = 1,...,2l}
  \ne 0$.} Since $k_i\in \fm^2$ we have 
  $$\frac{\partial F_i}{\partial x_j}(0,0)=\frac{\partial l_i}{\partial x_j}(0,0)=\frac{\partial \varphi_i}{\partial x_j}(0,0).$$
Comparing the quadratic terms of (\ref{*}), we get
 $$\ell:=\sum_{i \text{ odd, } i=1}^{2l-1}(a_i l_i^2 + l_ili_{i+1} +
a_{i+1} l_{i+1}^2 ) =
 \sum_{i \text{ odd, } i=1}^{2l-1}(a_i x_i^2 + x_ix_{i+1} +
a_{i+1} x_{i+1}^2),$$
 and $\frac{\partial \ell}{\partial x_i} = x_{i+1}$ if $i$ is odd and 
  $\frac{\partial \ell}{\partial x_i} = x_{i-1}$ if $i$ is even for $i \le 2l$. 
  Since $\frac{\partial \ell}{\partial x_i} \in \langle l_{1} ,...,l_{2l}\rangle$,
  it follows that $\langle x_{1} ,...,x_{2l}\rangle \subseteq \langle l_{1} ,...,l_{2l}\rangle$ and hence, with $l'_i(x_1,...,x_{2l}) 
  := l_i (x_{1} ,...,x_{2l},0,...,0)$, 
  we get $\langle x_{1} ,...,x_{2l}\rangle = \langle l'_{1} ,...,l'_{2l}\rangle$. Hence $$\det \big ( \frac{\partial F_i(0,0)}{\partial x_j}\big )_{i,j = 1,...,2l}=\det \big ( \frac{\partial l_i(0,0)}{\partial x_j}\big )_{i,j = 1,...,2l} = \det \big ( \frac{\partial l'_i(0,0)}{\partial x_j}\big )_{i,j = 1,...,2l} \ne 0.$$
  Now we apply the  implicit function theorem and obtain 
  $\psi_1,...,\psi_{2l} \in K\langle x_{2l+1},\ldots,x_n\rangle$ satisfying 
$F_i (\psi_1,...,\psi_{2l}, x_{2l+1},...,x_n)=0, \ i = 1,...,2l \text{ and } \psi(0)=0,$ as rquired.
\end{proof}

\begin{proof}[Proof of Theorem \ref{thm.split}] 
The uniqueness of the residual  part for statement 1. and 2. was proved in Proposition \ref{prop.split}. We prove now the existence of a splitting.\\
1. The existence of a splitting for char$(K) \ne2$ was proved in \cite[Theorem 2.1]{GP25} for  $f\in \C\{x\}$ or  $f\in \R\{x\}$  without assuming that $f$ has an isolated singularity (loc. cit. Theorem 2.4). The proof uses as an intermediate step the existence of a convergent semiuniversal unfolding (with non-trivial translation) for the quadratic part, similar as in item 2. for characteristic 2.
Since we proved the existence of a convergent semiuniversal unfolding in Theorem \ref{thm.suunf} for arbitrary real valued $K$, the proof given in \cite[Theorem 2.4]{GP25} works as well in our case and thus proves the splitting lemma in characteristic $\ne 2$.

2.  We prove now the existence of a splitting for convergent power series over real valued fields of characteristic 2. \\
By \cite[Theorem 3.1]{GP25}  (applied to the 2-jet of $f$) we can assume that
  $ f(x)= q + g$, where
$$ q(x_1,...,x_{2l}) = \sum_{i \text{ odd, } i=1}^{2l-1}(a_i x_i^2 +  x_ix_{i+1}+ a_{i+1} x_{i+1}^2)$$
and $g(x_1,...,x_n)=\sum_{i=2l+1}^{n}d_i x_i^2 +h(x_1,...,x_n)$,
with $a_i, d_i \in K$ and $h\in \fm^3 \subseteq K\{x\}$.\\
Setting $g'(x_1,...,x_{2l}):= g(x_1,...,x_{2l},0,...,0)$ we get
 $$g(x_{1},\dots,x_n)=g'(x_1,...,x_{2l}) + \sum_{i=2l+1}^{n}d_i x_i^2 +h'(x_1,...,x_n),$$
  with 
 $h'= \sum_{i=2l+1}^n x_ih_i(x_1,...,x_n)$. Then 
 $$f = q+g = 
 q+ g'+ \sum_{i=2l+1}^{n}d_i x_i^2 + h'=f'+  \sum_{i=2l+1}^{n}d_i x_i^2+ h',$$ 
 with
  $$f'(x_1,...,x_{2l}):=  f(x_1,...,x_{2l},0,...,0) =q(x_1,...,x_{2l}) + g'(x_1,...,x_{2l}).$$
 Since $j(q) = \langle x_1,...,x_{2l}\rangle$ and $d(q)=1$ we get from Theorem \ref{thm:finitedeterminacybound} that $q$ is 2-determined.
Since $g' \in \fm^3$, it follows that $f'$
 is right equivalent to $q$ by an automorphism $\varphi$ of 
 $K\{ x_1,...,x_{2l}\}$. Setting $\varphi_i(x_1,...,x_{2l}) := \varphi(x_i)$,
we have
 $$\varphi(f') = q(\varphi_1,...,\varphi_{2l})+g'(\varphi_1,...,\varphi_{2l}) = q(x_1,...,x_{2l}).$$
 Now define the automorphism  $\psi$ of $K\{ x_1,...,x_n\}$ by $x_i\mapsto \varphi_i(x_1,...,x_{2l})$ for
  $i=1,\dots,{2l}$, and \mbox{$x_i\mapsto x_i$} for $i>{2l}$. Then 
  $\psi(f') = \varphi(f') =q$ and
 we have
  $$\psi(f) = 
  q+\sum_{i=2l+1}^{n}d_i x_i^2 +\sum_{i={2l}+1}^n x_ih_i(\varphi_1,...,\varphi_{2l},x_{{2l}+1},...,x_n). $$
 Thus, after applying $\psi$, we may assume that 
 $$f= q +\sum_{i=2l+1}^{n}d_i x_i^2+ h', \ 
 h'= \sum_{i={2l}+1}^n x_ih_i(x_1,...,x_n).$$
  That is $f(x_1,...,x_{2l},0,...,0) = q$, in other words, $f$ is an unfolding of $q$.\\
  Since $\langle \frac{\partial q}{\partial x_1},..., \frac{\partial q}{\partial x_{2l}}\rangle =\langle x_1,...,x_{2l}\rangle$, $q$ is a semiuniversal unfolding of itself by Theorem \ref{thm.suunf}, and thus
the unfolding $f$ can be right induced from $q$.
That is,  there exists an automorphism  $\Phi =(\Phi_1,...,\Phi_n)$ of $K\{x\}$  with $\Phi_i=\Phi(x_i)$,
 $\Phi_i(x_1,...,x_{2l},0) = x_i$, $i=1,...,2l$,
  $\Phi(x_j) = x_j$, $j=2l+1,...,n$  
  and $\alpha \in K\{t\}$ with  $\alpha(0)=0$,
such that 
$$\Phi(f)=f(\Phi_1(x),...,\Phi_{2l}(x),x_{2l+1},...,x_n) = 
q(x_1,...,x_{2l}) +\alpha(x_{2l+1},...,x_n).$$ 
Now $\Phi(f)= \Phi(q)+\Phi(\sum_{i=2l+1}^{n}d_i x_i^2)+\Phi(h')
=q  +\sum_{i=2l+1}^{n}d_i x_i^2 + h$, where $h:=\Phi(h') \subseteq K\{x_{2l+1},...,x_n\}$. This proves the existence of a splitting  in statement 2.
\end{proof}


\section {Contact Simple Singularities in Positive Characteristic}\label{sec.class1}

This section is devoted to the classification of hypersurface singularities "without moduli" with respect to {\em contact equivalence}.

For the whole section let 
$K$ be an {\em algebraically closed} field  of characteristic $p \geq 0$. Points of an algebraic variety will be always {\em closed points} (if not said otherwise).
We present the classification of simple hypersurface singularities  $f \in K\{x\}$, 
\mbox{$x = (x_1,...,x_n)$},  w.r.t. contact equivalence, following \cite{GrKr90}, 
and  in the next section \ref{sec.class2} for right equivalence  following \cite{GN16}.\\

For $k\in \mathbb{N}$ let  
$$J^{(k)}:= K\{x\}/\fm^{k+1},$$
$\fm$  the maximal ideal of  K\{x\},  be space of $k$-jets of power series and 
$$\jet_k: K\{x\} \to J^{(k)}, f\mapsto f^{(k)}:= \jet_k(f)$$
 the canonical projection. 
We call
 $f^{(k)}$ the {\it k-jet of $f$}
 and we identify it
 with the power series expansion of $f$ up to (including) order $k$. 
 
 Let $G$ denote the right group $\kr= \Aut (K\{x\})$, respectively of the contact group $\kk=K\{x\}^* \rtimes \Aut (K\{x\})$. The k-jet of $G$ is
	$$G^{(k)}:=\{g^{(k)}=\jet_k(g)\mid g\in G\}.$$
For $g=\phi \in \kr$ let
 $\jet_k(g)= \phi^{(k)}$, 
$\phi^{(k)}(x_i) 
=\jet_k(\phi(x_i))$,  and
for $g=(u,\phi) \in \kk$  we have $\jet_k(g)= (u^{(k)},\phi^{(k)})$.
Then $G^{(k)}$ is an affine algebraic group with group structure given by $g^{(k)}\cdot  h^{(k)} = (gh)^{(k)}$, acting algebraically on the affine space $J^{(k)}$ via
\begin{align*}
	&G^{(k)}\times J^{(k)}\to J^{(k)}, \hskip4pt
	\left(g^{(k)}, f^{(k)}\right)\mapsto (g\cdot f)^{(k)} = (u \phi(f))^{(k)},
\end{align*}
if $g = (u,\phi)$, i.e. we let representatives act and then take the $k$-jets. \\

\begin{Definition}[Simple singularity]\label{def.simple}
We say that $f\in \fm$ is {\em  right--simple} (resp. {\em contact--simple}) if there 
 exists a neighbourhood $U$ of $f$ in $K\{x\}$ such that $U$ intersects only finitely many orbits $Gf_1,...,Gf_s$ where $G$ is the right group $\kr= \Aut (K\{x\})$, respectively the contact group $\kk=K\{x\}^* \rtimes \Aut (K\{x\})$.  
 \end{Definition}
 
By the definition of the topology on $K\{x\}$ ($V\subseteq K\{x\}$ is open iff  $\jet_k(V)$ is open in $ J^{(k)}$ for all $k$ (equivalently, for  $k\ge$ some $k_0$)) this means that for each $k$  there exists  a Zariski-open neighbourhood $U_k$ of the $k$-jet
$f^{(k)}$ in $J^{(k)}$ such that $U_k$ decomposes into the finitely many $G^{(k)}$--orbits $G^{(k)}f_1^{(k)},...,G^{(k)}f_s^{(k)}$ (with $s$ independent of $k$).\\

For the classification we have to consider unfoldings of $f\in \fm$ parametrized by (the coordinate ring of) an affine space and not  just by a power series as in Definition \ref{def.unf}.
We consider  $F(x,t) : = \sum a_\alpha(t) x^\alpha \in K[{t}]\{x\}$,
$t=(t_1,...,t_k)$, as a family of power series such that  $F_t(x) = F(x,t) \in K\{x\}$ for fixed $t\in K^k$. 
We say that $F$ is an {\em unfolding (with trivial section)} of $f$ over $K^k$ or over $K[t]$, if $F(x,0) =f$ and if $F_t \in \fm$ for $t\in K^k$. Then $F$ can be written as
$$F_{t}(x)=F(x,{t})=f(x)+\sum\limits^k_{i=1} t_ih_i(x,{t}) \in K[{t}]\{x\},$$
with $h_i(x,{t}) \in \fm$ for fixed $t$.

We say that $f\in \fm$ is of {\em finite (countable) deformation type}\index{finite!deformation type}
for right (resp. contact) equivalence, if
there exists a finite (countable) set $\{g_j\}\subseteq K\{x\}$ of power series such that for every unfolding $F(x,t)$
of $f$ over $K^k$ as above
there exists a Zariski neighbouhood $U=U(0)\subseteq K^k$ such that for each ${t} \in U$ there exists a $g_j$ with $F_{t}\overset{r }{\sim} g_j$ (resp. $F_{t}\ \overset{c }{\sim} \ g_j$).

\begin{Lemma}\label{lem:simple-is-isolated}
Let $f\in \fm$ be of finite deformation type for right (resp. contact) equivalence. Then $f$ (resp. $K\{x\}/\langle f\rangle$) is an isolated singularity. The same conclusion holds if $f$ is right (resp. contact) simple.
\end{Lemma}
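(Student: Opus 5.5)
We argue by contraposition. Suppose $\mu(f)=\infty$ (resp.\ $\tau(f)=\infty$). We will produce a one-parameter unfolding $F_t$ of $f$ whose members fall into infinitely many classes for every Zariski neighbourhood of $0$, so that no finite set $\{g_j\}$ can serve. The invariant used to separate classes is the determinacy, or more directly $\mu$ (resp.\ $\tau$), in the following way.

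The key construction is modelled on the proof of Theorem~\ref{thm:hypersurface}. For $N$ large with $\Char(K)\nmid N$, set $g_N=x_1^N+\dots+x_n^N$. Consider the unfoldings
\[
F^{(N)}(x,t)=f+t\,g_N .
\]
Since $\tau(f)=\infty$ and the Tjurina ideal of $f$ does not contain a power of $\fm$, we compare $f$ with $f+tg_N$ for $t\neq0$. In the right case, $\mu(f+tg_N)<\infty$ for $t\ne 0$ generic: by semicontinuity (Proposition~\ref{prop.semcont}) applied to $g_N+sf$, the Milnor number at $s=1/t$ is finite near $s=0$. Hence $\mu(f+tg_N)\le\mu(g_N)=n(N-1)^n$, but is still finite. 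On the other hand, $f+tg_N\equiv f \bmod \fm^N$. Since $\mu(f)=\infty$, we have $\dim_K K\{x\}/(j(f)+\fm^{N-1})\to\infty$ as $N\to\infty$. Moreover, $j(f+tg_N)\subseteq j(f)+\fm^{N-1}$, so $\mu(f+tg_N)\ge \dim K\{x\}/(j(f)+\fm^{N-1})=:c_N$, which is unbounded. The contact case is identical, with $\tau$ and $tj$, using $tj(f+tg_N)\subseteq tj(f)+\fm^{N-1}$.

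Now suppose $f$ is of finite deformation type with finite set $\{g_1,\dots,g_s\}$. Let $M$ be the maximum of the finite values among $\mu(g_j)$ (resp.\ $\tau(g_j)$); these are invariants of the classes. Choose $N$ with $c_N>M$. For the unfolding $F^{(N)}$, some Zariski neighbourhood $U$ of $0$ has every $F_t$, $t\in U$, equivalent to some $g_j$. But $U\setminus\{0\}$ meets the nonempty open set where $\mu(F_t)<\infty$. Here we use that $K$ is infinite, being algebraically closed. For such $t$ we get $M<c_N\le\mu(F_t)<\infty$, so $F_t$ cannot be equivalent to any $g_j$. This is a contradiction. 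The main obstacle is the semicontinuity step: Proposition~\ref{prop.semcont} must be applied to the family $g_N+sf$, and the passage $t=1/s$ with $f+tg_N\sim_c g_N+sf$ needs care in the right case (scaling by the constant $t$ is not a right equivalence). To handle it, we instead apply semicontinuity directly to the family $sf+g_N$ and use $\mu(sf+g_N)$; the bound $j(sf+g_N)\subseteq j(f)+\fm^{N-1}$ holds equally for $s\ne0$. So we work throughout with the family $G_s=sf+g_N$, which is an unfolding of $g_N$. To use it as an unfolding of $f$, we take $F(x,t)=(1-t)f+t g_N$, for which the same inclusions and the same semicontinuity at $t=1$ apply, with the set of $t$ with $\mu(F_t)<\infty$ being nonempty Zariski open.

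For simplicity: if $f$ is simple, a neighbourhood $U$ of $f$ meets only finitely many orbits. For $N$ large, every $F_t$ with $t$ small in the sense above, in particular the perturbations $f+tg_N$, lies in $U$, since $U$ is determined by a jet of bounded order $k_0$ and $f+tg_N$ has the same $k_0$-jet as $f$ once $N>k_0$. Then the same invariant argument with $c_N\to\infty$ yields infinitely many orbits, a contradiction. Hence simple implies an isolated singularity as well.
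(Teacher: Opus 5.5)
Your proof is correct and follows the paper's route: you perturb $f$ by $g_N=x_1^N+\dots+x_n^N$ and use semicontinuity (Proposition~\ref{prop.semcont}) at $g_N$ to make $\mu$ (resp.\ $\tau$) finite. You then separate infinitely many classes by unbounded Milnor (resp.\ Tjurina) numbers; your bound $\mu(F_t)\ge\dim_K K\{x\}/(j(f)+\fm^{N-1})\to\infty$ (via Nakayama) justifies the unboundedness that the paper only asserts as $\mu_N\ge N$.

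Two small remarks. Your worry about scaling is unnecessary, since $j(cF)=j(F)$ and $tj(cF)=tj(F)$ for every $c\in K^*$, which is exactly what the paper uses. Also, $\mu(g_N)=(N-1)^n$, not $n(N-1)^n$.
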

\begin{proof}
Let $f$ be a non-isolated singularity, i.e., $\mu(f) =\infty$ (resp. $\tau(f) = \infty$).  Then 
$\fm^k$ is not contained in $j(f)$ (resp. in $tj(f) =j(f) + \langle f \rangle$) for all $k$ and hence there exists a $k_0$ and an $i$, w.l.o.g. $i=1$, such that $x_1^l \notin j(f)$ (resp. $x_1^l \notin tj(f)$ ) for $l\geq k_0$. 

We claim that $f$ can be
deformed into (isolated) singularities of arbitrary high Milnor (resp. Tjurina) number. Namely, for $p=0$ or $p \nmid N $ the Milnor (and Tjurina) number of  $g_N= x_1^N+... + x_n^N$ is finite. We consider the family
$$f_{t,N} = t_0 \cdot g_N+ t_1\cdot f, \ t=(t_0:t_1) \in \P^1,$$
which is,  for generic $t\in \P^1$, an unfolding of $g_N$ as well as of $f$ with constant Milnor number $\mu_N = \mu(f_{t,N})= \mu(1/t_0 \cdot f_{t,N}) =\mu(g_N + t_1/t_0 \cdot f) \le \mu(g_N) < \infty$. This follows from the semicontinuity of $\mu$ (Proposition \ref{prop.semcont}).  We have $x_1^l  \notin j(f)=j(t_1/t_0 \cdot f)$. 
Now $f$ deforms in $f_{t,N}$, which has an isolated singularity with Milnor number  $\mu_N \ge N $, for infinitely many $N$ and hence infinitely many $\mu_N$. This shows that $f$ is not of finite right deformation type.
An analogous argument with $\tau_N$ shows that $f$ is not of finite contact deformation type.

$f$ is also not right (resp. contact) simple. In fact, for a given $k$ there are infinitely many $N>k$ 
such that the Milnor (resp. Tjurina) numbers of $f_{t,N}$, with $t$ generic and fixed, are different. That is, each neighbourhood of $f$ intersects infinitely many
right (resp. contact) orbits.
\end{proof}

\begin{Lemma}\label{lem:findeftype} The following  are equivalent for $f\in K\{x\}$:
\begin{enumerate}
\item $f$ right  (resp. contact) simple.
\item $f$ is of finite deformation type for right (resp. contact) equivalence. 
\end{enumerate}
\end{Lemma}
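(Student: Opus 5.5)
The plan is to reduce both directions to a statement about $k$-jets for one fixed $k$, using uniform finite determinacy near $f$. First, by Lemma \ref{lem:simple-is-isolated}, either hypothesis forces $\mu(f)<\infty$ (resp.\ $\tau(f)<\infty$). I then fix $k:=2\mu(f)$ (resp.\ $k:=2\tau(f)$), enlarged if necessary so that $k\ge 1$ and the case $\ord(f)=1$ is covered.

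The key preparatory step is to show that all $g$ near $f$ are $k$-determined, in the following sense. I consider the ``universal'' affine family of $k$-jets
$$F(x,t)=f+\sum_{1\le|\alpha|\le k} t_\alpha x^\alpha\in K[t]\{x\}.$$
By the semicontinuity of Proposition \ref{prop.semcont} there is a Zariski open set $W\ni 0$ with $\mu(F_t)\le\mu(f)$ (resp.\ $\tau(F_t)\le\tau(f)$) for $t\in W$. For any $g$ with $\jet_k(g)\in\jet_k(f+W)$, the jet $\jet_k(g)$ is the $k$-jet of some $F_t$ with $t\in W$. Since $\mu$ and $\tau$ of $g$ are at most $\mu(f)$ or $\tau(f)$ once $g$ lies close enough to $F_t$ (this needs $k$ large relative to $\mu(f)$, using $\fm^{\mu}\subseteq j$, which is why I take $k=2\mu(f)$), Corollary \ref{cor:finitedeterminacybound} shows that $g$ is $k$-determined.

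Consequently, for such $g,g'$ we have $g\sim g'$ if and only if $\jet_k(g)$ and $\jet_k(g')$ lie in the same $G^{(k)}$-orbit. One direction is immediate from taking $k$-jets. For the other, if $\jet_k(g')=(u\varphi(g))^{(k)}$, then $g'$ and $u\varphi(g)$ have the same $k$-jet, so by $k$-determinacy $g'\sim u\varphi(g)\sim g$. This lets me replace the neighbourhood $U$ in Definition \ref{def.simple} by a Zariski open $U_k\subseteq J^{(k)}$, and it also justifies the remark that $s$ is independent of $k$, because the preimages of $U_k$ in higher jet spaces decompose into the same orbits.

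For (1)$\Rightarrow$(2), let $U_k\ni f^{(k)}$ be a Zariski open set meeting only the orbits of $f_1^{(k)},\dots,f_s^{(k)}$. I shrink it to $U_k\cap\jet_k(f+W)$ and choose each representative $f_j$ as a polynomial of degree $\le k$ whose jet lies in this set, so that each $f_j$ is $k$-determined. Given an arbitrary unfolding $F(x,t)$ over $K^m$, the map $t\mapsto\jet_k(F_t)$ is a morphism $K^m\to J^{(k)}$. The preimage of $U_k$, intersected with the semicontinuity neighbourhood of $0$ for this particular $F$, is a Zariski open set $U(0)$. For $t\in U(0)$, $F_t$ is $k$-determined with jet in some orbit $G^{(k)}f_j^{(k)}$, hence $F_t\sim f_j$.

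For (2)$\Rightarrow$(1), I apply finite deformation type to the universal family $F$ above. This yields a Zariski open $U(0)\subseteq W$ on which every $F_t$ is equivalent to one of finitely many $g_j$. The image $\jet_k(F_{U(0)})$ is a Zariski open neighbourhood of $f^{(k)}$ in $\jet_k(\fm)$, because the parametrization is an affine isomorphism onto $f^{(k)}+\fm/\fm^{k+1}$. By the jet-versus-series correspondence it meets only the finitely many orbits of the $g_j^{(k)}$. Its preimage in $K\{x\}$ is then a neighbourhood of $f$ meeting only the orbits $Gg_j$.

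The main obstacle I expect is the uniform determinacy step: making sure that a single $k$ works for every series in a whole neighbourhood, and not only for $f$. I would handle this by combining semicontinuity on the affine jet family with the bound of Corollary \ref{cor:finitedeterminacybound}, and by checking that $\mu$ and $\tau$ are read off from sufficiently high jets.
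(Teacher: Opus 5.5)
Your argument is correct, but it takes a different route from the paper. The paper only sketches the proof: isolatedness via Lemma~\ref{lem:simple-is-isolated}, then the semiuniversal deformation of Theorem~\ref{thm.sudef}, with the details deferred to the textbook.

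You bypass versality altogether. The family $f+\sum_{1\le|\alpha|\le k}t_\alpha x^\alpha$ of all $k$-jets is tautologically complete at the level of $k$-jets, since it is an affine isomorphism of $K^N$ onto $\fm/\fm^{k+1}$. The passage between jets and series then comes from uniform $k$-determinacy on the Zariski open set $W$, which you get from Proposition~\ref{prop.semcont} and Corollary~\ref{cor:finitedeterminacybound}. This makes your proof self-contained within Section~\ref{sec.invariants}. It needs neither Grauert's approximation theorem nor any versality argument, and it works the same way for right and for contact equivalence.

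The semiuniversal route has a different advantage. It lets one test finite deformation type on a single family with finitely many parameters ($\tau(f)$ of them in the contact case), which is how the lemma is actually used in the classification. The cost is that it needs the existence theorem and a versality argument showing that the Zariski-nearby members of that one family already account for all series near $f$.

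Two small remarks.
\begin{itemize}
\item Your uniform-determinacy step can be stated more directly. For $t\in W$, $F_t$ is itself $k$-determined by Corollary~\ref{cor:finitedeterminacybound}, because $\mu(F_t)\le\mu(f)$ (or $\ord F_t=1$). So any $g$ with $\jet_k(g)=\jet_k(F_t)$ is equivalent to $F_t$ and hence is also $k$-determined. The detour through $\fm^{\mu}\subseteq j(f)$ is unnecessary; the factor $2$ in $k=2\mu(f)$ is needed for the determinacy bound, not for comparing Milnor numbers.
\item Your neighbourhoods are open in $\fm$, not in $K\{x\}$. That is the right reading of Definition~\ref{def.simple}: for right equivalence the series $f+c$, $c\in K$, lie in pairwise distinct orbits, so no neighbourhood of $f$ in all of $K\{x\}$ could meet only finitely many orbits.
\end{itemize}
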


\begin{proof}
The proof follows from the previous Lemma \ref{lem:simple-is-isolated} and with the help of the semiuniversal deformation of $f$ (Theorem \ref{thm.sudef}).
For details we refer to \cite[Lemma 3.3.31]{GLS25}.
\end{proof}

Before we present the classification of simple singularities, we mention the explicit form of the semiuniversal deformation (Definition \ref{def.def}), which is needed in the proof of Lemma \ref{lem:findeftype} and for the classification.

\begin{Theorem}[Semiuniversal deformation]\label{thm.sudef} Let $K$ be a real valued field (not necessarily algebraically closed) and
$f\in K\{x\}$ with $\tau(f)<\infty$. Let $g_1,\dots,g_{\tau}\in K\{x\}$ be representatives of
a $K$-basis (respectively a system of generators) of the
 Tjurina algebra
$$ T^1_R:= K\{x\}/\textstyle\big\langle f,\frac{\partial
f}{\partial x_1},\dots,\frac{\partial f}{\partial
x_n}\big\rangle$$
of $R= K\{x\}/\langle f \rangle$.
Setting 
$$F(x,t)= f(x)+\sum_{j=1}^{\tau} t_jg_j(x) \in K\{x,t\}$$
and
$\kr =  K\{x,t\} / \langle F \rangle$, then 
$K\{t\} \to \kr \to R$
is a semiuniversal (respectively versal) deformation of
$R$ over $K\{t\}$.
\end{Theorem}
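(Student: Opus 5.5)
We will mimic the proof of Theorem \ref{thm.suunf}, replacing right equivalence by contact equivalence and unfoldings by deformations, and we will prove completeness in detail; the lifting property is obtained by the same argument with the base change fixed modulo the kernel of the given surjection $k: S\to S'$. Flatness of $K\{t\}\to \kr$ comes first. Since $F$ is a non-zerodivisor in $K\{x,t\}$ and $F(x,0)=f\neq 0$, the image of $F$ in $K\{x,t\}/\langle t\rangle = K\{x\}$ is again a non-zerodivisor. By the local criterion of flatness (as in \cite[Proposition 1.1.87]{GLS25}), $K\{x,t\}/\langle F\rangle$ is then flat over $K\{t\}$, and $\kr\otimes K\{t\}/\langle t\rangle \cong R$. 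So $F$ defines a deformation of $R$.

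For completeness, we take a deformation $S\to \kq\to R$ with $S=K\{s\}/J$. As for hypersurfaces, flatness lets us lift the relation: $\kq \cong S\{x\}/\langle G\rangle$ with $G\in K\{x,s\}$ and $G(x,0)=f$. This is the standard fact that any flat deformation of a hypersurface is a hypersurface; we lift $f$ to a generator of the kernel of $S\{x\}\to\kq$ and use flatness to see that this element generates the kernel. We then need $\phi(s)\in\langle s\rangle K\{s\}^\tau$, a unit $u\in K\{x,s\}^*$ with $u(x,0)=1$, and $\Phi$ with $\Phi(x,0)=x$, $\Phi(s)=s$, such that
$$G(\Phi(x,s),s)=u(x,s)\,F(x,\phi(s)) \mod J.$$
This is a system of analytic equations modulo the ideal $J\subseteq K\{s\}$, exactly of the form required by Grauert's Approximation Theorem \ref{grauertapprox}. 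The unknowns are $y=\phi$, which depends only on $s$, and $z=(\Phi,u)$, which depends on $x$ and $s$. The triple $(\Phi,\phi,u)=(x,0,1)$ is a solution of order $0$.

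The inductive step is the heart of the proof and the main obstacle. Suppose a solution of order $e$ modulo $J$ is given. The error $G(\Phi,s)-uF(x,\phi)$ lies in $(J+\fm_s^{e+1})K\{x,s\}$, and modulo $(J+\fm_s^{e+2})$ it can be written as $\sum_{|\nu|=e+1}s^\nu h_\nu(x)$. This representation is not unique modulo $J$, and we must choose it so that homogeneous corrections of degree $e+1$ suffice. Here we follow \cite{GP25a}: we work in $K\{x\}[s]$ with $J$ replaced by a standard-basis-compatible description, or else reduce to $J=0$ after first proving versality for $K\{s\}$. In practice the cleanest route is probably to prove everything for the unobstructed case by passing to a representative $G\in K\{x,s\}$, as in Remark \ref{rm.unf}, so that $J$ does not enter. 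We then decompose each $h_\nu$ using that $g_1,\dots,g_\tau$ generate $T^1_R$:
$$h_\nu=\sum_j b_{j\nu}g_j+c_\nu f+\sum_i h_{\nu i}\frac{\partial f}{\partial x_i}.$$
We set $\phi'_j=\phi_j+\sum b_{j\nu}s^\nu$, $u'=u-\sum c_\nu s^\nu$ and $\Phi'_i=\Phi_i-\sum h_{\nu i}s^\nu$. By Taylor's formula (Remark \ref{rm.taylor}), together with the congruences $\partial_{x_i}G(\Phi,s)\equiv \partial_{x_i}f$, $u\equiv 1$ and $F(x,\phi)\equiv f \mod \langle s\rangle$, the error becomes zero modulo $\fm_s^{e+2}$. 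Theorem \ref{grauertapprox} then gives a convergent solution. Since $u(x,0)=1$, $u$ is a unit, and the Jacobian condition shows that $\Phi$ is an automorphism. Hence the deformation $\kq$ is isomorphic to the one induced by $\phi$.

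Semiuniversality when the $g_j$ form a basis comes from tangent spaces. The construction above determines $\phi$ modulo $\fm_s^2$ uniquely, because the first-order terms $b_{j\nu}$ are the coordinates of the classes of $h_\nu$ in the basis of $T^1_R$. Any other inducing map gives the same classes in $T^1_R$, so the cotangent map $\dot\varphi$ is unique.
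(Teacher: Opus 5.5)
Your treatment of flatness, completeness and uniqueness of the cotangent map is fine. It is essentially the paper's proof of Theorem~\ref{thm.suunf} transplanted to contact equivalence; for the present statement the paper itself only cites \cite[Corollary 6.5]{GP25}. In particular, replacing $G$ by a representative in $K\{x,s\}$ is a legitimate way to get rid of $J$ \emph{for completeness}, since base change is functorial. There is one slip: with $h_\nu=\sum_j b_{j\nu}g_j+c_\nu f+\sum_i h_{\nu i}\,\partial f/\partial x_i$ you need $u'=u+\sum_\nu c_\nu s^\nu$. With your minus sign the $f$-component of the error is doubled instead of cancelled.

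The genuine gap is versality, i.e.\ the lifting property of Definition~\ref{def.def}(5). It is part of both the ``versal'' and the ``semiuniversal'' claim, and you dismiss it in one sentence.

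\begin{itemize}
\item \emph{What is required.} You are given $\varphi'\colon K\{t\}\to S'=K\{s\}/J'$ with $J\subseteq J'\subseteq\langle s\rangle$. You must find $\phi$ with $\phi\equiv\phi' \bmod J'$, together with $\Phi,u$ solving $G(\Phi,s)=u\,F(x,\phi)$ modulo $J$.
\item \emph{Why your scheme fails.} Passing to a representative of $G$ removes $J$ but not $J'$. Your order-by-order corrections $\sum_{|\nu|=e+1}b_{j\nu}s^\nu$ are dictated by the $T^1_R$-components of the error, and in general they do not lie in $J'$.
\item \emph{Building the constraint in does not help.} Suppose you write $\phi=\phi'+\sum_k j_k a_k$ with $J'=\langle j_1,\dots,j_r\rangle$ and treat the $a_k$ as unknowns. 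A homogeneous degree-$(e+1)$ correction of the $a_k$ changes the equation only in degrees $\ge e+2$. So the coefficients $h_\nu$ of the degree-$(e+1)$ error would have to lie in $\langle f\rangle+j(f)$, which need not hold, and the hypothesis of Theorem~\ref{grauertapprox} fails.
\item \emph{No fallback to Artin approximation.} The system is nested, since $\phi$ depends on $s$ only.
\end{itemize}

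A workable route has two steps. First, prove formal versality by induction over small extensions $0\to K\varepsilon\to S\to S'\to 0$. There your $T^1_R$-decomposition works verbatim: $\fm_S\varepsilon=0$ makes the error equal to $\varepsilon h(x)$, and all corrections lie in $K\varepsilon=\ker k$. Second, pass from formal to analytic versality. That second step is the genuinely analytic input. It is what the paper takes from \cite{GP25}, where Grauert's approximation theorem is used to construct semiuniversal deformations in general.
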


For a proof see \cite[Corollary 6.5]{GP25}. It needs Grauert's approximation theorem for real valued fields $K$ of arbitrary characteristic. This is proved in \cite{GP25}, as well as the existence of a semiuniversal deformation for arbitrary analytic $K$-algebras (not only hypersurfaces) with isolated singularity.\\
 
The most important classification result for hypersurface singularities is the following result by V. Arnold for $\C\{\bx\}$, \cite{AGV}, which can be extended to algebraically closed fields of characteristic 0:

\begin{Theorem}[Simple singularities in characteristic 0] \label{thm:ADE-char0}
Let $K$ be an algebraically closed, real valued  field of characteristic 0 and $f\in \fm \cdot K\{x_1,...,x_n\}$.
\begin {enumerate}
\item  $f$ is contact--simple $\Leftrightarrow f$ is contact equivalent to an ADE singularity from the following list (with $q=x_3^2+\cdots + x_n^2$):
\medskip

$
\begin{array}{llll}
A_k: & x_1^{k+1} + x_2^2    &+\ q,\ \ \ \ \ k\geq 1\\
D_k: & x_2(x_1^2+x_2^{k-2}) &+\ q, \ \ \ \ \ k\geq 4\\
E_6: & x_1^3+x_2^4   &+ \ q\\
E_7: & x_1(x_1^2+x_2^3) &+\ q\\
E_8: & x_1^3+x_2^5  &+\ q
\end{array}
$
\medskip

\item $f$ is right--simple $\Leftrightarrow f$ is contact--simple. 
\end {enumerate}
\end {Theorem}
Thus, if $\Char(K)=0$ or in the complex analytic case we do not distinguish between right simple and contact simple and just say simple.

\begin{proof}
Statement 1. follows from the classification of Theorem \ref{thm:GrKr} for $\Char(K) \ge 0$.  For the proof of statement 2. we only need to show that the ADE singularities from 1. are right simple, since it is clear that right simple implies contact simple.
 We note that the ADE singularities are all quasihomogeneous \index{quasihomogeneous} and hence right equivalence coincides with contact equivalence (see \cite[Lemma1.2.13\
 ]{GLS25}, which works for $K$ algebraically closed of  characteristic 0), finishing the proof. 
\end{proof}

The classification of hypersurface singularities in positive characteristic started with the  paper \cite[Theorem 1.4]{GrKr90}.

\begin{Theorem}[Contact-simple singularities in characteristic $\ge$ 0] \label{thm:GrKr}  Let $K$ be a real valued, algebraically closed field of characteristic $\ge 0$. 
The following are equivalent for a hypersurface singularity 
\mbox{$f \in  K\{x_1, \ldots, x_n\}$}.
\begin{enumerate}
\item [(1)] $f$ is contact--simple,
\item [(2)] $f$ is an ADE--singularity (with few extra normal forms in small characteristics). That is, $f$ is contact equivalent to a power series form the following lists I.1 -- II.3.
\end{enumerate}
\end{Theorem}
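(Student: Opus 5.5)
We first reduce to the case of at most two variables. By Lemma \ref{lem:simple-is-isolated}, a contact simple $f$ has $\tau(f)<\infty$. By Theorem \ref{thm:hypersurface} it is then finitely contact determined, so all the arguments below can be carried out on jets in $J^{(k)}$, acted on by the algebraic group $\kk^{(k)}$. Next we apply the splitting lemma. We use Theorem \ref{thm.split} together with Corollary \ref{cor.split}, which applies because $K$ is algebraically closed. This gives
\[
f\rsim x_{r+1}^2+\cdots+x_n^2+g(x_1,\ldots,x_r)
\]
for $p\neq 2$. For $p=2$ we get the normal forms (a) or (b) of the corollary, with residual part $g$.

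We then need a stability statement: $f$ is contact simple if and only if its residual part $g$ is. For the direction "$g$ not simple $\Rightarrow$ $f$ not simple" it suffices to add the quadratic part to a modular family of $g$, and uniqueness of the residual part (Proposition \ref{prop.split}) keeps the resulting members inequivalent. The converse is handled by the semiuniversal deformation (Theorem \ref{thm.sudef}) together with Lemma \ref{lem:findeftype}: the deformations of $f$ can be induced from those of $g$ plus the quadratic part. Now suppose the corank $r$ is at least $3$, so that $g\in\fm^3$. Then the $3$-jets of $g$ form the space of cubic forms in $r\ge 3$ variables. This space has dimension at least $10$, while the group $GL_r\times K^*$ acting on it has dimension $r^2+1$. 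For $r=3$ the count is $10>9+1-1$: the scalar from $K^*$ and a scalar in $GL_r$ act identically up to the overlap, so there is effectively a one-dimensional modulus. For $r>3$ the modulus count only grows. This j-invariant type modulus, together with the semicontinuity in Proposition \ref{prop.semcont}, shows that $f$ is not simple. Hence $r\le 2$. In characteristic $2$ one has to treat the $d_i$-terms separately, since they produce the extra "$+x_{2l+1}^2$" normal forms.

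For $r\le 2$ we run Arnold's determinator on the jets of $g\in K\{x,y\}$, using the determinacy bounds of Theorem \ref{thm:finitedeterminacybound}. If the $2$-jet has rank $1$, completing the square or using an Artin–Schreier type argument when $p=2$ gives $A_k$. If the $2$-jet vanishes, we look at the cubic form. Three distinct linear factors give $D_4$. A double factor gives the $D_k$ series, obtained by blowing up along the Newton diagram. A triple factor $x^3$ leads to $E_6,E_7,E_8$ or to a non-simple case. In the non-simple cases the $4$-jet is $0$ or the form is $x^3+$ higher terms beyond $E_8$, and these adjoin the unimodal families $x^4+y^4+tx^2y^2$ or $x^3+y^6+tx^2y^2$, which are not simple. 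In positive characteristic the normal forms must be computed by hand. The reason is that Taylor's formula (Remark \ref{rm.taylor}) loses terms whenever $p$ divides a binomial coefficient. As a result extra forms appear for $p=2,3,5$, for instance $E_8^0,E_8^1$ when $p\le 5$ and $D_{2m}^r$ when $p=2$. At each branch we check that the determinacy bound computed from $\fm\langle f\rangle+\fm^2 j(f)$ fixes the normal form. Conversely, to show that every listed form is simple, we compute a $K$-basis of $T_f$ and use Theorem \ref{thm.sudef} and Lemma \ref{lem:findeftype}. Every member of the semiuniversal deformation near $0$ is again on the list, which can be seen from semicontinuity of $\tau$ and adjacency of the normal forms. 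So finitely many classes suffice.

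The main obstacle is this last part in small characteristic. The determinacy bounds of Theorem \ref{thm:finitedeterminacybound} are weaker than in characteristic $0$. Quasihomogeneity no longer makes $\tau$ equal to $\mu$. The characteristic-$2$ splitting leaves the extra $d_i x_i^2$ terms. For these reasons the $E$-series and the $p=2$ $D$-series need explicit, case-by-case coordinate changes, and each one must be checked with a determinacy computation of the type shown in Example \ref{ex:determ}. I would follow the case analysis of \cite{GrKr90} there and only transport it to $K\{x\}$. The transport uses Artin approximation exactly as in the proof of Proposition \ref{prop.MY0}: a formal contact equivalence to a polynomial normal form yields a convergent one.
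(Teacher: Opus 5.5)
Your proposal has a genuine gap in characteristic $2$. The step ``hence $r\le 2$'' is false there, and it contradicts the list you are trying to prove.

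\textbf{The corank reduction fails for $p=2$.} In characteristic $2$, Corollary~\ref{cor.split}(a) leaves the residual part $x_{2l+1}^2+g(x_{2l+1},\ldots,x_n)$. This is not in $\fm^3$, so your cubic-form count does not apply to it. For example, $E_8^0=z^2+x^3+y^5\in K\{x,y,z\}$ has Hessian matrix $0$, because $\partial^2(z^2)/\partial z^2=2=0$. So $r=3$, yet it is contact simple. More generally the Hessian rank is even, so $r\equiv n \bmod 2$. For odd $n$ your argument would leave only $r=1$, i.e.\ only the $A_k$, and you would lose all of Table II.2 and the second row of II.3.

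The square cannot be split off either. You cannot complete $z^2+zh_1(x,y)$ to a square. Moreover $z^2+g(x,y)$ is usually not even isolated, since $\partial(z^2)/\partial z=0$; for instance $z^2+x^2+y^3$ has $\tau=\infty$. So the $d_i$-terms do not merely ``append $+x_{2l+1}^2$'' to two-variable normal forms. They leave a genuinely three-variable residual part $z^2+h(x,y,z)$ with $h\in\fm^3$. Its classification produces forms such as $D_{2m+1}^0=z^2+x^2y+y^mz$ and the $D_{2m}^r$, which you attributed to the two-variable determinator.

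Two things are missing here.
\begin{itemize}
\item \emph{The correct corank.} Let $W$ be the subspace of the radical of the polar form of $\jet_2(f)$ on which $\jet_2(f)$ itself vanishes. A coordinate change $x\mapsto l(x)+k(x)$ with $k\in\fm^2$ changes the cubic part by the polar form evaluated at $(l(x),k(x))$. A unit $u$ changes it by (linear part of $u$)$\cdot\jet_2(f)$. Both vanish on $W$, so the restriction of the cubic part to $W$ is a contact invariant up to $GL(W)\times K^*$. Simplicity therefore forces $\dim W\le 2$. This gives $r\le 2$ in case (b) but only $r\le 3$ in case (a).
\item \emph{A three-variable determinator} for $z^2+h(x,y,z)$, which is where Table II.2 comes from.
\end{itemize}
In addition, Proposition~\ref{prop.split} concerns right equivalence, so your stability argument needs a contact analogue of it.

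\textbf{The direction (2)$\Rightarrow$(1) is not yet an argument.} Semicontinuity of $\tau$ only bounds $\tau$ on nearby fibres. Bounded $\tau$ does not exclude moduli, since along a modular family $\tau$ is generically constant. ``Adjacency of the normal forms'' is exactly what has to be proved. What you need is an openness property of the ADE class. As the remark after the theorem indicates, \cite{GrKr90} obtains this from blow-up characterisations:
\begin{itemize}
\item in dimension $2$, Lipman's result that rational double points are exactly the absolutely isolated ones, together with Artin's list;
\item in dimension $1$, the Kiyek--Steinke criterion.
\end{itemize}
Alternatively, you would have to show that each branch of your determinator ending in the list is cut out by open conditions on jets, with $\tau$ bounding the index.

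\textbf{Minor points.} In characteristic $2$ your test family satisfies $x^4+y^4+tx^2y^2=(x^2+y^2+\sqrt{t}\,xy)^2$, so it is not reduced. Use four lines $xy(x+y)(x+ty)$ instead. Your transport of the formal classification to $K\{x\}$ is fine. The paper itself only cites \cite{GrKr90} and notes that the formal proof carries over.
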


\begin{itemize}
\item[I.] char$(K) \neq 2$\\
I.1 Dimension 1 \\

\centerline{\begin{tabular}[20pt]{|c|l l|}
\hline 
\ Name \ & \multicolumn{2}{l|}{\ Normal form for $f \in K\{x,y\}$}\\
\hline
        & & \\[-2ex]
$\mathrm{A}_k$&$\quad \quad\ \ \ x^2+y^{k+1}$&$  k\ge 1$\\
        && \\[-2ex]
\hline
        && \\[-2ex]
$\mathrm{D}_k$&$\quad\quad\ \ \ x^2y+y^{k-1}$ & $ k\ge 4$\\
        && \\[-2ex]
\hline
        && \\[-2ex]
$\mathrm{E}_6$& $\ E_6^0$ \quad $x^3+y^4$&\\
        && \\[-2ex]
      & $\ E_6^1$ \quad $x^3+y^4+x^2y^2$ & additionally in char = 3 \ \\
      && \\[-2ex]
\hline
        && \\[-2ex]
$\mathrm{E}_7$& $\ E_7^0$ \quad $x^3+xy^3$&\\
        && \\[-2ex]
  & $\ E_7^1$ \quad $x^3+xy^3 +x^2y^2$& additionally in char = 3 \ \\
        && \\[-2ex]
 \hline
        && \\[-2ex]       
$\mathrm{E}_8$&\ $E_8^0$ \quad $x^3+y^5$&\\
       && \\[-2.5ex]        
&
$\begin{rcases}
      \ E_8^1$  \quad $x^3+y^5+x^2y^3 & \\
      \ E_8^2$  \quad $x^3+y^5+x^2y^2 
\end{rcases}$
  & additionally in char = 3 \\
   & $\ E_8^1$  \quad $x^3+y^5+x^2y^2$& additionally in char = 5 \\ [3pt]       
\hline
\multicolumn{3}{c}{ }\\
\end{tabular}}

I.2 Dimension $\ge 2$\\

\centerline{\begin{tabular}[20pt]{|cl|}
\hline 
\multicolumn{2}{|c|}{\ Normal form for $f \in K\{x_1,...,x_n\}$ \ } \\
\hline 
        &  \\[-2ex]
$\ g(x_1,x_2)+x_3^2+\ldots+x_n^2$& \\[3pt]
\hline
\multicolumn{2}{c}{ }
\end{tabular}}
where $g \in K\{x_1,x_2\}$ is one of the singularities in Table I.1. The name of $f$ is that of $g$. \\

\item[II.]  char$(K) = 2$\\
II.1 Dimension 1\\

\centerline{\begin{tabular}[20pt]{|l|l l|}
\hline 
\ Name \ & \multicolumn{2}{l|}{\ Normal form for $f \in K\{x,y\}$}\\
\hline
        & & \\[-2ex]
$\ \mathrm{A}_{2m-1}$& $\quad\quad\quad\quad \ x^2+xy^m$ & \
    $m\ge 1$\\
        && \\[-2ex]
 $\ \mathrm{A}_{2m}$& $\ A_{2m}^0 \quad\quad \ x^2+y^{2m+1}$ &  \   
   $m\ge 1$\\
        && \\[-2ex]
  & $\ A_{2m}^r  \quad\quad \ x^2+y^{2m+1}+xy^{2m-r}$ & \
     $m\ge 1$,  $1\le r \le  m-1$ \ \\
    && \\[-2ex]
\hline
        && \\[-2ex]
$\ \mathrm{D}_{2m}$& $\quad\quad\quad\quad \ x^2y+y^{m}$ & \
$ m\ge 2$\\
        && \\[-2ex]
$\ \mathrm{D}_{2m+1}$& $\ D_{2m+1}^0  \quad \ x^2y+y^{2m}$ & \
$ m\ge 2$\\
        && \\[-2ex]
 &$\ D_{2m+1}^r  \quad \ x^2y+y^{2m}+xy^{2m-r}$ & \
    $ m\ge 2$,  $1\le r \le  m-1$ \ \\
        && \\[-2ex]
\hline
        && \\[-2ex]
$\ \mathrm{E}_6$& $\ E_6^0$ \quad\quad \ \ $x^3+y^4$&\\
        && \\[-2ex]
      & $\ E_6^1$ \quad\quad \ \ $x^3+y^4+xy^3$&  \\
      && \\[-2ex]
\hline
        && \\[-2ex]
$\ \mathrm{E}_7$& $\quad\quad\quad\quad x^3+xy^3$&\\
        && \\[-2ex]
 \hline
        && \\[-2ex]       
$\ \mathrm{E}_8$& $\quad\quad\quad\quad x^3+y^5$&\\
\hline
\multicolumn{3}{c}{ }\\
\end{tabular}}

II.2 Dimension 2

\centerline{\begin{tabular}[20pt]{|l|l l|}
\hline 
\ Name \ & \multicolumn{2}{l|}{\ Normal form for $f \in K\{x,y,z\}$}\\
\hline
        & & \\[-2ex]
$\ \mathrm{A}_{k}$& $\quad\quad\quad\quad \ z^{k+1}+xy$ & \
    $k\ge 1$\\
     && \\[-2ex]
\hline
        && \\[-2ex]
$\ \mathrm{D}_{2m}$& $\ D_{2m}^0  \quad\quad  z^2+x^2y+xy^{m}$ & \
$ m\ge 2$\\
        && \\[-2ex]
 & $\ D_{2m}^r  \quad\quad  z^2+x^2y+xy^{m}+xy^{m-r}z$ & \
    $ m\ge 2$,  $1\le r \le  m-1$ \ \\
        && \\[-2ex]
$\ \mathrm{D}_{2m+1}$& $\ D_{2m+1}^0  \quad \ z^2+x^2y+y^{m}z$ & \
$ m\ge 2$\\
        && \\[-2ex]
 &$\ D_{2m+1}^r  \quad \ z^2+x^2y+y^{m}z+xy^{m-r}z$ & \
    $ m\ge 2$,  $1\le r \le  m-1$ \ \\
        && \\[-2ex]
\hline
        && \\[-2ex]
$\ \mathrm{E}_6$& $\ E_6^0$ \quad\quad \ \ $z^2+x^3+y^2z$&\\
        && \\[-2ex]
      & $\ E_6^1$ \quad\quad \ \ $z^2+x^3+y^2z+xyz$&  \\
      && \\[-2ex]
\hline
        && \\[-2ex]
$\ \mathrm{E}_7$& $\ E_7^0$ \quad\quad \ \ $z^2+x^3+xy^3$&\\
        && \\[-2ex]
& $\ E_7^1$ \quad\quad \ \ $z^2+x^3+xy^3+x^2yz$&  \\
      && \\[-2ex]
& $\ E_7^2$ \quad\quad \ \ $z^2+x^3+xy^3+x^3z$&  \\
      && \\[-2ex]
& $\ E_7^3$ \quad\quad \ \ $z^2+x^3+xy^3+xyz$&  \\
      && \\[-2ex]
 \hline
        && \\[-2ex]       
$\ \mathrm{E}_8$& $\ E_8^0$ \quad\quad \ \ $z^2+x^3+y^5$&\\
        && \\[-2ex]
& $\ E_8^1$ \quad\quad \ \ $z^2+x^3+y^5+xy^3z$&  \\
      && \\[-2ex]
& $\ E_8^2$ \quad\quad \ \ $z^2+x^3+y^5+xy^2z$&  \\
      && \\[-2ex]
& $\ E_8^3$ \quad\quad \ \ $z^2+x^3+y^5+y^3z$&  \\
      && \\[-2ex]
& $\ E_8^4$ \quad\quad \ \ $z^2+x^3+y^5+xyz$&  
 \\[3pt]
\hline
\multicolumn{3}{c}{ }\\
\end{tabular}}

II.3 Dimension $\ge 3$\\

\centerline{\begin{tabular}[20pt]{|ll|}
\hline 
\multicolumn{2}{|l|}{\ Normal form for $f \in K\{x_1,...,x_n\}$ \ } \\
\hline 
        &  \\[-2ex]
$\ g(x_1,x_2)+x_3x_4+\ldots+x_{2k-1}x_{2k},$ &\ $n=2k, \ k\ge 2 \ $  
\\[3pt]
\hline
    &  \\[-2ex]
$\ g(x_1,x_2,x_3)+x_4x_5+\ldots+x_{2k}x_{2k+1},$ &\ $ n=2k, \ k\ge 2 \ $
 \\[3pt]
\hline
\multicolumn{2}{c}{ }\\
\end{tabular}}
where $g \in K\{x_1,x_2\}$ resp. $K\{x_1,x_2,x_3\}$ is one of the singularities in Table II.1. resp. II.2. The name of $f$ is that of $g$. 
\end{itemize}

For the proof of Theorem \ref{thm:GrKr} we refer to \cite{GrKr90}, where it is proved for formal power series in $K[[x]]$. The proof for $K\{x\}$ is basically the same.

\begin{Remark} {\em
\begin{enumerate}
\item The normal forms in dimension 2 are exactly the normal forms of rational double points which were classified by Artin \cite{Ar77}. Moreover, Lipman \cite{Lip1} showed that a two-dimensional double point is rational if and only if it is absolutely isolated, i.e. can be resolved by a finite sequence of blowing up points. This criterion is used for the proof that ADE-singularities are of finite deformation type.

\item The normal forms in dimension 1 are exactly the normal forms of power series $f$ which \\
\text{   } (a) are reduced, \\
\text{   } (b) have multiplicity 2 or 3 and \\
\text{   } (c) the reduced total transform of $f$ after one blowing up has also property (b). \\
This was proved by Kiyek and Steinke \cite{KS85}.
\item The upper index 0 denotes the classical normal form, which is the most special with respect to deformations.
\end {enumerate}
}
\end{Remark}

The proof of Theorem \ref{thm:GrKr} yields also a characterization of the non-isolated singularities $A_\infty$ \index{$Ainf$@$A_\infty$}
and $D_\infty$, \index{$Dinf$@$D_\infty$} which are natural limits of $A_k$ and $D_k$ if $k$ tends to infinity.

\begin{Definition}  \label{Ainf}
$f \in K[[x_1,...,x_n]]$ is of type 
${A}_\infty$ resp. ${D}_\infty$ if it is contact equivalent to one of the following normal forms:\\

char(K) $\ne 2$\\

\centerline{\begin{tabular}[20pt]{|l|l l|}
\hline 
\ Name \ & \multicolumn{2}{l|}{\ Normal form for $f \in K\{x_1,...,x_n\}$}\\
\hline
         && \\[-2ex]
$\ \mathrm{A}_\infty$& $\ \   x_2^2+ \ldots +x_n^2$  
&\\[3pt]
\hline
        && \\[-2ex]
$\ \mathrm{D}_\infty$& $\ \ x_1^2x_2+x_3^2+ \ldots + x_n^2$ 
&\\[3pt]
\hline
\multicolumn{2}{c}{ }\\
\end{tabular}}

char(K) $= 2$\\

\centerline{\begin{tabular}[20pt]{|l|l l|}
\hline 
\ Name \ & \multicolumn{2}{l|}{\ Normal form for $f \in K\{x_1,...,x_n\}$}\\
\hline
         && \\[-2ex]
$\ \mathrm{A}_\infty$& $\quad\quad\quad\quad  x_2^2+x_3x_4+ \ldots +x_{2k-1}x_{2k},$& \ $n=2k$  \\ 
&& \\[-2ex]
&$\quad\quad\quad\quad x_2x_3+x_4x_5+ \ldots +x_{2k}x_{2k+1},$& \ $n=2k+1$  \\ 
&& \\[-2ex]
\hline
        && \\[-2ex]
$\ \mathrm{D}_\infty$& $\ \ D_{\infty}^0  \quad\quad  x_1^2x_2+x_3x_4+ \ldots + x_{2k-1}x_{2k},$ & \  $n=2k$ \\
        && \\[-2ex]
                             &   $\quad\quad\quad\quad x_1^2x_2+x_3^2+ x_4x_5\ldots + x_{2k}x_{2k+1},$ & \  $n=2k+1$\\
        && \\[-2ex]
                             &   $\ \ D_{\infty}^m  \quad\quad x_1^2x_2+x_3^2+ x_1x_2^mx_3+x_4x_5\ldots + x_{2k}x_{2k+1},$ & \  $n=2k+1, m \ge 1$                           
\\[3pt]
\hline
\multicolumn{3}{c}{ }\\
\end{tabular}}
\end{Definition}

For the characterization of  $A_\infty$
and $D_\infty$ we introduce the following definition:
 Call $f\in K\{x\}$ {\it 0-modular}\index{0-modular}  or of {\it 0-modular deformation type}, if for each $k$ there is an open neighbourhood $U_k$ of the $k$-jet $f^{(k)}$ in  the space of $k$-jets $J^{(k)}$ such that $U_k$ meets only finitely many contact orbits of $k$-jets (their number may depend on $k$). Note that 0-modular hypersurface singularities are necessarily of {\em countable deformation type}\index{countable!deformation type}\index{deformation type!countable}, the converse being true for not countable fields (cf.  \cite[Theorem B]{BGS87}) (and meaningless for countable fields).
 Isolated singularities are of {\em finite deformation type}\index{deformation type!finite} iff they are 0-modular (since they are finitely determined) but for non-isolated singularities the following theorem (due to \cite[Theorem 1.6]{GrKr90}) applies.

\begin{Theorem}[Characterization of $A_\infty$ and $D_\infty$] \label{AD-infty}
The following statements about $f\in K\{x_1,..., x_n\}$ are equivalent: 
\begin {enumerate}
\item $f$ is of type $A_\infty$ or $D_\infty$.
\item f is 0-modular but not of finite deformation type.
\end {enumerate}
\end{Theorem}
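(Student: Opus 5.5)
We prove the two implications separately, working throughout with contact equivalence and $K$ algebraically closed.

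\emph{(1) $\Rightarrow$ (2).} Since $A_\infty$ and $D_\infty$ are non-isolated ($\tau=\infty$), Lemma \ref{lem:simple-is-isolated} shows they are not of finite deformation type. It remains to show they are 0-modular. Fix $k$. By the proof of Theorem \ref{thm:GrKr}, every $k$-jet near $f^{(k)}$ of type $A_\infty$ (resp.\ $D_\infty$) is the $k$-jet of one of the normal forms $A_j$ or $D_j$ (with their superscript variants in characteristic $2$), or of $A_\infty$ or $D_\infty$ themselves. This is because a small perturbation of $x_2^2+q$ keeps the non-degenerate quadratic part, so by the splitting lemma (Theorem \ref{thm.split}, Corollary \ref{cor.split}) it reduces to a one-variable series $x_1^{j}\cdot$unit, that is, to $A_{j-1}$. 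Similarly, a perturbation of $x_1^2x_2+q$ has residual part with $3$-jet $x_1^2x_2$ or a degeneration of it of $A$-type, and the $D$-classification gives $D_j$. For $j>k$ (up to a constant) all these $k$-jets coincide with $f^{(k)}$. Hence a Zariski neighbourhood $U_k$ meets only finitely many orbits, namely those of $A_j,D_j$ with $j\le k+c$ together with the finitely many superscript variants $r$.

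\emph{(2) $\Rightarrow$ (1).} Let $f$ be 0-modular but not of finite deformation type. By Lemma \ref{lem:findeftype} and the finite determinacy of isolated singularities (Theorem \ref{thm:hypersurface}), $f$ cannot be isolated: an isolated 0-modular $f$ is finitely determined, so 0-modularity at the determinacy level gives finite deformation type. Thus $\tau(f)=\infty$. Now we run the classification algorithm of Theorem \ref{thm:GrKr} on $f$ and track which branch it takes.

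\begin{itemize}
\item First, $\ord(f)\le 2$: if $\ord f\ge 3$ then, as in the non-simple case of the classification, the $3$-jets near $f^{(3)}$ contain a family with a modulus (e.g.\ $\tilde E_6$-type cubics or $x^3+y^3+z^3+\lambda xyz$ analogues), which is incompatible with 0-modularity.
\item By the splitting lemma, $f\sim q+g$ with $g$ the residual part. If the corank is $\ge 3$, the $3$-jet of $g$ again carries moduli. So the corank is at most $2$ (at most $3$ in characteristic $2$, with the variants in Corollary \ref{cor.split}).
\item If the corank is $1$, then $g(x_1)$ is non-isolated, so $g=0$ and $f$ is $A_\infty$.
\item If the corank is $2$, the $3$-jet of $g$ must be of type $x_1^2x_2$ or a degeneration, since $x_1^3$ or $0$ lead via the $E$- and $J_{10}$-analysis to moduli. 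Following the $D_k$ normal-form reduction, infinite $k$ forces $g\sim x_1^2x_2$ exactly, or $D_\infty^m$ in characteristic $2$.
\end{itemize}

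The main obstacle is the last step: showing that non-isolatedness within the $D$-branch forces the exact normal forms, notably the extra families $D_\infty^m$ in characteristic $2$. Here we would redo the determinacy-free reduction of \cite{GrKr90} by successive coordinate changes converging formally, and then pass to convergent coordinate changes by Artin approximation, exactly as in the proof of Proposition \ref{prop.MY0}.
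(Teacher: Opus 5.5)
Your route is essentially the paper's. For (1)$\Rightarrow$(2) you use that $A_\infty$ and $D_\infty$ are adjacent only to $A_j$ and $D_j$, which the paper merely asserts; your use of Lemma \ref{lem:simple-is-isolated} for ``not of finite deformation type'' is fine, whereas the paper reads this off from the adjacencies. For (2)$\Rightarrow$(1) you split off the nondegenerate quadratic part and exclude the remaining cases by moduli of the initial form, which is the paper's reduction to $\mathrm{mult}(f)>2$ followed by the distinction $n\ge 3$ versus $n=2$.

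However, your first bullet in (2)$\Rightarrow$(1) is false as stated. Order $\ge 3$ is \emph{not} incompatible with 0-modularity when $n=2$. Binary cubics fall into finitely many orbits ($l_1l_2l_3$, $l_1^2l_2$, $l^3$, $0$) in every characteristic, so no modulus is visible on $3$-jets. In fact $D_\infty=x_1^2x_2\in K\{x_1,x_2\}$ has order $3$ and is 0-modular by your own (1)$\Rightarrow$(2). Read literally, this bullet excludes $D_\infty$ in two variables, so it contradicts the statement you are proving. The cubic-moduli argument is valid only for cubics in at least three variables, and that case is already your corank bullet.

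So delete the first bullet and let the case $n=2$, $\ord(f)\ge 3$ run through your corank-$2$ bullet. There the moduli come from binary quartics (order $\ge 4$), or from the $J_{10}$-family when the $3$-jet is $l^3$; this is the paper's cross-ratio argument with $x(x+\lambda y^2)(x+\lambda^2y^2)$. In that bullet, ``or a degeneration'' should read ``or three distinct lines'' ($D_4$, which is isolated and hence excluded). The degenerations $x_1^3$ and $0$ are exactly what you rule out.

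Two smaller points. First, in characteristic $2$ your corank bookkeeping is off. The residual part may have a nonzero $2$-jet $l^2$; this is how $A_\infty$ with $n$ even arises, at Hessian corank $2$, and your $3$-jet analysis does not see it. Moreover, $D_\infty^m$ lives at Hessian corank $3$, not $2$, and you never treat that case. The paper also only refers to \cite{GrKr90} here, but you should not claim these cases are covered.

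Second, in characteristic $\ne 2$ the step you call the main obstacle needs no formal limit and no Artin approximation. Since $K$ is perfect, $\tau(g)=\infty$ for a plane curve germ means $g$ is non-reduced. Comparing initial forms, a $3$-jet $x_1^2x_2$ then forces $g=u\,g_1^2g_2$ with $g_1,g_2$ smooth and transversal. Hence $g\overset{c}{\sim}x_1^2x_2$; the paper absorbs this into its appeal to \cite{BGS87}.
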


\begin{proof} It is not difficult to see that $A_\infty$ deforms  (only) to $A_k, k\ge 1$ and $D_\infty$ only to $A_k$ and $D_k$, $k\ge 4$. Hence  they are 0-modular but not of finite  deformation type. 

For the onverse we assume that $\Char(K) \ne 2$. Assume that $f$ is 0-modular, but neither simple nor $A_\infty$ or $D_\infty$.  Since the deformation theory of $f$ and $f+ x_{n+1}^2$ is essentially the same, we may assume that mult$(f)>2$. 
If $n \ge 3$ or if $n=2$ and mult$(f)>3$, the initial part of $f$ defines a projective variety which has moduli, hence the singularity defined by $f$ itself is not 0-modular. In case $n=2$, mult$(f)=3$, we may assume by \cite[Lemma 3.4]{BGS87} that, in suitable coordinates 
$x, y$
$$f(x, y)=x^3+ax^2y^2+bxy^4 +cy^6$$
with $a, b, c \in K\{x, y\}$. Replacing $x^3$ by 
$x(x+ \lambda y^2)(x+\lambda^2y^2)$, $\lambda \in K$, gives a family of non-isomorphic singularities (blowing up two times exhibits exactly four points on a rational component of the exceptional divisor, whose cross-ratio varies with $\lambda$). This shows that $f$ is not 0-modular, contradicting the assumption. Thus $f$ is either simple or $A_\infty$ or $D_\infty$.
The case $\Char(K) = 2$ can be proved using similar arguments as in \cite[Section 3.5]{GrKr90}.
\end{proof}


\section {Right Simple Singularities in Positive Characteristic}\label{sec.class2}

In this section let $K$ be be an algebraically closed, real valued field of characteristic $p>0$ (if not said otherwise). We classify right simple hypersurface singularities $f\in \mathfrak{m}\subseteq K\{x\}=K\{x_1,\ldots,x_n\}$, following mainly  \cite{GN16} and \cite{Ng12}. For the classification of contact simple singularities see the previous Section \ref{sec.class1}. In contrast to $\mathrm{char}(K)=0$, where the classification of right simple and contact simple singularities coincides (Theorem \ref{thm:ADE-char0}), the right classification is very different from the contact classification in positive characteristic. For example, for every fixed $p>0$, there are only finitely many classes of right simple singularities. For $p=2$ and $n$ even only the $A_1$-singularity 
$x_1x_2+x_3x_4+\ldots+x_{n-1}x_n$
is right simple, while for $n>1$ and odd no right simple singularity exists.

By Definition \ref{def.simple}  $f$ is right simple iff there is a neighbourhood $U$ of $f$ in $K\{x\}$ which meets only finitely many $\mathcal{R}$-orbits, with $\kr = \Aut_KK\{x\}$ the right group.  
By Lemma \ref{lem:findeftype} this means that $f$ is of finite deformation type for right equivalence.

The classification of right simple singularities is summarized in Theorems \ref{thm1.5.0}, \ref{thm1.5.1} and \ref{thm1.5.2}.
Note that $f\in\mathfrak{m}\setminus\mathfrak{m}^2$ iff the Milnor number (Definition \ref{def:mu_pos}) satisfies $\mu(f)=0$, and then $f\overset{r }{\sim} x_1$ (hence right simple) by the implicit function theorem. We may therefore assume in the following that $f\in\mathfrak{m}^2$. 

We end this section with a conjecture about the relation between the Milnor number $\mu$ and the right-modality if $\mu$ goes to infinity.\\
 
We start with the classification of univariate power series  
$$f = \sum_{n\ge0} c_n x^n \in K\{x\},$$
$K\{x\}$ being  the ring of convergent power series in one variable $x$. Since $f=ux^m$, $m$ the order of $f$ and $u$ a unit, and since any deformation of $f$ has an order  $\le m$, it follows that $f$ is contact simple.
This is however not the case for right equivalence.\\

The complete classification with respect to  right equivalence of univariate singularities in positive characteristic of {\em any $\kr$-modality} \footnote{The $G$-modality of $f \in K\{x\}$ is, roughly speaking, at most $k$, if a neighbourhood of $f$ in $K\{x\}$ can be covered by finitely many at most $k$-dimensional families of orbits of the group $G$. Hence $f$ is $G$-simple iff the $G$-modality is 0.
For a rigorous definition of modality we refer to \cite{GN16} or \cite[Section 3.3.2]{GLS25}.}, $\kr$ the right goup, was given in  \cite{Ng12} (see also  \cite{Ng13}). We present here only the main results.

Let  $\supp(f) = \{n\mid c_n\ne 0\}$, and $\mt(f)=\min \{n \mid n \in \supp{f}\}$ the {\em multiplicity} or {\em order} of $f$. For each $n\in \N$ we set 
\begin{enumerate}
\item $e(n):=\max \{i\mid p^i \text{ divides } n\}$
\item $e:=\min \{e(n)\mid n \in \supp(f)\}$
\item $q:= \min \{n \in \supp(f)\mid e(n)=e\}$
\item $k:=1$ if $\mt(f)=q$, otherwise \\
$k := \max\{k(n)\mid \mt(f) \le n<q, n\in
\supp(f) \}$, 
where $k(n) := \lceil \frac{q-n}{p^{e(n)}-p^e} \rceil $
\end{enumerate}

\begin{Proposition} \label{prop.5.1}
With the notatios above, if $\mu(f) < \infty$ then $d = q+p^e(k-1)$ is exactly the right determinacy of $f$.
\end{Proposition}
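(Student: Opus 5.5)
In one variable the data simplify considerably, and I would first record this. Since $\mu(f)<\infty$ means $f'\neq 0$, some exponent in $\supp(f)$ is prime to $p$. Hence $e=0$ and $p^e=1$. Then $q$ is the smallest exponent with $p\nmid q$, and $\mu(f)=\ord(f')=q-1$. Every $n\in\supp(f)$ with $n<q$ has $e(n)\ge 1$, and the claim reads
\[
d=q+k-1,\qquad k=\max_{n<q,\ n\in\supp(f)}\Big\lceil \tfrac{q-n}{p^{e(n)}-1}\Big\rceil ,
\]
with $k=1$ if $\mt(f)=q$. The tool throughout is Taylor's formula in the form of Remark \ref{rm.taylor}. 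For a monomial $x^n$ with $n=p^{e(n)}m$ we have $(x+z)^n=(x^{p^{e(n)}}+z^{p^{e(n)}})^m$. So if $z$ has order $s$ and $p\mid n$, then $(x+z)^n-x^n$ has order exactly $n-p^{e(n)}+p^{e(n)}s$ when $p\nmid m$, and larger otherwise. If $p\nmid n$, then $(x+z)^n=x^n+nx^{n-1}z+O(x^{n-2+2s})$.

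\emph{Upper bound ($f$ is right $d$-determined).} Let $g=f+h$ with $\ord(h)\ge d+1$. I would solve $f(x+z)=g$ order by order and then apply Grauert's Approximation Theorem \ref{grauertapprox}, exactly as in the proof of Theorem \ref{thm:finitedeterminacybound}.

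Suppose $z^{(N)}$ solves the equation modulo $x^{N+1}$, with $N\ge d$, and let the discrepancy be $cx^{N+1}$. Write $f'=ux^{q-1}$ with $u$ a unit. I correct by $z\mapsto z-c\,u(0)^{-1}x^{N+2-q}$; this correction has order $s=N+2-q\ge k+1$. Its linear effect removes $cx^{N+1}$, and I must check that all other new terms have order $\ge N+2$:
\begin{itemize}
\item For monomials $x^n$ with $n\ge q$, the new terms are of order $\ge n-1+s$ or $n-2+2s$, hence $\ge N+1$, with equality only in harmless cases; I will check these carefully but expect no difficulty.
\item For monomials $x^n$ with $n<q$, the new term has order $n+p^{e(n)}(s-1)$. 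This is $\ge N+2$ iff $(p^{e(n)}-1)(s-1)\ge q-n$, i.e. iff $s-1\ge \frac{q-n}{p^{e(n)}-1}$. This holds since $s-1\ge k\ge k(n)$.
\item Mixed terms coming from $z^{(N)}$ already having positive order are handled as in the proof of Theorem \ref{thm:finitedeterminacybound}, using $\ord(z^{(N)}-z^{(d)})\ge s$.
\end{itemize}
Grauert's Theorem \ref{grauertapprox} then gives a convergent $z$, hence an automorphism $x\mapsto x+z$ with $f(x+z)=g$.

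\emph{Sharpness ($f$ is not $(d-1)$-determined).} This is the step I expect to be the main obstacle. If $k=1$ then $d=q=\mu+1$. In that case I take $g=f+ax^{q}$ and compare the coefficient of $x^q$ after normalizing the leading terms; I expect the claim to fall out of that comparison. For $k>1$, choose $n_0<q$ in $\supp(f)$ with $k(n_0)=k$. I will show that $f+ax^{d}$ is not right equivalent to $f$ for generic $a\in K$.

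Suppose $\varphi(x)=\lambda x+z$ satisfies $f(\varphi)=f+ax^d$. Comparing jets of order $<d$ will force $\lambda$ to lie in a finite set, namely certain roots of unity given by the exponents in $\supp(f)$. Inductively it will also force the low-order coefficients of $z$, up to order about $k$. Here $k$ is exactly the threshold at which a degree-$s$ correction starts to interfere through $x^{n_0}$ before it can act linearly through $x^{q-1}$.

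At the critical degree $s=k$, the coefficient $b$ of $x^{s}$ in $z$ then enters the coefficient of $x^d$ through an additive polynomial in $b$, of the form $\alpha b+\sum\beta_n b^{p^{e(n)}}$. Its linear part comes from $f'$; its $p$-power parts come from the $n$ with $k(n)=k$ and equality in $n+p^{e(n)}(k-1)\le d$. The delicate bookkeeping is to confirm that this polynomial, together with the earlier forced constraints, cannot reach every value $a$. Alternatively, via an orbit-dimension count of $\kr^{(d)}$ acting on the jets $f^{(d-1)}+Kx^d$, I would show that only finitely many (or a proper closed set of) values of $a$ lie in the orbit of $f$. Getting the exact ceiling in $k(n)$, including the boundary case where $(q-n)/(p^{e(n)}-1)$ is an integer, is where I expect the real care to be needed.
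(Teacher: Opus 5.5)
Your reduction to $e=0$, $p^e=1$, $q=\mu(f)+1$ is correct. The proposal nevertheless has two genuine gaps: one in the upper bound and one in sharpness.

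\textbf{Upper bound: an off-by-one error hides a nonlinear step.} Write $n=p^{e(n)}m_n$ with $p\nmid m_n$. For $n<q$ the new term is $m_nc_nx^{n-p^{e(n)}}w^{p^{e(n)}}$, of order $n+p^{e(n)}(s-1)$. Since $N+2=q+s$, this order is $\ge N+2$ iff $(p^{e(n)}-1)(s-1)\ge q-n+1$, not $\ge q-n$. So $s-1\ge k\ge k(n)$ does not suffice. At the very first step $N=d$, $s=k+1$, every $n$ with $q-n=k(p^{e(n)}-1)$ (the boundary case of the ceiling) contributes in degree exactly $d+1$. The coefficient $b$ of the correction $bx^{k+1}$ must then solve the additive equation $u(0)b+\sum_n m_nc_nb^{p^{e(n)}}=c$. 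This equation is solvable only because $K$ is algebraically closed (the polynomial is nonconstant since $u(0)\neq0$). Your argument never uses that hypothesis, yet the statement fails without it. Over $\mathbb{F}_3$ with the trivial valuation, $f=x^3+x^5$ has $q=5$, $k=1$, $d=5$. Any $\varphi$ with $f(\varphi)\equiv f$ mod $x^6$ has the form $x+bx^2+\dots$, and the $x^6$-coefficient of $f(\varphi)$ is $b^3+5b=b^3-b=0$. Hence $f+x^6$ is not right equivalent to $f$. For $N\ge d+1$ one has $s-1\ge k+1$, the strict inequality holds, and your linear corrections work, so only this first step needs separate treatment. Also, $\ord(z^{(N)}-z^{(d)})\ge s$ is false: $z^{(d)}=0$ and $z^{(N)}$ has order $k+1$. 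The mixed terms are harmless for other reasons.

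\textbf{Sharpness.} For $k=1$ your test series fails when $\mt(f)=q$. For $f=x^q$, $f+ax^q=(1+a)x^q$ is right equivalent to $f$ via $x\mapsto\lambda x$ with $\lambda^q=1+a$. Use instead that $\jet_{q-1}(f)\in K[x^p]$ has $\mu=\infty\neq\mu(f)$; this gives $d\ge q$ in all cases. For $k\ge2$ you only give a plan, and the ``additive polynomial in degree $d$'' picture is off. A $p$-power of $b_k$ lands exactly in degree $d$ only for $n$ with $q-n=(k-1)(p^{e(n)}-1)$, never for $n$ with $k(n)=k$, and no analysis in degree $d$ is needed. The missing argument is as follows. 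Write $\varphi=\lambda x+\sum_{i\ge2}b_ix^i$; in $\varphi^n=(\varphi^{p^{e(n)}})^{m_n}$ the coefficient $b_i$ first occurs in degree $n+p^{e(n)}(i-1)$.

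(i) For $i\ge k+1$ this degree is $\ge q+k=d+1$. For $n<q$ this follows from $(p^{e(n)}-1)k\ge q-n$, which is the definition of $k$; for $n\ge q$ it is trivial. So $\jet_d f(\varphi)$ depends only on $\lambda,b_2,\dots,b_k$.

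(ii) Suppose $\jet_{d-1}f(\varphi)=\jet_{d-1}f$. Then $\lambda^{\mt(f)}=1$. For $2\le i\le k$, $b_i$ first occurs in degree $D_i=\min_{n<q}\bigl(n+p^{e(n)}(i-1)\bigr)\le n_0+p^{e(n_0)}(k-1)<d$. It enters there through the nonconstant polynomial $\sum m_nc_n\lambda^{n-p^{e(n)}}b_i^{p^{e(n)}}$ plus terms in $\lambda,b_2,\dots,b_{i-1}$; the $f'$-term appears only in degree $q-1+i>D_i$. Inductively, $b_2,\dots,b_k$ therefore lie in finite sets.

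Consequently the $x^d$-coefficient of $f(\varphi)$ takes only finitely many values, and $f+ax^d$ is not right equivalent to $f$ for all but finitely many $a$. Drop the orbit-dimension alternative: in characteristic $p$ orbit maps need not be separable, so tangent-space counts are unreliable.
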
 
For the proof of the proposition see \cite[Proposition 2.8]{Ng12}. 
 Note that $q = \mu(f) + 1$  is the first exponent in the expansion of $f$ which is not divisible by $p$. In particular, if $p\nmid \mt(f)$ then $\mt(f)$ is the right determinacy of $f$ and $f\overset{r }{\sim} x^{\mt(f)}$.

Furthermore,  in \cite[Theorem 2.11]{Ng12}  the author proves a precise normal form of univariate power series (which we omit here, since the formulation is quite complicated).

Another main result of that paper is the following theorem.

\begin{Theorem}\label{thm1.5.0}
Let $f\in\mathfrak{m}^2\subseteq K\{x\}$ be a univariate power series such that its Milnor number $\mu:=\mu(f)$ is finite. Then the right modality of $f$ is
$$\mathcal{R}\text{-}\mathrm{mod}(f)=[\mu/p], $$
the integer part of $\mu/p$. \\
In particular, $f$ is right simple if and only if $\mu<p$, and then $f\overset{r }{\sim} x^{\mu+1}$.
\end{Theorem}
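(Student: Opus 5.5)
The plan is to compute the modality directly in a jet space $J^{(N)}$ with $N$ large: stratify by the Milnor number, then compare the dimension of each stratum with the dimension of the $\kr^{(N)}$-orbits inside it. Throughout, write $f=\sum c_nx^n$ with $\mu=\mu(f)<\infty$. As noted after Proposition \ref{prop.5.1}, $q=\mu+1$ is the first exponent of $f$ not divisible by $p$. Hence
$$f'=\sum_{p\nmid n}nc_nx^{n-1}=u\,x^{\mu},\qquad u\in K\{x\}^*.$$
By Proposition \ref{prop.5.1}, $f$ is $d$-determined with $d=q+p^e(k-1)$. Moreover $d$ is bounded by a function of $\mu$ and $p$ alone, since $e(n)\le\log_p n$ and $n<q$. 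So we may choose one $N$ that works for all $g$ near $f$, because these satisfy $\mu(g)\le\mu$ by semicontinuity (Proposition \ref{prop.semcont}). By Lemma \ref{lem:findeftype} and the remark after Definition \ref{def.simple}, it then suffices to count orbits of $N$-jets near $f^{(N)}$.

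\emph{Stratification.} For $m\le\mu$ let $S_m\subseteq\fm^2/\fm^{N+1}$ be the set of jets $g$ with $\mu(g)=m$. Such $g$ satisfy $c_n(g)=0$ for all $n\le m$ with $p\nmid n$, and $c_{m+1}(g)\ne0$. So $S_m$ is locally closed, and its coordinates are:
\begin{itemize}
\item[(a)] the $\lfloor m/p\rfloor$ coefficients $c_{pj}$ with $2\le pj\le m$, which are free;
\item[(b)] the coefficient $c_{m+1}\neq 0$;
\item[(c)] all coefficients $c_n$ with $m+1<n\le N$, which are free.
\end{itemize}
This gives $\dim S_m=N-m+\lfloor m/p\rfloor$. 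The tangent space to the orbit at $g\in S_m$ is the image of the Lie algebra, namely $\fm\, g'=\fm^{m+1}$ modulo $\fm^{N+1}$, which has dimension $N-m$. The aim is to show that every orbit in $S_m$ has dimension exactly $N-m$. Then each stratum is a $\lfloor m/p\rfloor$-parameter family of orbits. Since $\lfloor m/p\rfloor\le\lfloor\mu/p\rfloor$ for all strata meeting a neighbourhood of $f$, and $f$ itself lies in $S_\mu$, this yields $\kr\text{-}\mathrm{mod}(f)=\lfloor\mu/p\rfloor$.

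\emph{Orbit dimension (main obstacle).} In characteristic $p$ the orbit map may be inseparable, so the Lie-algebra image only gives $\dim T\ge N-m$ for the tangent space. The true orbit dimension could a priori be larger, i.e. the group could move the coefficients in (a). I would handle this by hand using Frobenius. Write $g=G(x^p)+x^{m+1}h(x)$ with $G(x^p)=\sum_{pj\le m}c_{pj}x^{pj}$. For $\varphi=a_1x+a_2x^2+\dots$ we have $\varphi^p=\sum a_i^px^{ip}\in K\{x^p\}$. Therefore the coefficients of $x^{pj}$ with $pj\le m$ in $g(\varphi)$ are obtained from $G(\varphi^p)$ plus contributions of order $\ge m+1$, and so they depend only on the finitely many coefficients $a_1,\dots,a_{\lfloor m/p\rfloor}$ of $\varphi$. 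I would first try to show that these contributions are invisible below degree $m+1$. Then the map from the group to the (a)-coordinates factors through a group of dimension at most $\lfloor m/p\rfloor$ acting on $\lfloor m/p\rfloor$ coordinates. This is still not enough by itself. I would then use the leading coefficient $c_{m+1}$, which transforms as $c_{m+1}a_1^{m+1}$, and show that fixing it (up to finitely many $a_1$) leaves the (a)-coordinates varying in a family of dimension $<\lfloor m/p\rfloor$. Concretely, I expect to exhibit $\lfloor m/p\rfloor$ independent invariants, namely ratios of $c_{pj}$ weighted by powers of $c_{m+1}$ after normalising $c_{m+1}=1$. If this direct computation becomes messy, a fallback is to use the explicit normal form of \cite[Theorem 2.11]{Ng12}. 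That normal form shows that the parameters $c_{pj}$ in $x^{m+1}+\sum_{pj\le m}c_{pj}x^{pj}$ are determined up to finitely many choices.

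\emph{Consequences.} Once the orbit dimension is established, the modality of $f$ is $\lfloor\mu/p\rfloor$. In particular $f$ is right simple if and only if $\lfloor\mu/p\rfloor=0$, i.e.\ $\mu<p$. In that case no exponent $pj$ lies in $[2,\mu]$, so $\mt(f)=q=\mu+1$ and $k=1$. Proposition \ref{prop.5.1} then gives determinacy $d=q$. Hence $f\overset{r}{\sim}x^{\mu+1}$, after rescaling $x$ by a $(\mu+1)$-th root of the leading coefficient, which exists because $K$ is algebraically closed.
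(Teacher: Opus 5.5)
\textbf{There is a genuine gap: the upper bound on the orbit dimension is not proved, and your proposed method for it fails.}

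Your outline is sound. The stratum dimension $\dim S_m=N-m+\lfloor m/p\rfloor$ and the lower bound $\dim\kr^{(N)}g\ge N-m$ from the Lie algebra image $\fm g'=\fm^{m+1}$ are correct. If every orbit in $S_m$ had dimension exactly $N-m$, you would get the full formula $\mathcal{R}\text{-}\mathrm{mod}(f)=\lfloor\mu/p\rfloor$. But the decisive step is the \emph{upper} bound $\dim\kr^{(N)}g\le N-m$, and your sketch for it breaks down once $\lfloor m/p\rfloor\ge 2$.

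The reason is that the coordinates $c_{pj}$ ($pj\le m$) together with $c_{m+1}$ do not carry $\lfloor m/p\rfloor$ independent invariants. Since $\varphi^p=\sum_i a_i^px^{ip}$, a nonzero $c_p$ lets $a_2,a_3,\dots$ move $c_{2p},c_{3p},\dots$ freely. Take $p=2$, $m=4$ and $g=c_2x^2+c_4x^4+c_5x^5+\dots$. Then $(c_2,c_4,c_5)\mapsto(a_1^2c_2,\ a_2^2c_2+a_1^4c_4,\ a_1^5c_5)$, so on $\{c_2\ne0\}$ the only invariant of these coordinates is $c_2^5/c_5^2$. The second modulus involves higher coefficients. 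For $g=x^2+x^5$, the $7$-jet of $g(\varphi)$ is $a_1^2x^2+a_2^2x^4+a_1^5x^5+(a_3^2+a_1^4a_2)x^6+a_1^4a_3x^7$. So the group does move the (a)-coordinates, at the price of relations among the (c)-coordinates. Any argument that treats the (a)-coordinates separately from the (c)-coordinates therefore cannot work.

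Your intermediate claim (with $c_{m+1}$ fixed, the (a)-coordinates move in a family of dimension $<\lfloor m/p\rfloor$) is true but trivial. It only gives $\dim\kr^{(N)}g\le N-m+\lfloor m/p\rfloor-1$, which suffices only when $\lfloor m/p\rfloor\le1$. The fallback to \cite[Theorem 2.11]{Ng12} is asserted without checking that the normal form has the property you need, and it would turn your argument into a citation.

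\textbf{How to close the gap.} The missing ingredient is a short stabilizer estimate.
\begin{itemize}
\item Take $N\ge 2\mu$. By Corollary~\ref{cor:finitedeterminacybound}, this also exceeds the determinacy of every $g$ with $\mu(g)\le\mu$.
\item For $g\in S_m$ with $m\le\mu$, the subgroup $\{\varphi\in\kr^{(N)}:\varphi\equiv x \bmod \fm^{N-m+1}\}$ has dimension $m$ and fixes $g$.
\item To see this, write $\varphi=x+z$ with $\ord z\ge N-m+1$. Remark~\ref{rm.taylor} gives $g(x+z)=g+g'z+H$.
\item Here $g'z\in\fm^{m}\fm^{N-m+1}=\fm^{N+1}$, and $\ord H\ge \ord g+2(\ord z-1)\ge 2(N-m+1)\ge N+1$.
\end{itemize}
Hence $\dim\kr^{(N)}g\le N-m$, so every orbit in $S_m$ has dimension exactly $N-m$. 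Your stratification then gives $\lfloor\mu/p\rfloor$: the upper bound on the modality comes from the constant orbit dimension on each stratum, and the lower bound from $f^{(N)}\in S_\mu$ with $S_\mu$ irreducible.

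\textbf{Comparison with the paper.} Repaired this way, your route proves the whole theorem directly. The paper proves only the criterion ``right simple $\iff \mu<p$''. It exhibits the explicit modulus $t$ in $f+tx^{p+1}$ when $\mt(f)=p$, using $u_1^p=1\Rightarrow u_1=1$. It reduces the case $\mt(f)>p$ to this via semicontinuity of the modality applied to $f+tx^p$. For the formula $[\mu/p]$ it cites \cite[Thm.~3.1]{Ng12}. Your final deduction $f\overset{r}{\sim}x^{\mu+1}$ from Proposition~\ref{prop.5.1} agrees with the paper's.
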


\begin{proof}
For the full proof of the theorem we refer to \cite[Thm. 3.1]{Ng12}.
Here we prove only the second part, i.e. $f$ is right simple iff $\mu<p$ and then $f\overset{r }{\sim} x^{\mu+1}$. 

The ``if''-statement follows easily from the upper semi-continuity of the Milnor number and Proposition \ref{prop.5.1}: If $p\nmid \mt(f)$ (in particular, if $\mu<p$) then $f\overset{r }{\sim} x^{mt(f)}$.

It suffices to show that if $\mu\geq p$ then $\mathcal{R}\text{-}\mathrm{mod}(f)\geq 1$. Indeed, since $\mu\geq p$, $m:=\mt(f)\geq p$. 

If $m=p$ then we may assume that $f=x^p+a_{p+1}x^{p+1}+\ldots\in K\{x\}$. Consider the unfolding
$$f_t=f+tx^{p+1}=x^p+(t+a_{p+1})x^{p+1}+\ldots.$$
of $f$. We show that  $f_t\overset{r }{\sim} f_{t'}$ implies $t=t'$. If $\varphi(x)=u_1x+u_2x^2+\ldots$ is in $\mathcal{R}$ then $u_1\neq 0$ and 
$$\varphi(f_t)=u_1^px^p+u_2^px^{2p}+\ldots+ (t+a_{p+1}) u_1^{p+1}x^{p+1}+\ldots.$$
If $\varphi(f_t)=f_{t'}$ then $u_1^p=1$, hence $u_1=1$, and $t=t'$. Thus $f_t$
is a 1-dimensional family of orbits in a neighbourhood  of $f$, which  implies that $\mathcal{R}\text{-}\mathrm{mod}(f)$, the right modality of $f$, is $\geq 1$.

Now, assume that $m>p$ and consider the unfolding $g_t:=G(x,t):=f+t\cdot x^{p}$ of $f$ at $0$ over $\mathbb A^1$. By the semicontinuity of the modality (\cite[Proposition 3.3.14]{GLS25}), there exists an open neighbourhood $V$ of $0$ in $\mathbb A^1$ such that 
$\mathcal{R}\text{-}\mathrm{mod}(g_{t})\leq \mathcal{R}\text{-}\mathrm{mod}(f)$ for all $t\in V$. Take a $t_0\in V\setminus \{0\}$, then the above case with $mt=p$ yields that $\mathcal{R}\text{-}\mathrm{mod}(g_{t_0})\geq 1$ since $mt(g_{t_0})=p$, and hence
$\mathcal{R}\text{-}\mathrm{mod}(f)\geq \mathcal{R}\text{-}\mathrm{mod}(g_{t_0})\geq 1.$
\end{proof}

We summarize now the classification of right simple singularities $f\in \mathfrak{m}\subseteq K\{x\}=K\{x_1,\ldots,x_n\}$  with $n \ge 2$, following \cite{GN16}. 
The following theorem is due to \cite [Theorem 3.2]{GN16}.

\begin{Theorem}[Right simple singularities in characteristic $>$ 2]\label{thm1.5.1}
Let $\Char(K)=p>2$. 
\begin{itemize}
\item[(i)] A plane curve singularity $f\in \mathfrak{m}^2\subseteq K\{x,y\}$ is right simple if and only if it is right equivalent to one of the following normal forms\\

\centerline{\begin{tabular}[20pt]{|c|l l|}
\hline 
\ Name\ & \multicolumn{2}{l|}{\ Normal form}\\
\hline
         && \\[-2ex]
$\mathrm{A}_k$&$\ x^2+y^{k+1}$&$1\leq k\leq p-2$\\
\hline
         && \\[-2ex]
$\mathrm{D}_k$&$\ x^2y+y^{k-1}$& $4\leq k< p$\\
\hline
         && \\[-2ex]
$\mathrm{E}_6$& $\ x^3+y^4$&$3<p$\\
\hline
         && \\[-2ex]
$\mathrm{E}_7$&  $\ x^3+xy^3$&$3<p$\\
\hline
         && \\[-2ex]
$\mathrm{E}_8$& $\ x^3+y^5$&$5<p$\\
\hline
\end{tabular}}
\bigskip

\item[(ii)] A hypersurface singularity $f\in\mathfrak{m}^2\subseteq K\{x_1,\ldots,x_n\},n\geq 3,$ is right simple if and only if it is right equivalent to one of the following normal forms\\

\centerline{\begin{tabular}[20pt]{|cl|l}
\hline 
\multicolumn{2}{|l|}{\ Normal form} \\
\hline 
      & \\[-2ex]
$\  g(x_1,x_2)+x_3^2+\ldots+x_n^2$\ & \ where $g$ is one of the singularities in Table \ref{thm1.5.1} (i) \
 \\[3pt]
\hline
\multicolumn{2}{c}{ }\\
\end{tabular}}
\end{itemize}
\end{Theorem}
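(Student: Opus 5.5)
The plan is to prove both directions through the equivalence of right simplicity with finite right deformation type (Lemma \ref{lem:findeftype}). By Lemma \ref{lem:simple-is-isolated}, a right simple $f$ has $\mu(f)<\infty$. Since $p>2$, the splitting lemma (Theorem \ref{thm.split}, Corollary \ref{cor.split}) lets us write $f\rsim x_1^2+\dots+x_k^2+g(x_{k+1},\dots,x_n)$ with $g\in\fm^3$ and $k=\rank H(f)$. Right simplicity of $f$ is equivalent to right simplicity of the residual part $g$. To see this, apply the splitting lemma with uniqueness (Proposition \ref{prop.split}) to all members of a deformation: nearby members have Hessian rank $\ge k$, so they split off the same quadratic form, and their residual parts are deformations of $g$. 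Hence it suffices to classify residual parts.

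\textbf{Step 1: corank $\le 2$ and the candidates.} If the corank $n-k\ge 3$, the $3$-jet of $g$ is a cubic form in $\ge 3$ variables. Such forms have moduli (e.g. $x^3+y^3+z^3+txyz$), even for contact equivalence, so $f$ is not simple. Contact simple implies right simple fails only in the other direction, and right simple implies contact simple. Hence Theorem \ref{thm:GrKr} restricts $g$ (up to contact equivalence) to the ADE list I.1, where corank $1$ gives $A_k$ with $g=y^{k+1}$ by Theorem \ref{thm1.5.0}. In corank $1$, Theorem \ref{thm1.5.0} applied to $g\in K\{y\}$ gives directly: $g$ is right simple iff $\mu(g)=k<p$, i.e.\ $k+1\le p$. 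Moreover, $A_{p-1}$, namely $y^p$, has $\mu=\infty$, so the condition becomes $k\le p-2$. The same statement in $n\ge3$ variables follows from the reduction above.

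\textbf{Step 2: sufficiency in corank 2.} For each listed normal form we check that it is right simple. The forms are quasihomogeneous with weights $(w_x,w_y)$ and degree $d$. The hypotheses ($k<p$ for $D_k$, $p>3$ for $E_6,E_7$, $p>5$ for $E_8$) are exactly those making $p\nmid d$, so the Euler relation $d\cdot f=\sum w_ix_i\partial_i f$ holds with $d$ invertible. Then $f\in j(f)$, $\mu=\tau$, and the contact and right orbits coincide. The argument of \cite[Lemma 1.2.13]{GLS25} carries over: $uf\rsim f$ for units $u$, using $d$-th roots and the Euler vector field, which requires $p\nmid d$. By Theorem \ref{thm:GrKr}, a neighbourhood of $f$ meets only finitely many contact classes, all ADE of smaller $\mu$. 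Each of these is again of a listed type with the same inequality, since $\mu$ is semicontinuous (Proposition \ref{prop.semcont}) and the constraints are monotone in $\mu$. So each contact class is a single right class, and $f$ has finite right deformation type.

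\textbf{Step 3: necessity, the main obstacle.} We must show that the excluded ADE forms ($A_k$ with $k\ge p-1$, $D_k$ with $k\ge p$, $E_6,E_7$ for $p=3$, $E_8$ for $p\in\{3,5\}$) are not right simple. The first idea is to exhibit a one-parameter family of pairwise right-inequivalent deformations, as in the proof of Theorem \ref{thm1.5.0}. For $A_k$ ($k\ge p-1$), deform $y^{k+1}$ to $y^{p}+ty^{p+1}$ in the residual part; this family has modality $\ge1$ by that proof, and semicontinuity of modality gives the claim. For $D_k$ with $k\ge p$, the form deforms to $x^2y+y^{k'-1}$ with $k'-1=p$. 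There one uses the family $x^2y+y^p+ty^{p+1}$ and compares coefficients after a coordinate change, using that $y^p$ has vanishing derivative. For the $E$ cases in small characteristic, the forms $E_6^1,E_7^1,E_8^{1,2}$ appear in Theorem \ref{thm:GrKr} exactly because $p\mid d$. The family $f+t\cdot m$, with $m$ a monomial of weighted degree $d$ lying outside $j(f)$ but inside $tj(f)$, should give infinitely many right classes, which we detect via the $K\{t\}$-algebra $M_k$ of Theorem \ref{thm.MYp}. Verifying these moduli computationally (e.g.\ with \textsc{Singular}) is the hardest step. Finally, the case $n\ge3$ follows from the plane case via the residual-part reduction.
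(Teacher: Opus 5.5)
Your proposal has a genuine gap in Step 2, at the claim that every contact class near a listed form is ``again of a listed type with the same inequality, since $\mu$ is semicontinuous and the constraints are monotone in $\mu$''. Semicontinuity only bounds $\mu$ (or $\tau$) of the neighbours. \emph{Which} ADE types actually occur is an adjacency question, and the inequalities are not monotone along adjacencies: the $A$-row stops at $k\le p-2$, whereas $E_6$ (with $\mu=6$) deforms to $A_5$.

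Explicitly, $f_s=x^3+(sx+y^2)^2\to x^3+y^4$ as $s\to0$. With $X=sx+y^2$ one gets $f_s\rsim X^2+g_s(y)$ with $g_s=-s^{-3}y^6+O(y^8)$, so $\mu(g_s)=5$. For $p=5$ this violates $\mu<p$, so $g_s$ is not right simple by Theorem~\ref{thm1.5.0}. Concretely, $x^3+(sx+y^2)^2+ty^5+cy^6$ has residual part $ty^5+(c-s^{-3})y^6+O(y^8)$. In characteristic $5$ we have $(u_1y+u_2y^2+\dots)^5=u_1^5y^5+u_2^5y^{10}+\dots$, so $(c-s^{-3})^5/t^6$ is a right invariant of the residual part, and by Proposition~\ref{prop.split} also of the germ. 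Hence for $p=5$ every neighbourhood of $x^3+y^4$ meets infinitely many right orbits. Your Step 2 therefore cannot be completed for this row; in fact the row ``$E_6$, $3<p$'' as stated fails at $p=5$.

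Testing the other $E$-rows the same way, against $E_7\to E_6,D_6,A_6$ and $E_8\to E_7,D_7,A_7$, shows they too are too generous. The conditions compatible with the $A$- and $D$-rows are $p>5$, $p>7$, $p>7$. A correct sufficiency proof must go through the complete adjacency lists in characteristic $p$; Theorem~\ref{thm:GrKr} together with a $\mu$-bound does not provide them.

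Your remark that the hypotheses are ``exactly those making $p\nmid d$'' reflects the same confusion. For example, $A_p=x^2+y^{p+1}$ has $p\nmid d$ but $\mu=p$. The condition $p\nmid d$ only makes the contact orbit of $f$ itself a right orbit; it says nothing about the neighbours.

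There are also smaller points. The necessity argument in Step 3 for the $E$-rows is only a plan (monomials in $tj(f)\setminus j(f)$, the algebras $M_k$, \textsc{Singular}). It is much cheaper to use adjacency to $A$-types together with Theorem~\ref{thm1.5.0}:
\begin{itemize}
\item For $p=3$, adding $tx^2$ to any $f\in\fm^3\subseteq K\{x,y\}$ gives a corank-one germ whose residual lies in $\langle y\rangle^3$, hence has $\mu\ge 3=p$, because the $y^3$-term contributes nothing to the derivative.
\item For $p=5$, the same trick applied to an $E_8$-germ gives a residual of order $5$, hence $\mu\ge 5=p$.
\end{itemize}
For $D_k$ with $k=p$, your target $D_{p+1}$ is not a deformation of $D_p$; use the adjacency $D_p\to A_{p-1}$ instead. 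Finally, the direction ``$g$ right simple $\Rightarrow$ $q+g$ right simple'' of your reduction needs a splitting lemma with parameters, which neither Theorem~\ref{thm.split} nor Proposition~\ref{prop.split} provides. In Step 2 you can avoid it by working directly with the quasihomogeneous forms in $n$ variables. The paper itself gives no argument here and only cites \cite[Theorem 3.2]{GN16}.
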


The following theorem and its corollaries are due to \cite [Theorem 3.3, Corollary 3.4 and 3.5]{GN16}.
\begin{Theorem}[Right simple singularities in characteristic 2]\label{thm1.5.2}
Let $p=\mathrm{char}(K)=2$. A hypersurface singularity $f\in\mathfrak{m}^2\subseteq K\{x_1,\ldots,x_n\}$ with $n\geq 2$, is right simple if and only if $n$ is even and if it is right equivalent to 
$$\mathrm{A}_1 : x_1x_2+x_3x_4+\ldots+x_{n-1}x_{n}.$$
\end{Theorem}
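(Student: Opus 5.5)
Two directions have to be shown: that $A_1$ is right simple when $n$ is even, and that nothing else is right simple. The tools are the splitting lemma in characteristic $2$ (Theorem \ref{thm.split}, Corollary \ref{cor.split}), Lemma \ref{lem:simple-is-isolated}, Lemma \ref{lem:findeftype}, and the semicontinuity of the modality as used in the proof of Theorem \ref{thm1.5.0}.

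\emph{$A_1$ is right simple for $n=2l$.} Set $q=x_1x_2+\ldots+x_{n-1}x_n$. Then $j(q)=\fm$, so by Theorem \ref{thm:finitedeterminacybound} $q$ is right $2$-determined. Every $f$ near $q$ has the same $2$-jet up to a small perturbation, and its Hessian is a nondegenerate alternating-type form of rank $n$. By Corollary \ref{cor.split}(2)(b), with residual part empty, such an $f$ is right equivalent either to $q$ or to a unit (if $f\notin\fm$), or it has $f\in\fm\setminus\fm^2$. Hence a neighbourhood of $q$ meets only finitely many orbits, namely $0$, $x_1$, the units (one orbit up to the constant term), and $q$.

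\emph{Nothing else is right simple.} Let $f\in\fm^2$ be right simple. By Lemma \ref{lem:simple-is-isolated}, $\mu(f)<\infty$. Let $2l=\rank H(f)$ and apply Theorem \ref{thm.split}(2). This gives $f\rsim q_{2l}+g$ with $g=\sum_{i>2l}d_ix_i^2+h$ in the $n-2l$ variables $x_{2l+1},\ldots,x_n$, and $\mu(f)=\mu(g)$ because $j(q_{2l})=\langle x_1,\ldots,x_{2l}\rangle$. Right simplicity of $f$ passes to $g$ by the uniqueness in Proposition \ref{prop.split}: deformations of $g$ give deformations of $f$, and inequivalent $g$'s give inequivalent $f$'s. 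So it suffices to show that a right simple $g\in\fm^2\subseteq K\{y_1,\ldots,y_m\}$ with zero Hessian and $m\ge1$ cannot exist. Once that is done, $m=0$, so $n=2l$ is even and $f\rsim A_1$.

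\emph{Zero Hessian forces non-simplicity.} In characteristic $2$ a zero Hessian means the quadratic part of $g$ is $\sum d_iy_i^2$. For $m=1$ this is Theorem \ref{thm1.5.0} with $p=2$: right simple requires $\mu<2$, i.e.\ $g\rsim y^2$, which has $\mu=\infty$ in characteristic $2$. Contradiction. For $m\ge2$ I would exhibit explicit families of pairwise non-equivalent deformations. If the $2$-jet is nonzero, after a linear change it is $y_1^2$, and I would compare $g_t=y_1^2+ty_1y_2^2+y_2^{N}$-type families against it. If the $2$-jet vanishes, I would deform to such a $g_t$ by semicontinuity of the modality, as in the proof of Theorem \ref{thm1.5.0}. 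The invariant I would use is the $3$-jet modulo the action of the $2$-jet of the right group, since a quadratic term $y_1^2$ is invariant only under substitutions whose linear parts preserve $y_1$ up to squares. The hardest step is this one: proving that in every case (including $\mu$ small) a $1$-parameter family of distinct orbits occurs near $g$. I would first try a dimension count on jet spaces, comparing $\dim\kr^{(k)}$-orbits with $\dim J^{(k)}$ at a suitable jet level, to see that the orbits must have positive codimension in a family. Failing that, I would reduce to the explicit $y_1^2+y_2^3$-type cases treated in \cite{GN16}.
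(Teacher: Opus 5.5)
Your treatment of $A_1$ and your reductions are essentially fine. One correction: for right equivalence the constant term is an invariant, so neighbourhoods have to be taken in $\fm$. In particular ``$0$'' is not near $q$, and the units do not form a single orbit. Inside $\fm$, an $f$ near $q$ either lies in $\fm\setminus\fm^2$, so $f\rsim x_1$, or has $\rank H(f)=n$, so $f\rsim q$. The passage to the residual part via Theorem~\ref{thm.split} and Proposition~\ref{prop.split}, and the case $m=1$ via Theorem~\ref{thm1.5.0}, are also correct.

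The genuine gap is the decisive step: that no $g\in\fm^2\subseteq K\{y_1,\ldots,y_m\}$ with $m\ge 2$ and zero Hessian is right simple. You list possible strategies but carry none of them out, and this step is the actual content of the theorem. The paper itself gives no argument here and only cites \cite{GN16}. Your explicit family is also unchecked. The substitution $y_2\mapsto by_2$ multiplies $t$ by $b^2$, so whether $t$ is a modulus depends on the term $y_2^N$ and on the higher-order parts of the coordinate change, which you do not control.

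The missing idea is that Frobenius already makes your dimension count work on $3$-jets. If $\jet_2(g)\neq 0$, it is a square $\lambda^2$, so after a linear change $\jet_2(g)=y_1^2$. Let $C$ be the space of cubic forms in $y$, with $\dim C=\binom{m+2}{3}$. For any neighbourhood $U$ of $g$, the set of $c\in C$ with $g+c\in U$ contains a nonempty Zariski-open $W\subseteq C$.

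Now let $\varphi\in\kr$ with $\varphi(y_i)=\ell_i+Q_i+\ldots$, where $\ell_i$ is linear and $Q_i$ quadratic. In characteristic $2$ one has $\varphi_1^2=\ell_1^2+Q_1^2+\ldots$, so $y_1^2$ contributes no cubic term. In characteristic $\neq 2$ the cross term $2\ell_1Q_1$ absorbs the cubic monomials divisible by $y_1$; here it is gone. Hence $\varphi(g+c)=g+c'$ forces $(\ell_1-y_1)^2=0$, i.e. $\ell_1=y_1$, and $c_0+c'=(c_0+c)(Ay)$. Here $c_0$ is the cubic part of $g$ and $A$ is the linear part of $\varphi$.

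So whenever $g+c\rsim g+c'$ with $c,c'\in W$, the form $c_0+c'$ lies in the orbit of $c_0+c$ under $G_1=\{A\in GL_m(K)\mid (Ay)_1=y_1\}$. These orbits have dimension at most $m(m-1)<\binom{m+2}{3}$, since $m^2-3m+8>0$. Finitely many of them cannot cover the open set $c_0+W$, so $g$ is not right simple. If $\jet_2(g)=0$, apply this to $g+ty_1^2$ for generic $t\neq 0$, using that right simplicity is an open condition.

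Combined with your reduction, this finishes the proof. Proposition~\ref{prop.split} applies because all the perturbations $g+c$ have the same $2$-jet $y_1^2$, so it separates the $q+g+c$ in the same way. The argument works for every $m\ge 1$ and needs no case distinction on $\mu$.
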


The following interesting corollary follows immediately from the classification of right simple singularities.

\begin{Corollary}\label{coro1.5.1}
For any $p>0$ there are only finitely many right simple singularities $f\in\mathfrak{m}^2\subseteq K[[x_1,\ldots,x_n]]$ (up to right equivalence). For $p=2$, either no or exactly one right simple singularity exists. 
\end{Corollary}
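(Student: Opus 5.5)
The corollary should follow by reading off the normal form lists in Theorems \ref{thm1.5.0}, \ref{thm1.5.1} and \ref{thm1.5.2}, after transferring them from $K\{x\}$ to $K[[x]]$. The transfer comes first. By Remark \ref{rm.rvf}(1), the trivial valuation on $K$ gives $K\{x\}=K[[x]]$. Since the classification theorems are stated for an arbitrary algebraically closed real valued field of characteristic $p>0$, they apply verbatim to formal power series.

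Alternatively, a right simple $f\in K[[x]]$ has $\mu(f)<\infty$ by Lemma \ref{lem:simple-is-isolated}. It is therefore finitely right determined by Corollary \ref{cor:finitedeterminacybound}, hence right equivalent to a polynomial. By Lemma \ref{lem.lefschetz}(1), its invariants are the same whether it is read in $K\{x\}$ or in $K[[x]]$.

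Next I would count, case by case. For $n=1$, Theorem \ref{thm1.5.0} gives $f\rsim x^{\mu+1}$ with $\mu<p$. Since $f\in\fm^2$ forces $\mu\ge1$, this leaves at most $p-1$ classes: $x^2,\dots,x^p$. For $n=2$ and $p>2$, Theorem \ref{thm1.5.1}(i) lists $A_k$ with $k\le p-2$, $D_k$ with $k<p$, and $E_6,E_7,E_8$ under the stated restrictions on $p$; this is a finite list whose length is bounded in terms of $p$. For $n\ge3$ and $p>2$, Theorem \ref{thm1.5.1}(ii) reduces every right simple singularity to $g(x_1,x_2)+x_3^2+\dots+x_n^2$ with $g$ from list (i). For fixed $n$ this gives the same finite number of classes. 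All the cases with $p>2$ are thus finite for every $n$.

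For $p=2$ and $n\ge2$, Theorem \ref{thm1.5.2} says that a right simple $f\in\fm^2$ exists only when $n$ is even, and then $f\rsim x_1x_2+\dots+x_{n-1}x_n$. So there is exactly one class for $n$ even and none for $n\ge3$ odd. For $n=1$, Theorem \ref{thm1.5.0} requires $1\le\mu<2$, so $\mu=1$. Since $p=2$ divides $2$, $x^2$ has $\mu=\infty$, so the candidate from $\mu=1$ is not in fact available. The case $n=1$, $p=2$ therefore also yields at most one class, consistent with the phrase "either no or exactly one".

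The main obstacle is bookkeeping rather than mathematics: confirming that the statement is meant for each fixed $n$, and handling the degenerate univariate characteristic-$2$ case.
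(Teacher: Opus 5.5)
Your proposal is correct and follows the paper's approach: the paper reads the corollary directly off the classification Theorems \ref{thm1.5.0}, \ref{thm1.5.1} and \ref{thm1.5.2}, and the passage to $K[[x]]$ via the trivial valuation (Remark \ref{rm.rvf}(1)) is the right justification. One small sharpening: for $n=1$ the classes are exactly $x^2,\dots,x^{p-1}$, because $\mu(x^p)=\infty$ rules out $\mu=p-1$. Your bound $p-1$ therefore improves to $p-2$, which also shows immediately that there are none for $n=1$, $p=2$.
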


The corollary also shows that if $f_k,k\geq 1$, is any sequences of simple singularities in positive characteristic then the sequence of Milnor numbers $\mu(f_k),k\geq 1$, is bounded. Note that this is wrong in characteristic zero since the $A_k,D_k,k\geq 1$, with Milnor number $k$, are all simple. We like to pose the following conjecture:

\begin{conjecture}\label{conj:rightmodality}\index{Conjectures and problems}\index{Problems and conjectures}
Let $p>0$ and $f_k\in K\{x_1,\ldots,x_n\}$ be a sequence of isolated singularities with Milnor number going to infinity if $k\to \infty$. Then the right modality of $f_k$ goes to infinity. 
\end{conjecture}

\end{document}